\title{\LARGE{\bfseries{Homogenization of degenerate coupled transport processes in porous media with memory terms}}}
\date{}
\author{
\large{
Michal Bene\v s\footnote{Department of Mathematics,
Faculty of Civil Engineering, Czech Technical University in Prague,
Th\'{a}kurova 7, 166 29 Prague 6, Czech Republic,
E-mail: michal.benes@cvut.cz}\;
and
Igor Pa\v{z}anin\footnote{Department of Mathematics, Faculty of Science, University of Zagreb,
Bijeni\v{c}ka 30, 10000 Zagreb, Croatia,
E-mail: pazanin@math.hr}
}
}
\newtheorem{theorem}{Theorem}[section]
\newtheorem{definition}[theorem]{Definition}
\newtheorem{lemma}[theorem]{Lemma}
\newtheorem{remark}[theorem]{Remark}
\newenvironment{proof}[1][Proof.]{\begin{trivlist}
\item[\hskip \labelsep {\bfseries #1}]}{\end{trivlist}}
 \DeclareMathOperator{\ess}{ess}
\newcommand{\bfn}{\mbox{\boldmath{$n$}}}
\newcommand{\bfe}{\mbox{\boldmath{$e$}}}
\newcommand{\bfg}{\mbox{\boldmath{$g$}}}
\newcommand{\bfq}{\mbox{\boldmath{$q$}}}
\newcommand{\bfu}{\mbox{\boldmath{$u$}}}
\newcommand{\bfw}{\mbox{\boldmath{$w$}}}
\newcommand{\bfv}{\mbox{\boldmath{$v$}}}
\newcommand{\bfJ}{\mbox{\boldmath{$J$}}}
\newcommand{\bfI}{\mbox{\boldmath{$I$}}}
\newcommand{\bfA}{\mbox{\boldmath{$A$}}}
\newcommand{\bfLambda}{\mbox{\boldmath{$\Lambda$}}}
\newcounter{constants}
\begin{document}

\maketitle

\begin{abstract}
In this paper
we establish a homogenization result for a doubly nonlinear parabolic system
arising from the hygro-thermo-chemical processes in porous media taking into account
memory phenomena. We present a meso-scale model of the composite (heterogeneous) material
where each component
is considered as a porous system and the voids
of the skeleton are partially saturated with liquid water.
It is shown that the solution of the meso-scale problem
is two-scale convergent to that of the upscaled problem as the spatial
parameter goes to zero.
\end{abstract}

\section{Introduction}

Mathematical modeling of coupled transport processes in
heterogeneous porous media is of great interest in engineering
practice (civil engineering, environmental engineering, nuclear engineering applications).
Coupled heat and moisture flow through multiphase porous material
is associated with systems of strongly coupled nonlinear PDEs of the form \cite{bear,Pinder}
\begin{align}
\partial_t\varrho_{\alpha}
+
\nabla\cdot  \bfJ_{\alpha}
& =
{s}_{\alpha}
&&
\textmd{(conservation of mass)},
\label{eq:par1}
\\
\partial_t  e_{\alpha}
+
\nabla \cdot  \bfq_{\alpha}
& =
\mathcal{Q}_{\alpha}
+
\mathcal{E}_{\alpha}
-
H_{\alpha} {s}_{\alpha}
&&
\textmd{(conservation of energy)}.
\label{eq:par2}
\end{align}
Here,
$\varrho_{\alpha}$ represents the averaged mass density of the $\alpha$-phase (e.g. solid,
 liquid water, oil,  gas, etc.),
$\bfJ_{\alpha}$ is the mass flux and  ${s}_{\alpha}$ stands for a production term.
Further,
$e_{\alpha}$ is the total internal
energy of the $\alpha$-phase,
$\bfq_{\alpha}$ is the heat flux,
$\mathcal{Q}_{\alpha}$ stands for the volumetric heat source,
$\mathcal{E}_{\alpha}$ represents the term
expressing energy exchange with the other phases
and the symbol $H_{\alpha}$  stands for the specific enthalpy
of the $\alpha$-phase.

Because of the nonlinear structure, complexity and multi-scale nature of the problem \eqref{eq:par1}--\eqref{eq:par2},
this system must be solved numerically using suitable computational methods.
Nevertheless, the complexity of the microscopic structure of heterogeneous multiphase materials
makes detailed numerical simulations very expensive.
Therefore, the effective (homogenized) material coefficients are used instead
of taking into account properties of individual phases.
The composition of the heterogeneous material may vary substantially and
the experimental determination of effective properties is
rather complicated or expensive due to lack of enough accurate data.
Therefore,
recently there has been an explosive growth of interest into the development
of mathematical homogenization methods
to derive effective material properties of heterogeneous media directly from their microstructure.
The present paper is devoted to the periodic homogenization of
a system of degenerate partial differential equations
with memory phenomena using the two-scale homogenization technique.
The model covers a large range of problems, in particular,
modelling  of hygro-thermo-chemical processes in concrete at early ages taking into account
hydration phenomena. In the meso-scale analysis below,
fresh concrete is treated as a composite
material where each component (cement paste, aggregates)
is considered as a porous medium where the voids
of the skeleton (both, cement paste and aggregates) are partially saturated with liquid water.

The paper is organized as follows.
In Section~\ref{sec:meso_scale_model},
we describe the meso-scale model for transport processes in early age concrete
taking into account hydration phenomena
and review homogenization results for degenerate parabolic problems.
In Section~\ref{sec:preliminaries},
we introduce some notation and describe various function spaces
and precisely specify our assumptions on data and coefficient functions
under which the main result of the paper is proven.
In Section~\ref{sec:main_result},
we provide the weak formulation of the meso-scale problem,
present the existence theorem
(for the proof see Appendix~\ref{proof_1} and \cite{BenesPazanin2018})
and state the main result of the paper, see Theorem~\ref{thm:main_result}.
The main result is proven in Section~\ref{sec:proof_main}.
We first establish a priori estimates for solutions of the meso-scale problem,
uniform with respect to $\varepsilon$,
see Subsection~\ref{subsec:a_priori_estimates}.
Having obtained a priori estimates,
in Subsection~\ref{subsec:passage_limit},
we collect the definition and basic results for the weak two-scale convergence and
pass to the homogenization limit  $\varepsilon \rightarrow 0$,
where
$\varepsilon$ is the characteristic length representing the small scale variability of concrete.
Transport coefficients and material properties depend no longer of material heterogeneities.
Then, the proof of the main result is completed in Subsection~\ref{subsec:homogenized_model}
eliminating the microscopic variable from the upscaled system and
decoupling the cell problems from the two-scale homogenized problem.

\section{The heterogeneous meso-scale model}
\label{sec:meso_scale_model}
Let $\Omega$ be a bounded domain in $\mathbb{R}^2$
with Lipschitz boundary $\partial \Omega$.
$\bfn$~denotes the outer unit normal vector to $\partial\Omega$.
Let $T \in (0,\infty)$ be fixed throughout the paper,
$\Omega_T := \Omega\times (0,T)$ and $\partial\Omega_T := \partial\Omega\times (0,T)$.
Let
$\mathcal{Y}=(0,1)^2$ be a periodicity cell. 
We consider a porous structure consisting of two distinct flow regions
periodically distributed in a domain $\Omega$
with period $\varepsilon \mathcal{Y}$.
Let $\mathcal{Y}$ be split on two complementary parts
$\mathcal{Y}_{a}$ and $\mathcal{Y}_{c}$. We denote by $\chi_{a}(y)$
and  $\chi_{c}(y)$ the corresponding characteristic functions  of
$\mathcal{Y}_{a}$ and $\mathcal{Y}_{c}$, respectively,
extended $\mathcal{Y}$-periodically to all of $\mathbb{R} \times \mathbb{R}$.
Hence, the domain $\Omega$ is decomposed into two sub-domains,
$\Omega_{a}^{\varepsilon}$ (aggregates) and $\Omega_{c}^{\varepsilon}$ (cement paste),
which are defined as
$$
\Omega_{a}^{\varepsilon}
\equiv
\left\{
x \in \Omega;
\quad
\chi_{a}({x/\varepsilon}) = 1
\right\}
\qquad  \textmd{ and }  \qquad
\Omega_{c}^{\varepsilon}
\equiv
\left\{
x \in \Omega;
\quad
\chi_{c}({x/\varepsilon}) = 1
\right\}.
$$
The characteristic functions $\chi_{a}$ and $\chi_{c}$
are used as multipliers to denote the zero-extension of various
functions.
From the geometrical point of view,
$\varepsilon$ is the characteristic length representing the small scale variability of concrete,
see Figure~\ref{figure_periodic_medium}.

\bigskip

\begin{figure}[h]
\centering 
\includegraphics[width=10cm]{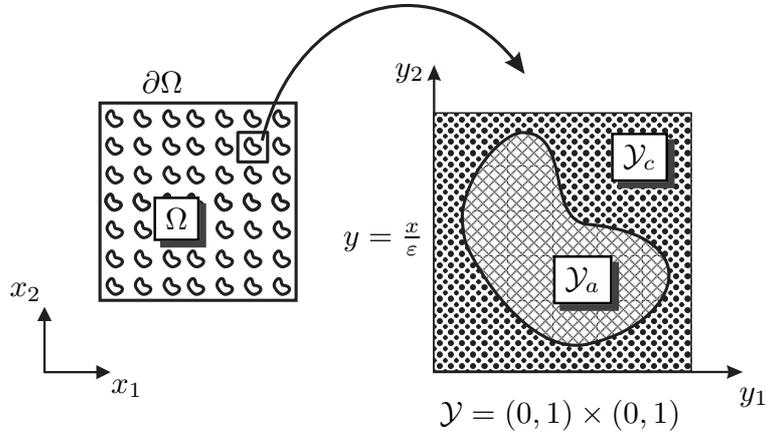}
\caption{2D example of periodic medium.}
\label{figure_periodic_medium}
\end{figure}

In this work we assume the particular form of \eqref{eq:par1}--\eqref{eq:par2}
for the two-phase (solid skeleton and liquid water)
heterogeneous porous system (cement paste and aggregates)
and study the homogenization
of the system of partial differential equations in $\Omega_{T}$
indexed by the scale parameter $\varepsilon$, namely
\begin{eqnarray}
\partial_t    {b}({x/\varepsilon},{p}^{\varepsilon},{r}^{\varepsilon})
-
\nabla\cdot
\left[
{a}({x/\varepsilon},{p}^{\varepsilon},\vartheta^{\varepsilon},{r}^{\varepsilon})
\nabla {p}^{\varepsilon}
\right]
&=&
\alpha_1  \chi_{c}({x/\varepsilon})  f({p}^{\varepsilon},\vartheta^{\varepsilon},{r}^{\varepsilon})
,
\label{eq1a}
\\
\partial_t
\left[
c_{w}
{b}({x/\varepsilon},{p}^{\varepsilon},{r}^{\varepsilon})
\vartheta^{\varepsilon}
+
\sigma({x/\varepsilon},{r}^{\varepsilon}) \vartheta^{\varepsilon}
\right]
\nonumber
\\
-
\nabla\cdot
\left[
{\lambda}({x/\varepsilon},{p}^{\varepsilon},\vartheta^{\varepsilon},{r}^{\varepsilon})
\nabla \vartheta^{\varepsilon}
+
c_{w}
\vartheta^{\varepsilon}
{a}({x/\varepsilon},{p}^{\varepsilon},\vartheta^{\varepsilon},{r}^{\varepsilon})
\nabla {p}^{\varepsilon}
\right]
&=&
\alpha_2  \chi_{c}({x/\varepsilon})   f({p}^{\varepsilon},\vartheta^{\varepsilon},{r}^{\varepsilon})
,
\label{eq1b}
\end{eqnarray}
coupled with an integral condition
\begin{equation}\label{eq1c}
{r}^{\varepsilon}(x,t)
=
\int_0^t
f({p}^{\varepsilon}(x,s),\vartheta^{\varepsilon}(x,s),{r}^{\varepsilon}(x,s))
\,
{\rm d}s
\end{equation}
and boundary and initial conditions
\begin{align}
-{a}({x/\varepsilon},{p}^{\varepsilon},\vartheta^{\varepsilon},{r}^{\varepsilon})
\nabla {p}^{\varepsilon}
\cdot
\bfn
&=
\beta_{e}
(
{p}^{\varepsilon}
-
{p}_{\infty}
)
\quad
&&
{\rm in} \; \partial\Omega_{T},
\label{eq1e}
\\
-
{\lambda}({x/\varepsilon},{p}^{\varepsilon},\vartheta^{\varepsilon},{r}^{\varepsilon})
\nabla \vartheta^{\varepsilon} \cdot \bfn
&=
\alpha_{e} (\vartheta^{\varepsilon} - \vartheta_{\infty})
\quad
&&
{\rm in} \; \partial\Omega_{T},
\label{eq1d}
\\
{p}^{\varepsilon}(0)
&=
{p}_{0}
\quad
&&
{\rm in} \;  \Omega,
\label{eq1f}
\\
\vartheta^{\varepsilon}(0)
&=
\vartheta_0
\quad
&&
{\rm in} \;  \Omega.
\label{eq1g}
\end{align}
In the system of equations above,
to shorten mathematical formulations, we have used the following simplified
notations:
\begin{eqnarray}
&&
{b}^{\varepsilon}({x},{p}^{\varepsilon},{r}^{\varepsilon})
=
{b}({x/\varepsilon},{p}^{\varepsilon},{r}^{\varepsilon})
:=
\varrho_{w}
\left[
\chi_{c}({x/\varepsilon})
\phi_{c}({r}^{\varepsilon})
+
\chi_{a}({x/\varepsilon})
\phi_{a}
\right]
\mathcal{S}({p}^{\varepsilon}),
\label{notation_form_b}
\\
&&
\sigma^{\varepsilon}({x},{r}^{\varepsilon})
=
\sigma({x/\varepsilon},{r}^{\varepsilon})
:=
\chi_{c}({x/\varepsilon})
\varrho_{sc} {c}_{sc}(1 - \phi_{c}({r}^{\varepsilon}) )
+
\chi_{a}({x/\varepsilon})
\varrho_{sa} {c}_{sa}(1 - \phi_{a} ),
\label{notation_form_sigma}
\\
&&
{a}^{\varepsilon}({x},{p}^{\varepsilon},\vartheta^{\varepsilon},{r}^{\varepsilon})
=
{a}({x/\varepsilon},{p}^{\varepsilon},\vartheta^{\varepsilon},{r}^{\varepsilon})
:=
\varrho_{w}
\frac{{k}_{R}(\mathcal{S}({p}^{\varepsilon}))}{\mu(\vartheta^{\varepsilon})}
\left[
\chi_{c}({x/\varepsilon})
{k}_{c}({r}^{\varepsilon})
+
\chi_{a}({x/\varepsilon})
{k}_{a}
\right],
\label{vector_flux_water}
\\
&&
{\lambda}^{\varepsilon}({x},{p}^{\varepsilon},\vartheta^{\varepsilon},{r}^{\varepsilon})
=
{\lambda}({x/\varepsilon},{p}^{\varepsilon},\vartheta^{\varepsilon},{r}^{\varepsilon})
:=
\chi_{c}({x/\varepsilon})
\lambda_{c}({p}^{\varepsilon},\vartheta^{\varepsilon},{r}^{\varepsilon})
+
\chi_{a}({x/\varepsilon})
\lambda_{a}({p}^{\varepsilon},\vartheta^{\varepsilon}).
\label{vector_flux_heat}
\end{eqnarray}
From the physical point of view,
equations \eqref{eq1a} and \eqref{eq1b}, respectively, represent the mass balance of moisture (liquid water)
and the heat equation for the porous system.
The equation \eqref{eq1c} represents an integral condition with an additional unknown \emph{memory function}
${r}^{\varepsilon} : {\Omega_T}\rightarrow \mathbb{R}$.
Such a type of equations arises in the theory of heat conduction
when inner heat sources are of special types,
in particular, in so-called problems of hydratational heat during cement hydration.
In this case, the intensity of inner sources
of heat in cement paste (and corresponding sinks of liquid water)
depends also on the amount of heat already developed.
Typical example of problem \eqref{eq1a}--\eqref{eq1c}, modeling heat and moisture transport
in early age concrete, is given in~\cite{BenesPazanin2018}.

Now, let us describe the notation in \eqref{eq1a}--\eqref{vector_flux_heat} in more details.
Next to the memory function ${r}^{\varepsilon}$,
${p}^{\varepsilon} : {\Omega_T}\rightarrow \mathbb{R}$
and
$\vartheta^{\varepsilon} : {\Omega_T}\rightarrow \mathbb{R}$
denote the unknown water pressure and temperature of the porous system, respectively.
In the model, we assume different hydraulic and thermal characteristics in $\Omega_{a}$
and $\Omega_{c}$, respectively.
In particular,
$\phi_{\alpha}$, $\alpha \approx a,c$, is the porosity
and
$\mathcal{S}$ represents the degree of saturation with liquid water.
Further,
${k}_{\alpha}$ is the intrinsic permeability of the fluid,
${k}_{R}$ represents the relative hydraulic conductivity,
$\lambda_{\alpha}$ is the thermal conductivity function
and
$\mu$ is the temperature dependent kinematic viscosity of the fluid.
Material constants are as follows:
$\varrho_w$ is the density of liquid water and
${c}_w$ represents the isobaric heat capacity of water.
Moreover, $\varrho_{s\alpha}$ and ${c}_{s\alpha}$,
respectively, are the mass density and the isobaric heat capacity of the solid.
$\alpha_{e}$ designates the film coefficient for the heat transfer,
$\beta_{e}$ represents the surface emissivity of water,
$\vartheta_{\infty}$ is the temperature of the environment
and
${p}_{\infty}$ is a fictitious water pressure related to the ambient conditions
(the relative humidity, gas pressure and temperature).
Finally,
${p}_{0} : \Omega \rightarrow \mathbb{R}$
and ƒ
$\vartheta_0 : \Omega\rightarrow \mathbb{R}$
are the initial distributions of the water pressure and temperature.
Recall that the characteristic functions ${\chi}_{a}$ and ${\chi}_{c}$
are $\mathcal{Y}$-periodic functions on $\mathbb{R}^2$.
As the spatial parameter $\varepsilon$ gets smaller,
the characteristic functions ${\chi}_{a}$ and ${\chi}_{c}$
oscillate more rapidly.
We solve the periodic homogenization problem and investigate the behavior of the solution
in the limit (as $\varepsilon \rightarrow 0$).

The system \eqref{eq1a}--\eqref{eq1b}
can be written in terms of operators ${A}$, ${B}$, ${F}$
in general form
\begin{equation}\label{eq:par3}
\partial_t {B}(x/\varepsilon,\bfu)
-
\nabla\cdot {A}(x/\varepsilon,\bfu,\nabla \bfu)
=
{F}(x/\varepsilon,\bfu),
\end{equation}
where $\bfu$ is the unknown vector of state variables.
The existence, uniqueness and regularity of the solution to the equation \eqref{eq:par3}
have been studied (assuming the operator $B$ in the parabolic
part to be subgradient) by several authors, see e.g. \cite{AltLuckhaus1983,FiloKacur1995,jungel2000,otto}.
A theoretical analysis of special type of \eqref{eq:par3} was also performed in
\cite{BenesKrupicka2015} and \cite{BenesKrupicka2016}
in the case of degenerate coupled flows in geomaterials and
\cite{BenesZeman} in the particular non-degenerate case with applications to nonlinear heat and
moisture transport in multi-layer porous structures.

Homogenization of \eqref{eq:par3} has been studied e.g. in \cite{cao2013,cao2014,jian2000,Nandakumaran2001}.
In \cite{cao2013}, the authors studied the homogenization of the
scalar problem assuming ${B}(y,\bfu)=u$, however, including
more general nonlinear elliptic operator of the form
${A}(y,\bfu,\nabla \bfu) = {D}(y,u,\nabla u) + {K}(y,u)$.
The operator ${D}(y,u,\nabla u)$ was assumed to be periodic in $y={x/\varepsilon}$
and degenerate in $\nabla u$.
In \cite{cao2014}, the authors studied
the homogenization of multi-scale degenerate problem
(arising in the motion of saturated-unsaturated water flow in porous media)
like \eqref{eq:par3} with
$
{A}(y,\bfu,\nabla \bfu)
=
-
\left[
a({x/\varepsilon})
\nabla u^{\varepsilon}
+
\bfg({x/\varepsilon},u^{\varepsilon})\right]
$
and
${B}(x/\varepsilon,\bfu) = {B}(u)$.
The authors derived the homogenized equation and present
results on the first order corrector.
In \cite{Nandakumaran2001}, the authors studied
the homogenization of nonlinear parabolic equations like
\eqref{eq1a} with mixed boundary conditions under
restrictive assumptions on $B$ and with non-degenerate and monotone
elliptic part.
In \cite{jian2000}, the author studied the asymptotic behaviour of the problem
with ${B}(y,\bfu)=|u|^{\nu} {\rm sign}\, u$, $\nu>0$, including
fast or slow diffusion through a nonhomogeneous medium.
Homogenization of a coupled system of diffusion-convection
equations in a domain with periodic microstructure, modeling the flow
of isothermal immiscible compressible fluids through porous media,
was theoretically studied e.g. in \cite{amaziane2010a, amaziane2013a}.



In the present paper,
our aim is to extend our previous result from \cite{BenesPazanin2017b},
where we established a homogenization result for a fully nonlinear
degenerate parabolic system (with mixed homogeneous Dirichlet-Neumann boundary conditions)
arising  from the heat and moisture flow through partially saturated porous media.
In particular,
following  \cite{AltLuckhaus1983},
the degeneracy in the elliptic part of the mass balance equation (liquid water)
has been transformed to the parabolic term
using the so called Kirchhoff transformation
\begin{displaymath}%
v := \kappa(p) = \int\limits_{0}^{p} k_{R}(\mathcal{S}(s))
{\rm d}s.
\end{displaymath}
However, due to the degeneracy of the problem
($k_{R}$ is assumed to be bounded below by zero)
we are not able to ensure that the inverse function $p=\kappa^{-1}(v)$ solves the original problem.
Therefore we consider directly the original problem  \eqref{eq1a}--\eqref{eq1b}
(in terms of the pressure and temperature)
and omit degeneracies using $L^{\infty}$-estimates for the solution.
Moreover, in the present paper, we assume physically more relevant Newton type boundary conditions
and incorporate memory function in source terms of the governing equations.

\section{Preliminaries}\label{sec:preliminaries}
\subsection{Some functions spaces}
Throughout the paper, we will always use positive constants $C$,
$c$, $c_1$, $c_2$, $\dots$, which are not specified and which may
differ from line to line.
${\bfI_d}$ denotes the identity matrix of size $2\times 2$.
We suppose
$p,q,p'\in [1,\infty]$, $p'$ denotes the conjugate exponent to $p$,
$p>1$, ${1}/{p} + {1}/{p'} = 1$.
$L^p(\Omega)$ denotes the usual
Lebesgue space equipped with the norm $\|\cdot\|_{L^p(\Omega)}$ and
$W^{k,p}(\Omega)$, $k\geq 0$ ($k$ need not to be an integer, see
\cite{KufFucJoh1977}), denotes the usual
Sobolev-Slobodecki space with the norm $\|\cdot\|_{W^{k,p}(\Omega)}$.
Recall that
$W^{0,p}(\Omega)=L^p(\Omega)$.
By  ${X}'$ we denote the space of all continuous, linear forms on Banach space ${X}$
and by $\langle \cdot,\cdot \rangle$ we denote the
duality between ${X}$ and ${X}'$. As usual, $(\cdot,\cdot)$
stands for the inner product on $L^2(\Omega)$.
By $L^p(0,T;X)$ we denote the usual Bochner space (see \cite{AdamsFournier1992}).
%
%
Let $W^{1,2}_{per}(\mathcal{\mathcal{Y}})$ be the space of elements of
$W^{1,2}(\mathcal{\mathcal{Y}})$ having the same trace on opposite face of $\mathcal{\mathcal{Y}}$.
$L^{p}_{per}(\mathcal{\mathcal{Y}})$ is the subspace of $L^p(\mathcal{Y})$
of $\mathcal{Y}$-periodic functions $\varphi$, i.e. $\varphi(x+k\bfe_i)=\varphi(x)$
a.e. on $\mathbb{R}^2$, for all $k \in \mathbb{Z}$ and $i\in \left\{1,2\right\}$,
where $\left\{\bfe_1,\bfe_2 \right\}$ is the canonical basis of $\mathbb{R}^2$.
By $C_{per}({\mathcal{Y}})$ we denote the subspace of $C(\mathbb{R}^{2})$ of $\mathcal{Y}$-periodic functions.
By $C^{\infty}_{per}({\mathcal{Y}})$ we denote the subspace of
$C^{\infty}(\overline{\mathcal{Y}})$ of $\mathcal{Y}$-periodic functions.
$L^{p}(\Omega;X)$ stands for the set of measurable functions $\varphi: x \in \Omega \rightarrow
\varphi(x) \in X$ such that $\|\varphi(x)\|_{X} \in L^p(\Omega)$.
$L^{2}_{per}(\mathcal{Y};C(\overline{\Omega}))$ represents the space of measurable functions
$\varphi: y \in \mathcal{Y} \rightarrow
\varphi(y) \in C(\overline{\Omega})$ such that $\|\varphi(y)\|_{C(\overline{\Omega})} \in L^2_{per}(\mathcal{Y})$.

\subsection{Structure and data properties}

Before we state the main result of our paper presented in the next section,
let us introduce the assumptions on coefficient functions in
\eqref{eq1a}--\eqref{vector_flux_heat}:

\begin{itemize}

\item[(i)]
$\mathcal{S} \in C^{1}(\mathbb{R})$ is a positive and monotone function such that
\begin{equation}
0 < \mathcal{S}(\xi) \leq {C}_{s} < +\infty,
\quad
 0 < \mathcal{S}'(\xi)  \leq {S}_{L}
 \qquad
\forall \xi \in \mathbb{R} \quad ({C}_{s}, {S}_{L} = {\rm const}).
\label{con11a}
\end{equation}

\item[(ii)]
${k}_{c}$, ${k}_{R}$, $\mu$, $\lambda_{a}$ and $\lambda_{c}$ are continuous functions
satisfying
\begin{align}
&
0 < k_1 \leq {k}_{c}(\xi) \leq k_2 < +\infty \quad (k_1,k_2 = {\rm const})
&&
\forall \xi \in \mathbb{R},
\label{cond:intr_perm}
\\
&
k_{R} \in C([0,{C}_{s}]), \;
\left(k_{R}(\xi_1)-k_{R}(\xi_2)\right)(\xi_1 - \xi_2) > 0
&&
\forall \xi_1,\xi_2 \in [0,{C}_{s}], \; \xi_1 \neq \xi_2,
\label{cond:rel_perm_I}
\\
&
0 <  k_{R}(\xi)
&&
\forall \xi \in [0,{C}_{s}],
\label{cond:rel_perm_II}
\\
&
0 <  \mu_1 \leq  \mu(\xi) \leq  \mu_2 < +\infty \quad ( \mu_1, \mu_2 = {\rm const})
&&
\forall \xi \in \mathbb{R},
\label{cond:viscosity}
\\
&
0  <  \lambda_{1} < \lambda_{c}(\xi_1,\xi_2,\xi_3)  <  \lambda_{2} < +\infty
 \quad (\lambda_{1},\lambda_{2} = {\rm const})
&&
\forall \xi_1,\xi_2,\xi_3 \in \mathbb{R},
\label{con12c}
\\
&
0  <  \lambda_{1} < \lambda_{a}(\xi_1,\xi_2)  <  \lambda_{2} < +\infty
 \quad (\lambda_{1},\lambda_{2} = {\rm const})
&&
\forall \xi_1,\xi_2 \in \mathbb{R}.
\label{con12d}
\end{align}

\item[(iii)] The function $\phi_{c}$ is Lipschitz continuous, i.e.
there exists a constant $C_{\phi}>0$ such that
\begin{align}\label{lipschitz_phi}
| \phi_{c}(\xi_1) - \phi_{c}(\xi_2) |
&
\leq
C_{\phi}
|\xi_1 - \xi_2|
&&
\forall \xi_1,\xi_2 \in \mathbb{R}
\end{align}
and
\begin{equation}\label{bound_phi}
0 < \phi_1 \leq \phi_{c}(\xi) \leq \phi_2 < +\infty
\quad
\forall \xi \in \mathbb{R}
\quad
(\phi_1,\phi_2 = {\rm const}).
\end{equation}

\item[(iv)]
$f$ is Lipschitz continuous in all respective variables and
\begin{equation}\label{assum:bound_f}
0  \leq f(\xi_1, \xi_2, \xi_3)
\leq
C_f
\quad
\forall \xi_1, \xi_2, \xi_3 \in \mathbb{R}
\qquad (C_f = {\rm const}).
\end{equation}

Further, we assume
\begin{equation}\label{assum:zero_hydration_f}
f(\xi_1, \xi_2, \xi_3) =0
\quad
\forall \xi_{1} \leq {p}_{\infty}.
\end{equation}

\item[(v)] (Material constants. Boundary and initial data)
$\alpha_{e}$, $\beta_{e}$, $\alpha_1$, $\alpha_2$, ${p}_{\infty}$ and $\vartheta_{\infty}$
are assumed to be given constants, such that
\begin{equation}\label{material_constants}
\alpha_{e}>0, \; \beta_{e}>0, \; \alpha_1<0, \; \alpha_2>0, \; {p}_{\infty} <0, \; \vartheta_{\infty}>0
\end{equation}
and
\begin{equation}\label{material_constants_condition_drying}
\alpha_1 +  \varrho_w  C_{\phi} {C}_{s} <0.
\end{equation}

Finally, we assume ${\vartheta}_{0} \in W^{1,2}\cap L^{\infty}(\Omega)$
and
${p}_{0} \in L^{\infty}(\Omega)$
such that
\begin{equation}\label{cond:lower_bound_pressure}
{p}_{\infty} < {p}_0(\cdot)  \leq 0   \qquad \textmd{ a.e. in } \Omega.
\end{equation}

\end{itemize}
Throughout the paper the hypotheses (i)--(v) will be assumed.

\section{Main result}
\label{sec:main_result}

The existence of the so called weak solution
to the problem \eqref{eq1a}--\eqref{eq1g}
has been recently proven in \cite{BenesPazanin2018}
considering mixed Dirichlet-Neumann boundary conditions.
The existence proof for the problem with the Newton boundary conditions
is briefly sketched in Appendix~\ref{proof_1}.
The main result of the present paper is devoted to the homogenization
of the system \eqref{eq1a}--\eqref{eq1g} based on the two-scale convergence theory,
see Theorem~\ref{thm:main_result} below.

\bigskip

We first formulate the problem \eqref{eq1a}--\eqref{eq1g} in a weak sense.

\begin{definition}\label{def:weak_form}

Let $\varepsilon > 0$ be given.
We say that a triple
$[{p}^{\varepsilon},\vartheta^{\varepsilon},{r}^{\varepsilon}]$,
such that
\begin{align*}
&
{p}^{\varepsilon}
\in
L^2(0,T;{{W}^{1,2}(\Omega)}) \cap L^{\infty}({\Omega_T})   \cap L^{\infty}(\partial\Omega_T),
\\
&
\vartheta^{\varepsilon}
\in
L^2(0,T;{{W}^{1,2}(\Omega)})  \cap L^{\infty}({\Omega_T}),
\\
&
{r}^{\varepsilon} \in C([0,T];L^{\infty}(\Omega)) \cap  {L}^{2}(0,T;{W}^{1,2}(\Omega)),
\end{align*}
is a weak solution of the system
\eqref{eq1a}--\eqref{eq1g} iff
\begin{itemize}

\item[(i)]
$\partial_t  {b}({\cdot},{r}^{\varepsilon},{p}^{\varepsilon})
\in L^2(0,T;{{{{W}^{1,2}(\Omega)}'}})$,
$\partial_t \left[
c_{w}{b}({\cdot},{r}^{\varepsilon},{p}^{\varepsilon}) \vartheta^{\varepsilon}
+
\sigma({\cdot},{r}^{\varepsilon}) \vartheta^{\varepsilon}
\right]
\in L^2(0,T;{{{{W}^{1,2}(\Omega)}'}})$
and
\begin{equation}\label{eq:defn_weak_01}
\int_0^T
\langle
\partial_t {b}({x/\varepsilon},{p}^{\varepsilon},{r}^{\varepsilon}) ,\varphi
\rangle
{\rm d}t
+
\int_{\Omega_{T}}
\left[
 {b}({x/\varepsilon},{p}^{\varepsilon},{r}^{\varepsilon}) - {b}({x/\varepsilon},0,{p}_{0})
\right]
\partial_t \varphi
{\rm d}x {\rm d}t
=
0
\end{equation}
for every test function $\varphi \in L^2(0,T;{{W}^{1,2}(\Omega)})\cap W^{1,1}(0,T;L^{\infty}(\Omega))$ with
$\varphi(T)=0$ and
\begin{multline}\label{eq:defn_weak_02}
\int_0^T  \langle
\partial_t
\left[
c_{w}
{b}({x/\varepsilon},{p}^{\varepsilon},{r}^{\varepsilon}) \vartheta^{\varepsilon}
+
\sigma({x/\varepsilon},{r}^{\varepsilon}) \vartheta^{\varepsilon}
\right],\psi
\rangle {\rm d}t
\\
+
\int_{\Omega_T}
\left[
\left(
c_{w}
{b}({x/\varepsilon},{p}^{\varepsilon},{r}^{\varepsilon})
+
\sigma({x/\varepsilon},{r}^{\varepsilon})
\right)
\vartheta^{\varepsilon}
-
\left(
c_{w}
{b}({x/\varepsilon},0,{p}_{0})
+
\sigma({x/\varepsilon},{0})
\right)
\vartheta_0
\right]
\partial_t \psi
{\rm d}x {\rm d}t
=
0
\end{multline}
for every test function $\psi \in L^2(0,T; {{W}^{1,2}(\Omega)})\cap W^{1,1}(0,T;L^{\infty}(\Omega))$ with
$\psi(T)=0$;

\item[(ii)]
the triple $[{p}^{\varepsilon},\vartheta^{\varepsilon},{r}^{\varepsilon}]$ satisfies the following system:
\begin{multline}\label{eq:defn_weak_03}
\int_0^T
\langle
\partial_t {b}({x/\varepsilon},{p}^{\varepsilon},{r}^{\varepsilon})  ,  \varphi
\rangle
{\rm d}t
+
\int_{\Omega_T}
{a}({x/\varepsilon},{p}^{\varepsilon},\vartheta^{\varepsilon},{r}^{\varepsilon})
\nabla {p}^{\varepsilon}
\cdot \nabla\varphi
{\rm d}x {\rm d}t
+
\int_{\partial\Omega_T}
\beta_{e}
{p}^{\varepsilon} \varphi
\,
{\rm d}{S} {\rm d}t
\\
=
\int_{\Omega_T}
\alpha_1   \chi_{c}({x/\varepsilon})  f({p}^{\varepsilon},\vartheta^{\varepsilon},{r}^{\varepsilon})
\varphi
\;
{\rm d}x {\rm d}t
+
\int_{\partial\Omega_T}
\beta_{e}
{p}_{\infty} \varphi
\,
{\rm d}{S} {\rm d}t
\end{multline}
for all test functions $\varphi \in L^2(0,T; {W}^{1,2}(\Omega))$,
further,
\begin{multline}\label{eq:defn_weak_04}
\int_{0}^T
\langle
\partial_t
\left[
c_{w} {b}({x/\varepsilon},{p}^{\varepsilon},{r}^{\varepsilon}) \vartheta^{\varepsilon}
+
\sigma({x/\varepsilon},{r}^{\varepsilon}) \vartheta^{\varepsilon}
\right],
\psi
\rangle
{\rm d}t
+
\int_{\partial\Omega_T}
\alpha_{e} \vartheta^{\varepsilon} \psi
\,
{\rm d}{S} {\rm d}t
\\
+
c_{w}
\int_{\partial\Omega_T}
\beta_{e} \vartheta^{\varepsilon} (
{p}^{\varepsilon}
-
{p}_{\infty}
) \psi
\,
{\rm d}{S} {\rm d}t
\\
+
\int_{\Omega_T}
{\lambda}({x/\varepsilon},{p}^{\varepsilon},\vartheta^{\varepsilon},{r}^{\varepsilon})
\nabla \vartheta^{\varepsilon}
\cdot
\nabla\psi
\,
{\rm d}x {\rm d}t
+
\int_{\Omega_T}
c_{w} \vartheta^{\varepsilon}
{a}({x/\varepsilon},{p}^{\varepsilon},\vartheta^{\varepsilon},{r}^{\varepsilon})
\nabla {p}^{\varepsilon}
\cdot
\nabla\psi
\,
{\rm d}x {\rm d}t
\\
=
\int_{\Omega_T}
\alpha_2    \chi_{c}({x/\varepsilon}) f({p}^{\varepsilon},\vartheta^{\varepsilon},{r}^{\varepsilon})
\psi
\,
{\rm d}x {\rm d}t
+
\int_{\partial\Omega_T}
\alpha_{e} \vartheta_{\infty}  \psi
\,
{\rm d}{S} {\rm d}t
\end{multline}
for all test functions
$\psi \in L^2(0,T;{W}^{1,2}(\Omega))$
and, finally,
\begin{equation}\label{eq:memory_weak_form}
{r}^{\varepsilon}(x,t)
=
\int_0^t
f({p}^{\varepsilon}(x,s),\vartheta^{\varepsilon}(x,s),{r}^{\varepsilon}(x,s))
\,
{\rm d}s
\quad
\textmd{   for all } t \in [0,T].
\end{equation}

\end{itemize}

\end{definition}

\begin{theorem}\label{thm:existence_theorem_epsilon}
Let the assumptions {\rm (i)--(v)} be satisfied. Then there exists at
least one weak solution of the
system \eqref{eq1a}--\eqref{eq1g}. Moreover,
\begin{equation}\label{max_principle}
\ess \sup_{\Omega_T} |\vartheta^{\varepsilon}| \leq   C,
\end{equation}
where the constant  $C$ is independent of $\varepsilon$
and
\begin{equation}\label{bound_p}
0 \geq {p}^{\varepsilon} \geq {p}_{\infty}
\qquad
\textmd{almost everywhere in  } \Omega_{T} \textmd{ and }  \partial\Omega_{T}.
\end{equation}
\end{theorem}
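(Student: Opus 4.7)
Existence itself is established by the scheme of \cite{BenesPazanin2018} (with Newton boundary conditions sketched in Appendix~\ref{proof_1}); at fixed $\varepsilon>0$, the memory equation \eqref{eq1c} is decoupled from the parabolic system by a Banach fixed point in $C([0,T];L^{\infty}(\Omega))$, which is available because $f$ is Lipschitz and bounded, yielding $r^{\varepsilon}=\mathcal{R}(p^{\varepsilon},\vartheta^{\varepsilon})$ depending continuously on its arguments. Plugging $r^{\varepsilon}=\mathcal{R}(p^{\varepsilon},\vartheta^{\varepsilon})$ into \eqref{eq1a}--\eqref{eq1b} reduces the problem to a doubly nonlinear parabolic system of Alt--Luckhaus type \cite{AltLuckhaus1983}, which is solved by Galerkin approximation combined with a Schauder fixed point for the nonlinear coupling, after regularizing the degeneracy of $b$ in $p$. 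Uniform energy estimates and the $L^{\infty}$ bounds proved below provide the compactness needed to pass to the limit.

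The pointwise bound \eqref{bound_p} is proved first, since it is precisely what removes the degeneracy for the subsequent arguments. For the lower bound I would test \eqref{eq:defn_weak_03} with $\varphi=(p^{\varepsilon}-p_{\infty})_{-}\in L^{2}(0,T;W^{1,2}(\Omega))$. The source vanishes where $p^{\varepsilon}\le p_{\infty}$ by \eqref{assum:zero_hydration_f}; the Newton boundary integral equals $-\int_{\partial\Omega_{T}}\beta_{e}[(p^{\varepsilon}-p_{\infty})_{-}]^{2}\,dS\,dt\le 0$; and the parabolic term, handled via a standard chain-rule identity exploiting monotonicity of $\mathcal{S}$ in $p$ and the regular dependence of $b$ on $r^{\varepsilon}$ (which is Lipschitz in time), controls $\frac{d}{dt}\int_{\Omega}G(\cdot,(p^{\varepsilon}-p_{\infty})_{-})$ for a suitable nonnegative primitive $G$. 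Since $(p_{0}-p_{\infty})_{-}=0$ by \eqref{cond:lower_bound_pressure}, Gronwall's inequality forces $(p^{\varepsilon}-p_{\infty})_{-}\equiv0$. For the upper bound I test with $\varphi=(p^{\varepsilon})_{+}$ and split $\partial_{t}b(x/\varepsilon,p^{\varepsilon},r^{\varepsilon})$ into the two contributions coming from $\partial_{t}p^{\varepsilon}$ and $\partial_{t}r^{\varepsilon}=f$. The latter produces a remainder bounded in modulus by $\varrho_{w}C_{\phi}\mathcal{S}(p^{\varepsilon})f$, which, combined with the genuine source $\alpha_{1}\chi_{c}f$, gives on $\{p^{\varepsilon}>0\}$ a coefficient not exceeding $\alpha_{1}+\varrho_{w}C_{\phi}C_{s}<0$ by \eqref{material_constants_condition_drying}. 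Together with $(p_{0})_{+}=0$, the coercivity of the elliptic term and the favorable sign of the boundary term (since $p_{\infty}<0$), this yields $(p^{\varepsilon})_{+}\equiv0$.

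Once \eqref{bound_p} is available, $a$, $\lambda$, $b$, $\sigma$ are uniformly bounded with uniform ellipticity constants, and $\nabla p^{\varepsilon}\in L^{2}(\Omega_{T})$ follows from the usual energy estimate with uniform bounds. To prove \eqref{max_principle} I would use a De~Giorgi / Stampacchia type iteration on the level sets $\{\vartheta^{\varepsilon}>M\}$. The key step is to test \eqref{eq:defn_weak_04} with $\psi=(\vartheta^{\varepsilon}-M)_{+}$ and to test \eqref{eq:defn_weak_03} with $c_{w}\vartheta^{\varepsilon}(\vartheta^{\varepsilon}-M)_{+}$ (after Galerkin regularization so that this test function is admissible) and subtract. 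The cross-diffusion term $c_{w}\vartheta^{\varepsilon}a\nabla p^{\varepsilon}\cdot\nabla\psi$ in the energy equation is exactly canceled by the term generated from the pressure equation, leaving a coercive quadratic form in $\nabla(\vartheta^{\varepsilon}-M)_{+}$, a bounded source controlled by $|\alpha_{2}|C_{f}$, and a boundary contribution which is nonpositive as soon as $M\ge\vartheta_{\infty}$. Iteration on decreasing level sets (or a direct Moser scheme) yields the $\varepsilon$-independent upper bound; the lower bound follows symmetrically by testing with $-(\vartheta^{\varepsilon}+M)_{-}$.

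The main obstacle is precisely this last coupling: the convective flux $c_{w}\vartheta^{\varepsilon}a\nabla p^{\varepsilon}$ prevents any direct application of the maximum principle to the heat equation in isolation, and the cancellation above must be arranged carefully on the approximation level so that it survives passage to the limit. All constants arising in the above estimates depend only on the structural bounds in (i)--(v) and on the data $p_{0},\vartheta_{0},p_{\infty},\vartheta_{\infty}$, and therefore the resulting bounds are independent of $\varepsilon$, as asserted.
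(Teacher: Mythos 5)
Your overall architecture differs from the paper's in two places. For existence, the paper does not decouple the memory term by a Banach fixed point and then run Galerkin--Schauder; it uses Rothe's method (semi-implicit time discretization, Appendix~\ref{proof_1}), replaces the integral in \eqref{eq1c} by the sum \eqref{approximate_problem_04a}, and solves each stationary problem via a Kirchhoff transformation of the \emph{discrete} elliptic equation together with the regularity result of \cite{gallouet} to get $p_n^i,\vartheta_n^i\in W^{1,s}(\Omega)$, $s>2$, hence $L^\infty(\Omega)$; this discrete regularity is what makes the nonlinear test functions admissible. For \eqref{bound_p} the paper tests with truncations of the \emph{saturation}, $(\mathcal S(p_n^i)-\mathcal S(p_\infty))_-$ and $(\mathcal S(p_n^i)-\mathcal S(0))_+$, which turns the degenerate time term directly into telescoping squares and avoids your chain-rule primitive $G$; the structural inputs you identify (\eqref{assum:zero_hydration_f} for the lower bound, \eqref{material_constants_condition_drying} for the upper bound, the sign of the Newton term) are exactly the ones the paper uses, so this part of your argument is a legitimate variant. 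For \eqref{max_principle} the paper runs a Moser iteration with $\psi=(\bar\vartheta_n)^\ell$ in the heat equation and $\zeta=c_w\tfrac{\ell}{\ell+1}(\bar\vartheta_n)^{\ell+1}$ in the pressure equation, obtains an $\ell$-independent bound in $L^\infty(0,T;L^{\ell+1}(\Omega))$, and lets $\ell\to\infty$, rather than a De~Giorgi level-set scheme.

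The one genuine gap is in your cancellation claim for the temperature bound. With $\psi=(\vartheta^\varepsilon-M)_+$ in \eqref{eq:defn_weak_04} and $\varphi=c_w\vartheta^\varepsilon(\vartheta^\varepsilon-M)_+$ in \eqref{eq:defn_weak_03} one has
\begin{equation*}
\nabla\bigl[c_w\vartheta^\varepsilon(\vartheta^\varepsilon-M)_+\bigr]
= c_w\vartheta^\varepsilon\,\nabla(\vartheta^\varepsilon-M)_+ + c_w(\vartheta^\varepsilon-M)_+\nabla\vartheta^\varepsilon ,
\end{equation*}
so only the first piece cancels the convective term; the remainder $c_w\int a\,(\vartheta^\varepsilon-M)_+\,\nabla p^\varepsilon\cdot\nabla(\vartheta^\varepsilon-M)_+$ survives, and since $\nabla p^\varepsilon$ is controlled only in $L^2(\Omega_T)$ it cannot be absorbed by Young's inequality without already knowing $\|(\vartheta^\varepsilon-M)_+\|_{L^\infty}$ -- the bound you are trying to prove. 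This is precisely why the paper uses the power pair $(\vartheta^\ell,\;c_w\tfrac{\ell}{\ell+1}\vartheta^{\ell+1})$: there $\nabla\zeta=c_w\ell\,\vartheta^\ell\nabla\vartheta$ matches the convective term identically and the cancellation is exact, with no leftover. Your scheme can be repaired (e.g.\ by testing the pressure equation a second time with $\tfrac{c_w}{2}(\vartheta^\varepsilon-M)_+^2$ to convert the leftover flux term into lower-order terms), but as written the key step of your $L^\infty$ argument does not close.
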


\begin{proof}
The proof is similar to the proof of \cite[Theorem~3.2]{BenesPazanin2018}.
In particular,
in \cite{BenesPazanin2018}, we considered the problem with
homogeneous Dirichlet-Neumann
boundary conditions. Similar arguments apply to the case of
Newton boundary conditions.
For the convenience of the reader, in Appendix~\ref{proof_1},
we present the basic steps and emphasize the differences.
\hfill $\square$
\end{proof}

The main result of the paper is summarized in the following theorem.
\begin{theorem}[Main result]
\label{thm:main_result}
Let $\varepsilon>0$.
Suppose that the data satisfy the assumptions {\rm (i)--(v)}
and
$[{p}^{\varepsilon},\vartheta^{\varepsilon},{r}^{\varepsilon}]$
is the weak solution of \eqref{eq1a}--\eqref{eq1g}.
Then there exist the pairs $[{p},{p}_{1}]$ and $[\vartheta,{\vartheta}_{1}]$
and the function $r$, such that
\begin{align*}
&
{p} \in   L^2(0,T;{{W}^{1,2}(\Omega)}) \cap L^{\infty}(\Omega_T),
\quad
{p}_{1} \in L^2(\Omega_T;W^{1,2}_{per}({\mathcal{Y}})),
\\
&
\vartheta \in   L^2( 0,T; W^{1,2}(\Omega)) \cap L^{\infty}(\Omega_T),
\quad
{\vartheta_1} \in L^2(\Omega_T;W^{1,2}_{per}({\mathcal{Y}})),
\\
&
{r} \in   L^{\infty}(\Omega_T)
\end{align*}
and  a sequence
$[{p}^{\varepsilon_j},\vartheta^{\varepsilon_j},{r}^{\varepsilon_j}]$
such that
$\lim_{j\rightarrow+\infty}\varepsilon_j = 0^+$
and
\begin{eqnarray}
{p}^{\varepsilon_j}  \rightharpoonup  {p}
\;\textmd{ and }
\;
\vartheta^{\varepsilon_j} \rightharpoonup  \vartheta
&&
\textrm{weakly in } L^2(0,T;{W^{1,2}(\Omega)}),
\label{conv_21}
\\
{p}^{\varepsilon_j}  \rightharpoonup  {p}
\;\textmd{ and }
\;
\vartheta^{\varepsilon_j} \rightharpoonup  \vartheta
&&
\textrm{weakly star in } L^{\infty}(\Omega_T),
\label{conv_22}
\\
{p}^{\varepsilon_j}  \rightarrow {p}
\;\textmd{ and }\;
\vartheta^{\varepsilon_j} \rightarrow \vartheta
&&
\textrm{almost everywhere in } {\Omega_T},
\label{conv_23}
\\
{p}^{\varepsilon_j}  \rightarrow {p}
\;\textmd{ and }\;
\vartheta^{\varepsilon_j} \rightarrow \vartheta
&&
\textrm{almost everywhere in } {\partial\Omega_{T} },
\label{conv_23bb}
\\
\nabla {p}^{\varepsilon_j}  \rightarrow  \nabla_x {p} + \nabla_y {p}_{1}
\;\textmd{ and }\;
\nabla \vartheta^{\varepsilon_j} \rightarrow  \nabla_x \vartheta + \nabla_y {\vartheta_1}
&&
\textrm{in the two-scale sense},
\label{conv_24}
\\
{r}^{\varepsilon_j}  \rightharpoonup  {r}
&&
\textrm{weakly star in } L^{\infty}(\Omega_T),
\label{conv_25}
\\
{r}^{\varepsilon_j}  \rightarrow {r}
&&
\textrm{almost everywhere in } {\Omega_T}.
\label{conv_26}
\end{eqnarray}
Further, ${p}$, $\vartheta$ and ${r}$ satisfy (in a weak sense)
the following coupled homogenized problem in ${\Omega_T}$
\begin{equation}\label{eq:hom_01}
\partial_t {b}^{*}({p},{r})
-
\nabla\cdot
\left[
\bfA^*({p},\vartheta,{r})\nabla {p}
\right]
=
\alpha_1  \chi_{c}^*   {f}({p},\vartheta,{r})
\end{equation}
and
\begin{equation}\label{eq:hom_02}
\partial_t
\left[
\left(
c_{w}   {b}^{*}({p},{r}) + \sigma^*(r)
\right)
\vartheta
\right]
-
\nabla \cdot
\left[
\bfLambda^*({p},\vartheta,{r}) \nabla \vartheta
+
c_{w}\vartheta \bfA^*({p},\vartheta,{r}) \nabla {p}
\right]
=
\alpha_2   \chi_{c}^*   {f}({p},\vartheta,{r}),
\end{equation}
with the integral condition
\begin{equation}
{r}(x,t)
=
\int_0^t
{f}({p}(x,s),\vartheta(x,s),{r}(x,s))
{\rm d}s
\end{equation}
and with the boundary and initial conditions
\begin{align}
- \bfA^*({p},\vartheta,{r})\nabla {p} \cdot \bfn
&
= \beta_{e}
(
{p}
-
{p}_{\infty}
)
\quad
&&{\rm in} \;   \partial\Omega_{T},
\label{eq:hom_03}
\\
-
\bfLambda^*({p},\vartheta,{r}) \nabla \vartheta
\cdot \bfn
&
= \alpha_{e} (\vartheta - \vartheta_{\infty})
\quad
&&{\rm in} \;  \partial\Omega_{T},
\label{eq:hom_04}
\\
{p}(x,0)
= {p}_0(x),
\quad
\vartheta(x,0)
& = \vartheta_0(x)
\quad
&&{\rm in} \;  \Omega.
\label{eq:hom_05}
\end{align}
Here,
the homogenized coefficient functions are defined as
\begin{align}
&
\chi_{c}^*
=
\int_\mathcal{Y}
\chi_{c}({y})
{\rm d}y,
\\
&
{b}^{*}(\xi,\zeta)
=
\varrho_{w}
\int_\mathcal{Y}
\left[
\chi_{c}({y})
\phi_{c}(\zeta)
\mathcal{S}(\xi)
+
\chi_{a}({y})
\phi_{a}
\mathcal{S}(\xi)
\right]
{\rm d}y,
\\
&
\bfA^*(\xi,\eta,\zeta)
=
\int_{\mathcal{Y}}
{a}({y},\xi,\eta,\zeta)
\left( {\bfI_d} + \nabla_y \bfw \right)
{\rm d}y
,
\\
&
\sigma^*(\zeta)
=
\int_\mathcal{Y}
\left[
\chi_{c}({y})
\varrho_{sc} {c}_{sc}(1 - \phi_{c}(\zeta) )
+
\chi_{a}({y})
\varrho_{sa} {c}_{sa}(1 - \phi_{a} )
\right]
{\rm d}y,
\\
&
\bfLambda^*(\xi,\eta,\zeta)
=
\int_{\mathcal{Y}}
{\lambda}({y},\xi,\eta,\zeta)
\left(
{\bfI_d} + \nabla_y \bfv
\right)
{\rm d}y,
\end{align}
where
${\bfI_d}$ denotes the identity matrix of size ${2}\times {2}$.
Further, $\bfw = (w_1,w_2)$,
$\nabla_y\bfw$ is the matrix
$(\nabla_y\bfw)_{ij} = \partial {w}_{j} / \partial {y}_{i}$,
$i,j=1,2$.
Similarly,
$\bfv = ({v}_{1},{v}_{2})$ and
$\nabla_y \bfv$ is the matrix
$(\nabla_y\bfv)_{ij} = \partial {v}_{j} / \partial {y}_{i}$,
$i,j=1,2$.
Further,
${w}_{i}$ and ${v}_{i} \in {W^{1,2}_{per}(\mathcal{Y})}$
are periodic solutions of the following cell problems, respectively,
\begin{align}
&
-
\nabla_y \cdot
\left(
{a}({y},\xi,\eta,\zeta) (\bfe_i  + \nabla_y w_{i})
\right)
=
0
\quad \textrm{ in } \mathcal{Y}
\quad  \textmd{ and }
\quad
\int_\mathcal{Y}
{w}_{i}
\,{\rm d}y
=
0,
\qquad
i=1,2,
\label{eq:cell_problem_w}
\\
&
-\nabla_y \cdot
\left(
\lambda({y},\xi,\eta,\zeta) (\bfe_i  + \nabla_y {v}_{i})
\right)
=
0
\quad \textrm{  in } \mathcal{Y}
\quad  \textmd{ and }
\quad
\int_\mathcal{Y} {v}_{i}
\,{\rm d}y
=
0,
\qquad
i=1,2.
\label{eq:cell_problem_v}
\end{align}
\end{theorem}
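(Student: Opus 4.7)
The plan is to follow the classical two-scale homogenization program for doubly nonlinear parabolic systems, adapted to the degenerate elliptic part and the memory coupling \eqref{eq1c}. First I would derive uniform (in $\varepsilon$) a priori estimates on $({p}^{\varepsilon},\vartheta^{\varepsilon},{r}^{\varepsilon})$. The $L^{\infty}$ bounds from Theorem~\ref{thm:existence_theorem_epsilon} (\eqref{max_principle}--\eqref{bound_p}) lift the degeneracy: since ${p}_{\infty} \le {p}^{\varepsilon} \le 0$ and $\vartheta^{\varepsilon}$ is bounded, the coefficient ${k}_{R}(\mathcal{S}({p}^{\varepsilon}))$ is trapped inside a compact subset of $(0,+\infty)$, so ${a}^{\varepsilon}$ and ${\lambda}^{\varepsilon}$ are uniformly elliptic and bounded by (i)--(iii). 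Testing \eqref{eq:defn_weak_03} with $\varphi = {p}^{\varepsilon} - {p}_{\infty}$ and \eqref{eq:defn_weak_04} with $\psi = \vartheta^{\varepsilon}$, absorbing the cross term $c_{w}\vartheta^{\varepsilon}{a}^{\varepsilon}\nabla{p}^{\varepsilon}\cdot\nabla\psi$ via Young's inequality and the $L^{\infty}$-bound on $\vartheta^{\varepsilon}$, yields uniform $L^{2}(0,T;W^{1,2}(\Omega))$ bounds on ${p}^{\varepsilon}$ and $\vartheta^{\varepsilon}$. The memory equation \eqref{eq:memory_weak_form} together with \eqref{assum:bound_f} produces $\|{r}^{\varepsilon}\|_{L^{\infty}(\Omega_{T})} \le C_{f} T$ and Lipschitz time-dependence. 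Finally, using the governing equations one controls $\partial_{t}{b}({x/\varepsilon},{p}^{\varepsilon},{r}^{\varepsilon})$ and $\partial_{t}[(c_{w}{b}+\sigma)\vartheta^{\varepsilon}]$ in $L^{2}(0,T;W^{1,2}(\Omega)')$.

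Next I would extract compactness. Weak and weak-$\star$ limits \eqref{conv_21}, \eqref{conv_22}, \eqref{conv_25} follow from the a priori bounds. For the strong and a.e. convergences \eqref{conv_23}, \eqref{conv_26}, an Aubin--Lions--Simon argument is required, but since only compositions of $({p}^{\varepsilon},\vartheta^{\varepsilon},{r}^{\varepsilon})$ have time-derivative bounds, one first passes to ${b}({x/\varepsilon},{p}^{\varepsilon},{r}^{\varepsilon})$ and $[c_{w}{b}+\sigma]\vartheta^{\varepsilon}$, obtaining strong $L^{2}$ convergence of these, and then inverts the monotone dependence on $({p},\vartheta)$ guaranteed by \eqref{con11a} and \eqref{bound_phi} to recover strong convergence of ${p}^{\varepsilon}$ and $\vartheta^{\varepsilon}$ themselves; the trace convergence \eqref{conv_23bb} then follows from a standard compactness embedding. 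For ${r}^{\varepsilon}$, the integral representation \eqref{eq:memory_weak_form} combined with Lipschitz continuity of $f$ gives uniform equicontinuity in time and hence strong convergence via Arzel\`a--Ascoli. The gradient two-scale limits \eqref{conv_24} are then a direct application of Nguetseng--Allaire for sequences bounded in $L^{2}(0,T;W^{1,2}(\Omega))$.

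I would then pass to the limit in the weak formulation \eqref{eq:defn_weak_03}--\eqref{eq:defn_weak_04} with oscillating test functions
\[
\varphi^{\varepsilon}(x,t) = \varphi_{0}(x,t) + \varepsilon\,\varphi_{1}(x,x/\varepsilon,t),
\qquad
\psi^{\varepsilon}(x,t) = \psi_{0}(x,t) + \varepsilon\,\psi_{1}(x,x/\varepsilon,t),
\]
with $\varphi_{0},\psi_{0}\in C^{\infty}(\overline{\Omega_{T}})$ and $\varphi_{1},\psi_{1}\in C^{\infty}(\overline{\Omega_{T}};C^{\infty}_{per}(\mathcal{Y}))$. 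The strong a.e. convergence of $({p}^{\varepsilon},\vartheta^{\varepsilon},{r}^{\varepsilon})$ together with the continuity of ${a}$, ${b}$, ${\lambda}$, $\sigma$ in the state variables upgrades the coefficient sequences to strong two-scale convergence (via a Scorza--Dragoni / dominated convergence argument), so products of the form ${a}({x/\varepsilon},{p}^{\varepsilon},\vartheta^{\varepsilon},{r}^{\varepsilon})\nabla{p}^{\varepsilon}$ pass to the limit $\int_{\mathcal{Y}}{a}({y},{p},\vartheta,{r})(\nabla_{x}{p}+\nabla_{y}{p}_{1})\,{\rm d}y$. The Newton boundary integrals pass using strong trace convergence and the a.e. convergence on $\partial\Omega_{T}$ guaranteed by \eqref{conv_23bb}.

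Finally I would decouple the cell problems from the macroscopic system. Choosing $\varphi_{0}\equiv 0$ (respectively $\psi_{0}\equiv 0$) and letting $\varphi_{1}$ (respectively $\psi_{1}$) vary yields the cell equations for ${p}_{1}$ and ${\vartheta}_{1}$ on $\mathcal{Y}$, linear in $\nabla_{x}{p}$ and $\nabla_{x}\vartheta$ respectively. The ansatz ${p}_{1}(x,y,t) = \sum_{i=1}^{2}\partial_{x_{i}}{p}(x,t)\,{w}_{i}(y)$ and ${\vartheta}_{1}(x,y,t) = \sum_{i=1}^{2}\partial_{x_{i}}\vartheta(x,t)\,{v}_{i}(y)$ with ${w}_{i},{v}_{i}$ solving \eqref{eq:cell_problem_w}--\eqref{eq:cell_problem_v} reduces the two-scale system to \eqref{eq:hom_01}--\eqref{eq:hom_05} with the effective matrices $\bfA^{*}$ and $\bfLambda^{*}$. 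The main technical obstacle I expect is the identification of the nonlinear two-scale limit of the convective coupling $c_{w}\vartheta^{\varepsilon}{a}({x/\varepsilon},{p}^{\varepsilon},\vartheta^{\varepsilon},{r}^{\varepsilon})\nabla{p}^{\varepsilon}$ in \eqref{eq:defn_weak_04}, where $\nabla{p}^{\varepsilon}$ converges only in the two-scale sense while the prefactor is nonlinear in the solutions themselves; this step crucially relies on \eqref{bound_p} to prevent the coefficient from vanishing, on the strong convergence of $\vartheta^{\varepsilon}$, and on the Lipschitz regularity \eqref{lipschitz_phi}, \eqref{assum:bound_f} propagated to ${r}^{\varepsilon}$ through \eqref{eq:memory_weak_form}.
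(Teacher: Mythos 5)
Your overall architecture ($L^{\infty}$ bounds to remove the degeneracy, energy estimates, compactness, oscillating test functions, cell-problem decoupling) matches the paper's, but your compactness step contains a genuine gap. You propose to apply Aubin--Lions to the composites ${b}({x/\varepsilon},{p}^{\varepsilon},{r}^{\varepsilon})$ and $[c_{w}b+\sigma]\vartheta^{\varepsilon}$ and then to ``invert the monotone dependence''. Neither half works. First, these composites carry the rapidly oscillating factors $\chi_{c}({x/\varepsilon})$, $\chi_{a}({x/\varepsilon})$ (see \eqref{notation_form_b}), so they are not bounded in any spatial Sobolev norm uniformly in $\varepsilon$ and Aubin--Lions does not apply; indeed ${b}({x/\varepsilon},{p}^{\varepsilon},{r}^{\varepsilon})$ converges only weakly (to ${b}^{*}({p},{r})$), not strongly, in $L^{2}(\Omega_T)$. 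Second, even if strong convergence held, the monotone dependence on ${p}^{\varepsilon}$ cannot be inverted through the oscillating prefactor $\varrho_{w}[\chi_{c}({x/\varepsilon})\phi_{c}({r}^{\varepsilon})+\chi_{a}({x/\varepsilon})\phi_{a}]$. The paper avoids both problems by deriving time-translate estimates directly for the non-oscillating quantities: \eqref{est:compact_u_00} for $\mathcal{S}({p}^{\varepsilon})$, obtained by testing with $\chi_{(t,t+\tau)}({p}^{\varepsilon}(t+\tau)-{p}^{\varepsilon}(t))$ and peeling off the $\chi_{c}\phi_{c}$ factors using the Lipschitz bound \eqref{lipschitz_phi} and the memory equation; this is combined with the spatial $W^{1,2}$ bound on ${p}^{\varepsilon}$ itself and the Alt--Luckhaus compactness lemma, after which strict monotonicity of $\mathcal{S}$ in \eqref{con11a} gives ${p}^{\varepsilon}\rightarrow {p}$ a.e. An analogous translate estimate is derived for $\vartheta^{\varepsilon}$, and for ${r}^{\varepsilon}$ the a.e. convergence additionally requires the uniform spatial bound \eqref{est1_r_a1}: equicontinuity in time alone, as you propose, does not yield the relative compactness of the spatial sections needed for Arzel\`a--Ascoli.

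A secondary point: testing \eqref{eq:defn_weak_04} with $\psi=\vartheta^{\varepsilon}$ alone does not produce the energy identity, because the parabolic term is $\partial_t[(c_{w}b+\sigma)\vartheta^{\varepsilon}]$ rather than $(c_{w}b+\sigma)\partial_t\vartheta^{\varepsilon}$; the leftover term involving $\langle\partial_t(c_{w}b+\sigma),[\vartheta^{\varepsilon}]^2\rangle$ must be cancelled. The paper does this by simultaneously testing the mass equation \eqref{eq:defn_weak_03} with $c_{w}[\vartheta^{\varepsilon}]^2$ and the energy equation with $2\vartheta^{\varepsilon}$ and subtracting (see \eqref{eq:int_by_parts_2}), all justified through the $\tau$-shift regularization; your plan should incorporate this coupling rather than treating the temperature equation in isolation.
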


\paragraph{Schedule of the proof of the main result.}
In the proof of the main result we use the two-scale convergence theory.
The existence of the weak solution to \eqref{eq1a}--\eqref{eq1g}
may be proved in a similar same way as \cite[Theorem~3.2]{BenesPazanin2018},
see Appendix~\ref{proof_1}.
In the next section we derive necessary a priori estimates
with respect to the scale parameter $\varepsilon$
to show that there exists a sequence
$[{p}^{\varepsilon_j},{\vartheta}^{\varepsilon_j},{r}^{\varepsilon_j}]$,
whose limit is the solution
of the upscaled problem (in a weak sense).
The derivation of the
estimates is a nontrivial procedure due to degeneracies in the transport coefficients,
see (i) and (ii).
The crucial step is to obtain the $L^{\infty}$-estimates
$\| {p}^{\varepsilon} \|_{L^{\infty}(\Omega_{T})} \leq C$.
$\| {\vartheta}^{\varepsilon} \|_{L^{\infty}(\Omega_{T})} \leq C$
and
$\| {r}^{\varepsilon} \|_{L^{\infty}(\Omega_{T})} \leq C$.
Then, we get the usual elliptic a priori estimates
for the solution  $[{p}^{\varepsilon},\vartheta^{\varepsilon},{r}^{\varepsilon}]$ of the
two-scale problem \eqref{eq1a}--\eqref{eq1g}.
In particular, we shall prove the estimates
\begin{align*}
&
\| {p}^{\varepsilon} \|_{L^2(0,T;{W^{1,2}})} \leq C,
\qquad
\int_0^{T-\tau}
\left(
\mathcal{S}\left({p}^{\varepsilon}(t+\tau)\right)
-
\mathcal{S}\left({p}^{\varepsilon}(t)\right),{p}^{\varepsilon}(t+\tau) - {p}^{\varepsilon}(t)
\right)
\,{\rm d}t
\leq
C \tau,
\\
&
\| {\vartheta}^{\varepsilon} \|_{L^2(0,T;{W^{1,2}})} \leq C,
\qquad
\int_0^{T-\tau}
|{\vartheta}^{\varepsilon}(t+\tau) - {\vartheta}^{\varepsilon}(t)|^{2}
\,{\rm d}t
\leq
C \tau,
\\
&
\| {r}^{\varepsilon} \|_{L^2(0,T;{W^{1,2}})} \leq C,
\qquad
\int_0^{T-\tau}
| {r}^{\varepsilon}(t+\tau) - {r}^{\varepsilon}(t) |^{2}
\,{\rm d}t
\leq
C \tau.
\end{align*}
Based on these estimates
we can pass to the
limit as the space period $\varepsilon$ vanishes to get \eqref{conv_21}--\eqref{conv_26}
(through a selected subsequence).
Next, using the two-scale convergence theory \cite{Allaire1992,cioranescu,Nguetseng}
we may identify the limits as the solution of
the macro-scale formulation \eqref{eq:hom_01}--\eqref{eq:hom_05} (in a weak sense).

\section{Proof of the main result}
\label{sec:proof_main}
\subsection{A-priori estimates.}
\label{subsec:a_priori_estimates}
Here we present some a-priori bounds for the solution of
\eqref{eq:defn_weak_01}--\eqref{eq:memory_weak_form}.
First, we derive uniform estimates for the pressure ${p}^{\varepsilon}$.
Define the function $\Theta: \mathbb{R} \rightarrow \mathbb{R}$ given by the equation
\begin{equation}\label{est:press_02}
\Theta(\xi) = \int_{0}^{\xi}   \mathcal{S}'(z)z  {{\rm d}z},
\qquad
\xi \in \mathbb{R}.
\end{equation}
It is easy to check that
\begin{equation}\label{est:press_03}
\Theta(\xi_1) - \Theta(\xi_2) \leq [\mathcal{S}(\xi_1) - \mathcal{S}(\xi_2)]\xi_1
\qquad
\forall \xi_1,\xi_2 \in \mathbb{R}.
\end{equation}
Now, let $\tau>0$. Note that from \eqref{eq:memory_weak_form} and using \eqref{lipschitz_phi} we deduce
\begin{equation}\label{ineq:porosity_lipschitz}
|\phi_{c}({r}^{\varepsilon}(x,s))
-
\phi_{c}({r}^{\varepsilon}(x,s-\tau))|
\leq
{C}_{\phi}
\int_{s-\tau}^{s}
{f}({p}^{\varepsilon}(x,\xi),\vartheta^{\varepsilon}(x,\xi),{r}^{\varepsilon}(x,\xi))
\,
{\rm d}\xi.
\end{equation}
Define
${p}^{\varepsilon}(t)={p}_{0}$
and
${r}^{\varepsilon}(t) = 0$ for $-\tau < t < 0$. Letting
$\tau \rightarrow 0$ in the estimate
\begin{align*}
&
\frac{1}{\tau}
\int_{0}^{t}
\langle
b\left({x/\varepsilon},{p}^{\varepsilon}(s),{r}^{\varepsilon}(s)\right)
-
b\left({x/\varepsilon},{p}^{\varepsilon}(s-\tau),{r}^{\varepsilon}(s-\tau)\right),
{p}^{\varepsilon}(s)
\rangle
\,{\rm d}s
\nonumber
\\
=
\;
&
\frac{1}{\tau}
\int_{0}^{t}\int_{\Omega}
\left[
b\left({x/\varepsilon},{p}^{\varepsilon}(s),{r}^{\varepsilon}(s)\right)
-
b\left({x/\varepsilon},{p}^{\varepsilon}(s-\tau),{r}^{\varepsilon}(s-\tau)\right)
\right]
{p}^{\varepsilon}(s)
\,{\rm d}x{\rm d}s
\nonumber
\\
=
\;
&
\frac{\varrho_{w}}{\tau}
\int_{0}^{t}\int_{\Omega}
\left[
\chi_{c}({x/\varepsilon})
\phi_{c}({r}^{\varepsilon}(s))
\mathcal{S}({p}^{\varepsilon}(s))
-
\chi_{c}({x/\varepsilon})
\phi_{c}({r}^{\varepsilon}(s-\tau))
\mathcal{S}({p}^{\varepsilon}(s-\tau))
\right]
{p}^{\varepsilon}(s)
\,{\rm d}x{\rm d}s
\nonumber
\\
&
+
\frac{\varrho_{w}}{\tau}
\int_{0}^{t}\int_{\Omega}
\left[
\chi_{a}({x/\varepsilon})
\phi_{a}
\mathcal{S}({p}^{\varepsilon}(s))
-
\chi_{a}({x/\varepsilon})
\phi_{a}
\mathcal{S}({p}^{\varepsilon}(s-\tau))
\right]
{p}^{\varepsilon}(s)
\,{\rm d}x{\rm d}s
\nonumber
\\
=
\;
&
\frac{\varrho_{w}}{\tau}
\int_{0}^{t}\int_{\Omega}
\chi_{c}({x/\varepsilon})
\left(
\phi_{c}({r}^{\varepsilon}(s))
-
\phi_{c}({r}^{\varepsilon}(s-\tau))
\right)
\mathcal{S}({p}^{\varepsilon}(s))
{p}^{\varepsilon}(s)
\,{\rm d}x{\rm d}s
\nonumber
\\
&
+
\frac{\varrho_{w}}{\tau}
\int_{0}^{t}
\int_{\Omega}
\chi_{c}({x/\varepsilon})
\phi_{c}({r}^{\varepsilon}(s-\tau))
\left(
\mathcal{S}({p}^{\varepsilon}(s))
-
\mathcal{S}({p}^{\varepsilon}(s-\tau))
\right)
{p}^{\varepsilon}(s)
\,{\rm d}x{\rm d}s
\nonumber
\\
&
+
\frac{\varrho_{w}}{\tau}
\int_{0}^{t}\int_{\Omega}
\chi_{a}({x/\varepsilon})
\phi_{a}
\left(
\mathcal{S}({p}^{\varepsilon}(s))
-
\mathcal{S}({p}^{\varepsilon}(s-\tau))
\right)
{p}^{\varepsilon}(s)
\,{\rm d}x{\rm d}s
\nonumber
\\
\geq
\;
&
\frac{\varrho_{w}}{\tau}
\int_{0}^{t}\int_{\Omega}
\chi_{c}({x/\varepsilon})
\left(
\phi_{c}({r}^{\varepsilon}(s))
-
\phi_{c}({r}^{\varepsilon}(s-\tau))
\right)
\mathcal{S}({p}^{\varepsilon}(s))
{p}^{\varepsilon}(s)
\,{\rm d}x{\rm d}s
\nonumber
\\
&
+
\frac{\varrho_{w}}{\tau}
\int_{0}^{t}
\int_{\Omega}
\chi_{c}({x/\varepsilon})
\left[
\phi_{c}({r}^{\varepsilon}(s))
{\Theta}({p}^{\varepsilon}(s))
-
\phi_{c}({r}^{\varepsilon}(s-\tau))
{\Theta}({p}^{\varepsilon}(s-\tau))
\right]
\,{\rm d}x{\rm d}s
\nonumber
\\
&
-
\frac{\varrho_{w}}{\tau}
\int_{0}^{t}
\int_{\Omega}
\chi_{c}({x/\varepsilon})
\left(
\phi_{c}({r}^{\varepsilon}(s))
-
\phi_{c}({r}^{\varepsilon}(s-\tau))
\right)
{\Theta}({p}^{\varepsilon}(s))
\,{\rm d}x{\rm d}s
\nonumber
\\
&
+
\frac{\varrho_{w}}{\tau}
\int_{0}^{t}\int_{\Omega}
\chi_{a}({x/\varepsilon})
\phi_{a}
\left(
{\Theta}({p}^{\varepsilon}(s))
-
{\Theta}({p}^{\varepsilon}(s-\tau))
\right)
\,{\rm d}x{\rm d}s
\nonumber
\\
\geq
\;
&
\frac{\varrho_{w}}{\tau}
\int_{0}^{t}\int_{\Omega}
\chi_{c}({x/\varepsilon})
\left(
\phi_{c}({r}^{\varepsilon}(s))
-
\phi_{c}({r}^{\varepsilon}(s-\tau))
\right)
\mathcal{S}({p}^{\varepsilon}(s))
{p}^{\varepsilon}(s)
\,{\rm d}x{\rm d}s
\nonumber
\\
&
-
\frac{\varrho_{w}}{\tau}
\int_{0}^{t}
\int_{\Omega}
\chi_{c}({x/\varepsilon})
\left(
\phi_{c}({r}^{\varepsilon}(s))
-
\phi_{c}({r}^{\varepsilon}(s-\tau))
\right)
{\Theta}({p}^{\varepsilon}(s))
\,{\rm d}x{\rm d}s
\nonumber
\\
&
+
\frac{\varrho_{w}}{\tau}
\int_{t-\tau}^{t}
\int_{\Omega}
\chi_{c}({x/\varepsilon})
\phi_{c}({r}^{\varepsilon}(s))
{\Theta}({p}^{\varepsilon}(s))
\,{\rm d}x{\rm d}s
-
\frac{\varrho_{w}}{\tau}
\int_{-\tau}^{0}
\int_{\Omega}
\chi_{c}({x/\varepsilon})
\phi_{c}({r}^{\varepsilon}(s))
{\Theta}({p}^{\varepsilon}(s))
\,{\rm d}x{\rm d}s
\nonumber
\\
&
+
\frac{\varrho_{w}}{\tau}
\int_{t-\tau}^{t}
\int_{\Omega}
\chi_{a}({x/\varepsilon})
\phi_{a}
{\Theta}({p}^{\varepsilon}(s))
\,{\rm d}x{\rm d}s
-
\frac{\varrho_{w}}{\tau}
\int_{-\tau}^{0}
\int_{\Omega}
\chi_{a}({x/\varepsilon})
\phi_{a}
{\Theta}({p}^{\varepsilon}(s))
\,{\rm d}x{\rm d}s,
\end{align*}
we obtain
\begin{align}\label{eq:120}
&
\varrho_{w}
\int_{\Omega}
\left(
\chi_{c}({x/\varepsilon})
\phi_{c}({r}^{\varepsilon}(t))
{\Theta}({p}^{\varepsilon}(t))
+
\chi_{a}({x/\varepsilon})
\phi_{a}
{\Theta}({p}^{\varepsilon}(t))
\right)
{\rm d}{x}
\nonumber
\\
&
-
\varrho_{w}
\int_{\Omega}
\left(
\chi_{c}({x/\varepsilon})
\phi_{c}({0})
{\Theta}({p}^{\varepsilon}_{0})
+
\chi_{a}({x/\varepsilon})
\phi_{a}
{\Theta}({p}^{\varepsilon}_{0})
\right)
{\rm d}{x}
\nonumber
\\
\leq
&
\;
\int_0^T
\langle
\partial_t {b}({x/\varepsilon},{p}^{\varepsilon},{r}^{\varepsilon})  , {p}^{\varepsilon}
\rangle
{\rm d}t
+
C_{\phi}\varrho_{w}
\int_{\Omega_{t}}
\chi_{c}({x/\varepsilon})
f({p}^{\varepsilon},\vartheta^{\varepsilon},{r}^{\varepsilon})
|{\Theta}({p}^{\varepsilon}) - \mathcal{S}({p}^{\varepsilon}) {p}^{\varepsilon} |
\;
{\rm d}x {\rm d}s.
\end{align}
Setting $\chi_{(0,t)}{p}^{\varepsilon}$ as a test function in
\eqref{eq:defn_weak_03}
($\chi_{(0,t)}$ is the characteristic function of $(0,t)$)
and using \eqref{eq:120}
we arrive at
\begin{align}\label{est:120}
&
\varrho_{w}
\int_{\Omega}
\left[
\chi_{c}({x/\varepsilon})
\phi_{c}({r}^{\varepsilon}(t))
+
\chi_{a}({x/\varepsilon})
\phi_{a}
\right]
{\Theta}({p}^{\varepsilon}(t))
{\rm d}{x}
\nonumber
\\
&
+
\int_{\Omega_t}
{a}({x/\varepsilon},{p}^{\varepsilon},\vartheta^{\varepsilon},{r}^{\varepsilon})
|\nabla {p}^{\varepsilon}|^2
{\rm d}x{\rm d}s
+
\frac{\beta_{e}}{2}
\int_{\partial\Omega_{t}}
| {p}^{\varepsilon} |^{2}
\,
{\rm d}{S} {\rm d}t
\nonumber
\\
\leq
\;
&
\varrho_{w}
\int_{\Omega}
\left[
\chi_{c}({x/\varepsilon})
\phi_{c}({0})
+
\chi_{a}({x/\varepsilon})
\phi_{a}
\right]
{\Theta}({p}^{\varepsilon}_{0})
{\rm d}{x}
+
\frac{\beta_{e}}{2}
\int_{\partial\Omega_{t}}
|{p}_{\infty}|^{2}
\,
{\rm d}{S} {\rm d}t
\nonumber
\\
&
+
C_{\phi}
\varrho_{w}
\int_{\Omega_{t}}
\chi_{c}({x/\varepsilon})
f({p}^{\varepsilon},\vartheta^{\varepsilon},{r}^{\varepsilon})
|{\Theta}({p}^{\varepsilon}) - \mathcal{S}({p}^{\varepsilon}) {p}^{\varepsilon} |
\;
{\rm d}x {\rm d}s
\nonumber
\\
&
+
|\alpha_1|
\int_{\Omega_{t}}
\chi_{c}({x/\varepsilon})
f({p}^{\varepsilon},\vartheta^{\varepsilon},{r}^{\varepsilon})
|{p}^{\varepsilon}|
\;
{\rm d}x {\rm d}s
\end{align}
for almost all $t \in (0,T)$.
Using \eqref{con11a}, \eqref{bound_phi}, \eqref{assum:bound_f} and \eqref{bound_p}
and applying the Young's inequality
to last two terms on the right-hand side in
\eqref{est:120}
we get
\begin{equation}\label{}
\int_{\Omega}
{\Theta}({p}^{\varepsilon}(t))
{\rm d}{x}
+
\int_{0}^{t}
\|  {p}^{\varepsilon}\|^2_{W^{1,2}(\Omega)}
{\rm d}s
\\
\leq
{C}_{1}
+
{C}_{2}
\int_{0}^{t}
\int_{\Omega}
\chi_{c}
{\Theta}({p}^{\varepsilon})
\;
{\rm d}{x}{\rm d}s
\end{equation}
for almost all $t \in (0,T)$.
We now apply   the Gronwall's inequality to conclude that
\begin{equation}\label{est:energy_u}
\sup_{0 \leq t \leq T}
\int_{\Omega}
{\Theta}({p}^{\varepsilon}(t))
{\rm d}{x}
+
\int_0^T
\|{p}^{\varepsilon}\|^2_{W^{1,2}(\Omega)}
{\rm d}t
\leq
C.
\end{equation}
Next we proceed analogously as in \cite{FiloKacur1995}.
Using $\chi_{(t,t+\tau)}w$, $w\in {{W}^{1,2}(\Omega)}$, as a test function in
\eqref{eq:defn_weak_03} we can write
\begin{multline}\label{eq:101}
\langle
{b}({x/\varepsilon},{p}^{\varepsilon}(t+\tau),{r}^{\varepsilon}(t+\tau))
-
{b}({x/\varepsilon},{p}^{\varepsilon}(t),{r}^{\varepsilon}(t)),
w
\rangle
\\
+
\int_t^{t+\tau}
\int_{\Omega}
{a}({x/\varepsilon},{p}^{\varepsilon},\vartheta^{\varepsilon},{r}^{\varepsilon})
\nabla {p}^{\varepsilon}    \cdot   \nabla w
\,{\rm d}x{\rm d}s
+
\int_{t}^{t+\tau}
\int_{\partial\Omega}
\beta_{e}
{p}^{\varepsilon}  {w}
\,
{\rm d}{S} {\rm d}s
\\
=
\alpha_1
\int_t^{t+\tau}
\int_{\Omega}
\chi_{c}({x/\varepsilon})
f({p}^{\varepsilon},\vartheta^{\varepsilon},{r}^{\varepsilon})
w
\;
{\rm d}x {\rm d}s
+
\int_{t}^{t+\tau}
\int_{\partial\Omega}
\beta_{e}
 {p}_{\infty}  {w}
\,
{\rm d}{S} {\rm d}s.
\end{multline}
Now we set
$w = {p}^{\varepsilon}(t+\tau) - {p}^{\varepsilon}(t)$ and integrate \eqref{eq:101} with respect to $t$
over $(0,T-\tau)$ to obtain
\begin{multline*}
\int_0^{T-\tau}
\langle
{b}({x/\varepsilon},{p}^{\varepsilon}(t+\tau),{r}^{\varepsilon}(t+\tau))
-
{b}({x/\varepsilon},{p}^{\varepsilon}(t),{r}^{\varepsilon}(t)),
{p}^{\varepsilon}(t+\tau) - {p}^{\varepsilon}(t)
\rangle
{\rm d}t
\\
\leq
C \tau
\int_0^{T}
\left(
\| {p}^{\varepsilon} \|^2_{W^{1,2}(\Omega)}
+
1
\right)
{\rm d}t
\end{multline*}
and
\begin{align*}
&
\int_0^{T-\tau}
\int_{\Omega}
[
{b}({x/\varepsilon},{p}^{\varepsilon}(t+\tau),{r}^{\varepsilon}(t+\tau))
-
{b}({x/\varepsilon},{p}^{\varepsilon}(t),{r}^{\varepsilon}(t+\tau))
]
({p}^{\varepsilon}(t+\tau) - {p}^{\varepsilon}(t))
{\rm d}x{\rm d}t
\\
\leq
\;
&
\int_0^{T-\tau}
\int_{\Omega}
[
{b}({x/\varepsilon},{p}^{\varepsilon}(t),{r}^{\varepsilon}(t))
-
{b}({x/\varepsilon},{p}^{\varepsilon}(t),{r}^{\varepsilon}(t+\tau))
]
({p}^{\varepsilon}(t+\tau) - {p}^{\varepsilon}(t))
{\rm d}x{\rm d}t
\\
&
+
C \tau
\int_0^{T}
\left(
\| {p}^{\varepsilon} \|^2_{W^{1,2}(\Omega)}
+
1
\right)
{\rm d}t.
\end{align*}
In view of \eqref{notation_form_b}, we can write
\begin{align*}
&
\varrho_{w}
\int_0^{T-\tau}
\int_{\Omega}
\chi_{c}({x/\varepsilon})
\phi_{c}({r}^{\varepsilon}(t+\tau))
[\mathcal{S}({p}^{\varepsilon}(t+\tau))
-
\mathcal{S}({p}^{\varepsilon}(t))
]
({p}^{\varepsilon}(t+\tau) - {p}^{\varepsilon}(t))
{\rm d}x{\rm d}t
\\
&
+
\varrho_{w}
\int_0^{T-\tau}
\int_{\Omega}
\chi_{a}({x/\varepsilon})
\phi_{a}
[\mathcal{S}({p}^{\varepsilon}(t+\tau))
-
\mathcal{S}({p}^{\varepsilon}(t))
]
({p}^{\varepsilon}(t+\tau) - {p}^{\varepsilon}(t))
{\rm d}x{\rm d}t
\\
\leq
\;
&
\varrho_{w}
\int_0^{T-\tau}
\int_{\Omega}
\chi_{c}({x/\varepsilon})
[
\phi_{c}({r}^{\varepsilon}(t))
-
\phi_{c}({r}^{\varepsilon}(t+\tau))
]
\mathcal{S}({p}^{\varepsilon}(t))
({p}^{\varepsilon}(t+\tau) - {p}^{\varepsilon}(t))
{\rm d}x{\rm d}t
\\
&
+
C \tau
\int_0^{T}
\left(
\| {p}^{\varepsilon} \|^2_{W^{1,2}(\Omega)}
+
1
\right)
{\rm d}t.
\end{align*}
Now, owing to \eqref{bound_phi}, \eqref{assum:bound_f}, \eqref{ineq:porosity_lipschitz} and \eqref{est:energy_u}
we deduce
\begin{equation}\label{est:compact_u_00}
\int_0^{T-\tau}
\int_{ {\Omega} }
[\mathcal{S}({p}^{\varepsilon}(t+\tau))
-
\mathcal{S}({p}^{\varepsilon}(t))
]
({p}^{\varepsilon}(t+\tau) - {p}^{\varepsilon}(t))
{\rm d}x{\rm d}t
\leq
C \tau.
\end{equation}
Note that $C$ is independent of $\varepsilon$.
From \eqref{est:energy_u} it follows that
\begin{equation}\label{conv:u000}
{p}^{\varepsilon}  \rightharpoonup  {p}
\qquad
\textrm{weakly in } L^2(0,T;W^{1,2}(\Omega))
\end{equation}
(through a subsequence which we shall
again denote by ``$\left\{\varepsilon\right\}$'').
Using the
compactness arguments of \cite[Lemma 1.9]{AltLuckhaus1983}
(see also \cite[Eqs. (2.10)--(2.12)]{FiloKacur1995})
and the estimates \eqref{est:energy_u} and \eqref{est:compact_u_00}
we get
\begin{align}
&
\mathcal{S}({p}^{\varepsilon})
\rightarrow
\mathcal{S}({p})
\textmd{ in }L^1({\Omega}_{T})
\;
\textmd{  and   almost everywhere on } \, {\Omega}_{T}.
\label{eq555a}
\end{align}
Since $\mathcal{S}$
is a strictly monotone function (recall \eqref{con11a}),
it follows from \eqref{eq555a}
that, see \cite[Proposition 3.35]{Kacur1990a},
\begin{equation}
\label{conv:u00}
{p}^{\varepsilon}  \rightarrow  {p}
\qquad  \textrm{ almost everywhere on } {\Omega_T}.
%
%
\end{equation}

\bigskip

We next derive uniform estimates for ${\vartheta}^{\varepsilon}$.
First, let $\tau>0$ and define
${p}^{\varepsilon}(t)={p}_{0}$
and
${{\vartheta}}^{\varepsilon}(t)={\vartheta}_0$
for $-\tau < t < 0$.
Letting
$\tau \rightarrow 0$ in the estimate
\begin{align*}
&
-
\frac{c_{w}}{\tau}
\int_{0}^{t}
\int_{\Omega}
\left[
b\left({x/\varepsilon},{p}^{\varepsilon}(s),{r}^{\varepsilon}(s)\right)
-
b\left({x/\varepsilon},{p}^{\varepsilon}(s-\tau),{r}^{\varepsilon}(s-\tau)\right)
\right]
{\vartheta}^{\varepsilon}(s)^2
\,{\rm d}x{\rm d}s
\nonumber
\\
&
+
\frac{2 c_{w} }{\tau}
\int_{0}^{t}\int_{\Omega}
\left[
b\left({x/\varepsilon},{p}^{\varepsilon}(s),{r}^{\varepsilon}(s)\right)
{\vartheta}^{\varepsilon}(s)
-
b\left({x/\varepsilon},{p}^{\varepsilon}(s-\tau),{r}^{\varepsilon}(s-\tau)\right)
{\vartheta}^{\varepsilon}(s-\tau)
\right]
{\vartheta}^{\varepsilon}(s)
\,{\rm d}x{\rm d}s
\nonumber
\\
&
+
\frac{2}{\tau}
\int_{0}^{t}\int_{\Omega}
\left[
\sigma({x/\varepsilon},{r}^{\varepsilon}(s)) {\vartheta}^{\varepsilon}(s)
-
\sigma({x/\varepsilon},{r}^{\varepsilon}(s-\tau)) {\vartheta}^{\varepsilon}(s-\tau)
\right]
{\vartheta}^{\varepsilon}(s)
\,{\rm d}x{\rm d}s
\nonumber
\\
=\;
&
-
\frac{ c_{w} }{\tau}
\int_{0}^{t}
\langle
b\left({x/\varepsilon},{p}^{\varepsilon}(s),{r}^{\varepsilon}(s)\right)
-
b\left({x/\varepsilon},{p}^{\varepsilon}(s-\tau),{r}^{\varepsilon}(s-\tau)\right)
,
{\vartheta}^{\varepsilon}(s)^2
\rangle
\,{\rm d}s
\nonumber
\\
&
+
\frac{2 c_{w} }{\tau}
\int_{0}^{t}
\langle
b\left({x/\varepsilon},{p}^{\varepsilon}(s),{r}^{\varepsilon}(s)\right)    {\vartheta}^{\varepsilon}(s)
-
b\left({x/\varepsilon},{p}^{\varepsilon}(s-\tau),{r}^{\varepsilon}(s-\tau)\right)    {\vartheta}^{\varepsilon}(s-\tau),
{\vartheta}^{\varepsilon}(s)
\rangle
\,{\rm d}s
\nonumber
\\
&
+
\frac{2}{\tau}
\int_{0}^{t}
\langle
\sigma({x/\varepsilon},{r}^{\varepsilon}(s))   {\vartheta}^{\varepsilon}(s)
-
\sigma({x/\varepsilon},{r}^{\varepsilon}(s-\tau))   {\vartheta}^{\varepsilon}(s-\tau),
{\vartheta}^{\varepsilon}(s)
\rangle
\,{\rm d}s
\nonumber
\\
\geq
\;
&
\frac{1}{\tau}
\int_{t-\tau}^{t}
\int_{\Omega}
{\vartheta}^{\varepsilon}(s)^2
\left[
c_{w} b\left({x/\varepsilon},{p}^{\varepsilon}(s),{r}^{\varepsilon}(s)\right)
+
\sigma({x/\varepsilon},{r}^{\varepsilon}(s))
\right]
\, {\rm d}x  {\rm d}s
\nonumber
\\
&
-
\frac{1}{\tau}
\int_{-\tau}^{0}
\int_{\Omega}
{\vartheta}^{\varepsilon}(s)^2
\left[
c_{w} b\left({x/\varepsilon},{p}^{\varepsilon}(s),{r}^{\varepsilon}(s)\right)
+
\sigma({x/\varepsilon},{r}^{\varepsilon}(s))
\right]
\, {\rm d}x  {\rm d}s
\end{align*}
we obtain
\begin{align}\label{eq:int_by_parts_2}
&
\int_{\Omega}
{\vartheta}^{\varepsilon}(t)^2
\left[
c_{w} b\left({x/\varepsilon},{p}^{\varepsilon}(t),{r}^{\varepsilon}(t)\right)
+
\sigma({x/\varepsilon},{r}^{\varepsilon}(t))
\right]
{\rm d}x
-
\int_{\Omega}
{\vartheta}_0^2
\left[
c_{w} b\left({x/\varepsilon},{p}_{0},{0}\right)
+
\sigma({x/\varepsilon},{0})
\right]
{\rm d}{x}
\nonumber
\\
\leq
\;
&
-
\int_0^t
\langle
c_{w}
\partial_s
b\left( {x/\varepsilon},{p}^{\varepsilon}(s),{r}^{\varepsilon}(s) \right) ,
{\vartheta}^{\varepsilon}(s)^2
\rangle
{\rm d}s
\nonumber
\\
&
+
\int_{0}^t
\langle
\partial_s
\left(
\left[
c_{w}  b\left({x/\varepsilon},{p}^{\varepsilon}(s),{r}^{\varepsilon}(s)\right)
+
\sigma({x/\varepsilon},{r}^{\varepsilon}(s))
\right]
{\vartheta}^{\varepsilon}(s)
\right),
2{\vartheta}^{\varepsilon}(s)
\rangle
\,{\rm d}s
\end{align}
for almost all $t \in (0,T)$.
Now,
using $\varphi = c_{w}  [{\vartheta}^{\varepsilon}]^2$
as a test function in \eqref{eq:defn_weak_03}
and
$\psi = 2{\vartheta}^{\varepsilon}$ in \eqref{eq:defn_weak_04},
subtracting both equations
and using \eqref{eq:int_by_parts_2} we arrive at
\begin{align*}
&
\int_{\Omega}
{\vartheta}^{\varepsilon}(t)^2
\left[
c_{w}  b\left({x/\varepsilon},{p}^{\varepsilon}(t),{r}^{\varepsilon}(t)\right)
+
\sigma({x/\varepsilon},{r}^{\varepsilon}(t))
\right]
{\rm d}x
\\
&
+
\int_{\Omega_t}
2
{\lambda}({x/\varepsilon},{p}^{\varepsilon},\vartheta^{\varepsilon},{r}^{\varepsilon})
|\nabla{\vartheta}^{\varepsilon}|^2
{\rm d}x{\rm d}s
\\
&
+
2
\int_{\partial\Omega_{t}}
\alpha_{e} [\vartheta^{\varepsilon}]^{2}
\,
{\rm d}{S} {\rm d}t
+
c_{w}
\int_{\partial\Omega_{t}}
\beta_{e}
[\vartheta^{\varepsilon}]^{2}
(
{p}^{\varepsilon}
-
{p}_{\infty}
)
\,
{\rm d}{S} {\rm d}t
\\
\leq
&
\;
\int_{\Omega}
{\vartheta}_0^2
\left[
c_{w} b\left({x/\varepsilon},{p}_{0},{0}\right)
+
\sigma({x/\varepsilon},{0})
\right]
{\rm d}{x}
+
2
\int_{\partial\Omega_{t}}
\alpha_{e} \vartheta_{\infty}  \vartheta^{\varepsilon}
\,
{\rm d}{S} {\rm d}t
\\
&
+
\int_{\Omega_t}
\left(
2 \alpha_{2} {\vartheta}^{\varepsilon}
-
\alpha_{1} c_{w} [{\vartheta}^{\varepsilon}]^2
\right)
\chi_{c}({x/\varepsilon})
f({p}^{\varepsilon},\vartheta^{\varepsilon},{r}^{\varepsilon})
{\rm d}x{\rm d}s
\end{align*}
for almost all $t \in (0,T)$.
Hence, using \eqref{con12c}, \eqref{con12d} and \eqref{bound_p} we arrive at the inequality
\begin{align*}
&
\int_{\Omega}
{\vartheta}^{\varepsilon}(t)^2
\left[
c_{w}   b\left({x/\varepsilon},{p}^{\varepsilon}(t),{r}^{\varepsilon}(t)\right)
+
\sigma({x/\varepsilon},{r}^{\varepsilon}(t))
\right]
{\rm d}x
\\
&
+
2 \lambda_{1} \int_{\Omega_t}   |\nabla{\vartheta}^{\varepsilon}|^2 {\rm d}x{\rm d}s
+
\int_{\partial\Omega_{t}}
\alpha_{e} [\vartheta^{\varepsilon} ]^{2}
\,
{\rm d}{S} {\rm d}t
\\
\leq
&
\;
\int_{\Omega}
{\vartheta}_0^2
\left[
c_{w}  b\left({x/\varepsilon},{p}_{0},{0}\right)
+
\sigma({x/\varepsilon},{0})
\right]
{\rm d}{x}
+
\int_{\partial\Omega_{t}}
\alpha_{e} [\vartheta_{\infty}]^{2}
\,
{\rm d}{S} {\rm d}t
\\
&
+
\int_{\Omega_t}
\left(
2 \alpha_{2} {\vartheta}^{\varepsilon}
-
\alpha_{1} c_{w} [{\vartheta}^{\varepsilon}]^2
\right)
\chi_{c}({x/\varepsilon})
f({p}^{\varepsilon},\vartheta^{\varepsilon},{r}^{\varepsilon})
{\rm d}x{\rm d}s.
\end{align*}
Therefore we conclude, applying the Gronwall's inequality, that
\begin{equation}\label{est:energy_theta}
\sup_{0 \leq t \leq T}
\|{\vartheta}^{\varepsilon}(t)\|^2_{L^2(\Omega)}
+
\int_0^T
\|{\vartheta}^{\varepsilon}\|^2_{W^{1,2}(\Omega)}
{\rm d}t
\leq
C,
\end{equation}
which immediately yields
\begin{eqnarray}
\| {\vartheta}^{\varepsilon} \|_{L^2(0,T;W^{1,2}(\Omega))} \leq C.
\label{est1_theta_a}
\end{eqnarray}
It follows that ${\vartheta}^{\varepsilon}$ converges weakly in $L^2(0,T;W^{1,2}(\Omega))$
to ${\vartheta}$ (along  a selected subsequence).

Further,
using $\chi_{(t,t+\tau)}w$, $w\in {{W}^{1,2}(\Omega)}$,
as a test function in
\eqref{eq:defn_weak_04} we obtain
\begin{align}\label{eq:comp_theta_02}
&
\langle
c_{w}
{b}\left(
{x/\varepsilon},{p}^{\varepsilon}(t+\tau),{r}^{\varepsilon}(t+\tau)
\right)
{\vartheta}^{\varepsilon}(t+\tau)
-
c_{w}  b\left({x/\varepsilon},{p}^{\varepsilon}(t),{r}^{\varepsilon}(t)\right)
{\vartheta}^{\varepsilon}(t),
w
\rangle
\nonumber
\\
&
+
\langle
\sigma({x/\varepsilon},{r}^{\varepsilon}(t+\tau))
{\vartheta}^{\varepsilon}(t+\tau)
-
\sigma({x/\varepsilon},{r}^{\varepsilon}(t))
{\vartheta}^{\varepsilon}(t),
w
\rangle
\nonumber
\\
&
+
\int_{t}^{t+\tau}
\int_{\Omega}
{\lambda}({x/\varepsilon},{p}^{\varepsilon},\vartheta^{\varepsilon},{r}^{\varepsilon})
\nabla {\vartheta}^{\varepsilon}
\cdot
\nabla w
\,{\rm d}x {\rm d}s
\nonumber
\\
&
+
\int_{t}^{t+\tau}
\int_{\Omega}
c_{w} \vartheta^{\varepsilon}
{a}({x/\varepsilon},{p}^{\varepsilon},\vartheta^{\varepsilon},{r}^{\varepsilon})
\nabla {p}^{\varepsilon}
\cdot \nabla   w
\,{\rm d}x {\rm d}s
\nonumber
\\
&
+
\int_{t}^{t+\tau}
\int_{\partial\Omega}
\alpha_{e} \vartheta^{\varepsilon}  w
\,
{\rm d}{S} {\rm d}s
+
c_{w}
\int_{t}^{t+\tau}
\int_{\partial\Omega}
\beta_{e} \vartheta^{\varepsilon} (
{p}^{\varepsilon}
-
{p}_{\infty}
)
{w}
\,
{\rm d}{S} {\rm d}s
\nonumber
\\
=
&
\;
\int_{t}^{t+\tau}
\int_{\Omega}
\alpha_2  \chi_{c}({x/\varepsilon})  f({p}^{\varepsilon},\vartheta^{\varepsilon},{r}^{\varepsilon})    {w}
\,{\rm d}x {\rm d}s
+
\int_{t}^{t+\tau}
\int_{\partial\Omega}
\alpha_{e} \vartheta_{\infty}  {w}
\,
{\rm d}{S} {\rm d}s
.
\end{align}

Now we set
$w = {\vartheta}^{\varepsilon}(t+\tau) - {\vartheta}^{\varepsilon}(t)$
and integrate \eqref{eq:comp_theta_02}  with respect to $t$
over $(0,T-\tau)$ to obtain
\begin{align*}
&
\int_{0}^{T-\tau}
\left(
c_{w}
{b}\left(
{x/\varepsilon},{p}^{\varepsilon}(t+\tau),{r}^{\varepsilon}(t+\tau)
\right)
\left[
{\vartheta}^{\varepsilon}(t+\tau)
-
{\vartheta}^{\varepsilon}(t)
\right],
{\vartheta}^{\varepsilon}(t+\tau) - {\vartheta}^{\varepsilon}(t)
\right)
\,{\rm d}t
\nonumber
\\
&
+
\int_{0}^{T-\tau}
\left(
\left[
c_{w}
{b}\left(
{x/\varepsilon},{p}^{\varepsilon}(t+\tau),{r}^{\varepsilon}(t+\tau)
\right)
-
c_{w}  b\left({x/\varepsilon},{p}^{\varepsilon}(t),{r}^{\varepsilon}(t)\right)
\right]
{\vartheta}^{\varepsilon}(t),
{\vartheta}^{\varepsilon}(t+\tau) - {\vartheta}^{\varepsilon}(t)
\right)
\,{\rm d}t
\nonumber
\\
&
+
\int_{0}^{T-\tau}
\int_{\Omega}
\sigma({x/\varepsilon},{r}^{\varepsilon}(t+\tau))
|{\vartheta}^{\varepsilon}(t+\tau)
-
{\vartheta}^{\varepsilon}(t)|^2
\,{\rm d}x{\rm d}t
\nonumber
\\
&
+
\int_{0}^{T-\tau}
\int_{\Omega}
\left[
\sigma({x/\varepsilon},{r}^{\varepsilon}(t+\tau))
-
\sigma({x/\varepsilon},{r}^{\varepsilon}(t))
\right]
{\vartheta}^{\varepsilon}(t)
(
{\vartheta}^{\varepsilon}(t+\tau)
-
{\vartheta}^{\varepsilon}(t)
)
\,{\rm d}x{\rm d}t
\nonumber
\\
&
+
\int_{0}^{T-\tau}
\int_{t}^{t+\tau}
\int_{\Omega}
{\lambda}({x/\varepsilon},{p}^{\varepsilon},\vartheta^{\varepsilon},{r}^{\varepsilon})
\nabla {\vartheta}^{\varepsilon}(s)
\cdot
\nabla ({\vartheta}^{\varepsilon}(t+\tau) - {\vartheta}^{\varepsilon}(t) )
\,{\rm d}x {\rm d}s
{\rm d}t
\nonumber
\\
&
+
\int_{0}^{T-\tau}
\int_{t}^{t+\tau}
\int_{\Omega}
{c}_{w}
{\vartheta}^{\varepsilon}(s)
{a}({x/\varepsilon},{p}^{\varepsilon},\vartheta^{\varepsilon},{r}^{\varepsilon})
\nabla {p}^{\varepsilon}
\cdot\nabla  ({\vartheta}^{\varepsilon}(t+\tau) - {\vartheta}^{\varepsilon}(t) )
\,{\rm d}x {\rm d}s {\rm d}t
\nonumber
\\
&
+
\int_{0}^{T-\tau}
\int_{t}^{t+\tau}
\int_{\partial\Omega}
\alpha_{e} \vartheta^{\varepsilon}(s)  ({\vartheta}^{\varepsilon}(t+\tau) - {\vartheta}^{\varepsilon}(t) )
\,
{\rm d}{S} {\rm d}s {\rm d}t
\nonumber
\\
&
+
c_{w}
\int_{0}^{T-\tau}
\int_{t}^{t+\tau}
\int_{\partial\Omega}
\beta_{e}
\vartheta^{\varepsilon}(s)
(
{p}^{\varepsilon}(s)
-
{p}_{\infty}
)
({\vartheta}^{\varepsilon}(t+\tau) - {\vartheta}^{\varepsilon}(t) )
\,
{\rm d}{S} {\rm d}s {\rm d}t
\nonumber
\\
&
=
\int_{0}^{T-\tau}
\int_{t}^{t+\tau}
\int_{\Omega}
\alpha_2 \chi_{c}({x/\varepsilon}) f({p}^{\varepsilon},\vartheta^{\varepsilon},{r}^{\varepsilon})
({\vartheta}^{\varepsilon}(t+\tau) - {\vartheta}^{\varepsilon}(t) )
\,{\rm d}x {\rm d}s {\rm d}t
\nonumber
\\
&
+
\int_{0}^{T-\tau}
\int_{t}^{t+\tau}
\int_{\partial\Omega}
\alpha_{e} \vartheta_{\infty}(s)  ( {\vartheta}^{\varepsilon}(t+\tau) - {\vartheta}^{\varepsilon}(t) )
\,
{\rm d}{S} {\rm d}s {\rm d}t
.
\end{align*}
Hence we deduce
\begin{align}\label{est:comp_theta_05}
&
\quad
c_1
\int_{0}^{T-\tau}
\int_{\Omega}
|{\vartheta}^{\varepsilon}(t+\tau)
-
{\vartheta}^{\varepsilon}(t)|^2
\,{\rm d}x{\rm d}t
\nonumber
\\
&
\leq
\int_{0}^{T-\tau}
\left(
\left[
c_{w}
{b}\left(
{x/\varepsilon},{p}^{\varepsilon}(t+\tau),{r}^{\varepsilon}(t+\tau)
\right)
-
c_{w}  b\left({x/\varepsilon},{p}^{\varepsilon}(t),{r}^{\varepsilon}(t)\right)
\right]
{\vartheta}^{\varepsilon}(t),
{\vartheta}^{\varepsilon}(t+\tau) - {\vartheta}^{\varepsilon}(t)
\right)
\,{\rm d}t
\nonumber
\\
&
\quad
+
\int_{0}^{T-\tau}
\int_{\Omega}
\left[
\sigma({x/\varepsilon},{r}^{\varepsilon}(t+\tau))
-
\sigma({x/\varepsilon},{r}^{\varepsilon}(t))
\right]
{\vartheta}^{\varepsilon}(t)
(
{\vartheta}^{\varepsilon}(t+\tau)
-
{\vartheta}^{\varepsilon}(t)
)
\,{\rm d}x{\rm d}t
\nonumber
\\
&\quad
+
C \tau
\int_{0}^{T}
\left(
\|{\vartheta}^{\varepsilon}\|^2_{W^{1,2}(\Omega)}
+
\|{\vartheta}^{\varepsilon}\|_{L^{\infty}(\Omega)}
(\|{p}^{\varepsilon}\|_{W^{1,2}(\Omega)}+1)
\|{\vartheta}^{\varepsilon}\|_{W^{1,2}(\Omega)}
\right)
{\rm d}t.
\end{align}
The first integral on the right-hand side in \eqref{est:comp_theta_05}
can be further estimated using \eqref{con11a}
(Lipschitz continuity of $b$ in the second variable,
i.e. Lipschitz continuity of $\mathcal{S}$, see \eqref{notation_form_b}),
\eqref{max_principle} and the Young's inequality
to get
\begin{align}\label{est:comp_theta_06}
&
\int_{0}^{T-\tau}
\left(
\left[
c_{w}
{b}\left(
{x/\varepsilon},{p}^{\varepsilon}(t+\tau),{r}^{\varepsilon}(t+\tau)
\right)
-
c_{w}  b\left({x/\varepsilon},{p}^{\varepsilon}(t),{r}^{\varepsilon}(t)\right)
\right]
{\vartheta}^{\varepsilon}(t),
{\vartheta}^{\varepsilon}(t+\tau) - {\vartheta}^{\varepsilon}(t)
\right)
{\rm d}t
\nonumber
\\
\leq
&\;
c_{w}    C(\delta)
\|{\vartheta}^{\varepsilon}\|^2_{L^{\infty}({\Omega_T})}
\int_{0}^{T-\tau}
\int_{\Omega}
|
{b}\left(
{x/\varepsilon},{p}^{\varepsilon}(t+\tau),{r}^{\varepsilon}(t+\tau)
\right)
-
{b}\left(
{x/\varepsilon},{p}^{\varepsilon}(t),{r}^{\varepsilon}(t)
\right)
|^2
{\rm d}x{\rm d}t
\nonumber
\\
&
+
\delta
\int_{0}^{T-\tau}
\int_{\Omega}
|{\vartheta}^{\varepsilon}(t+\tau) - {\vartheta}^{\varepsilon}(t)|^2
{\rm d}x{\rm d}t
\nonumber
\\
\leq
&\;
C(\delta)
\int_{0}^{T-\tau}
\int_{\Omega}
\left(
{b}\left(
{x/\varepsilon},{p}^{\varepsilon}(t+\tau),{r}^{\varepsilon}(t+\tau)
\right)
-
{b}\left(
{x/\varepsilon},{p}^{\varepsilon}(t),{r}^{\varepsilon}(t)
\right),
{p}^{\varepsilon}(t+\tau) - {p}^{\varepsilon}(t)
\right)
{\rm d}x{\rm d}t
\nonumber
\\
&
+
\delta
\int_{0}^{T-\tau}
\int_{\Omega}
|{\vartheta}^{\varepsilon}(t+\tau) - {\vartheta}^{\varepsilon}(t)|^2
{\rm d}x{\rm d}t.
\end{align}
Now, choosing $\delta$ sufficiently small,
combining \eqref{est:comp_theta_05}, \eqref{est:comp_theta_06}
and using \eqref{max_principle},
\eqref{eq:a_priori_r_02},
\eqref{est:compact_u_00} and \eqref{est1_theta_a}
we get
\begin{equation}
\label{est:compactness_theta}
\int_0^{T-\tau}
\int_{\Omega}
|{\vartheta}^{\varepsilon}(t+\tau) - {\vartheta}^{\varepsilon}(t)|^2
\,{\rm d}x{\rm d}t
\leq
C \tau.
\end{equation}
Hence we conclude that
(using the estimates \eqref{est:energy_theta}, \eqref{est:compactness_theta} and \cite[Lemma 1.9]{AltLuckhaus1983})
\begin{equation*}
{\vartheta}^{\varepsilon}  \rightarrow  {\vartheta}
\qquad  \textrm{ almost everywhere on } {\Omega_T}
\; \textmd{(along  a selected subsequence)}.
\end{equation*}
Further, from \eqref{eq:defn_weak_03} and \eqref{eq:defn_weak_04},
using
\eqref{cond:intr_perm},
\eqref{cond:rel_perm_I},
\eqref{cond:rel_perm_II},
\eqref{assum:bound_f},
\eqref{max_principle}, \eqref{est:energy_u} and \eqref{est1_theta_a}
we have
\begin{equation}\label{est_49}
\| \partial_t
{b}({x/\varepsilon},{p}^{\varepsilon},{r}^{\varepsilon})
\|_{L^{2}(0,T;{{{{W}^{1,2}(\Omega)}'}})}  \leq C
\end{equation}
and
\begin{equation}\label{est_50}
\| \partial_t
\left[
c_{w} {b}({x/\varepsilon},{p}^{\varepsilon},{r}^{\varepsilon}) \vartheta^{\varepsilon}
+
\sigma({x/\varepsilon},{r}^{\varepsilon}) \vartheta^{\varepsilon}
\right]
\|_{L^{2}(0,T;{{{{W}^{1,2}(\Omega)}'}})}  \leq C.
\end{equation}
Finally, we present uniform estimates for ${r}^{\varepsilon}$.
From \eqref{eq:memory_weak_form} we have
\begin{equation}\label{eq:a_priori_r_01}
{r}^{\varepsilon}(x,t+\tau)
-
{r}^{\varepsilon}(x,t)
=
\int_{t}^{t+\tau}
f({p}^{\varepsilon}(x,s),\vartheta^{\varepsilon}(x,s),{r}^{\varepsilon}(x,s))
\,
{\rm d}s.
\end{equation}
Integrating \eqref{eq:a_priori_r_01} over $\Omega \times (0,T-\tau)$
and
using \eqref{assum:bound_f} we arrive at the estimate
\begin{equation}\label{eq:a_priori_r_02}
\int_0^{T-\tau}
\int_{\Omega}
|{r}^{\varepsilon}(x,t+\tau)
-
{r}^{\varepsilon}(x,t)
|^2
\,{\rm d}x{\rm d}t
\leq
C_1 \tau^{2}
\leq
C_2 \tau.
\end{equation}
Further,
\begin{equation}\label{eq:a_priori_r_03}
|{r}^{\varepsilon}(x,t)|
\leq
\int_0^t
|f({p}^{\varepsilon}(x,s),\vartheta^{\varepsilon}(x,s),{r}^{\varepsilon}(x,s))|
\,
{\rm d}s
\leq
{C}T
\quad
\textmd{ almost everywhere in }
\Omega_{T},
\end{equation}
where ${C}$ is independent of $\varepsilon$.
%
%
From \eqref{eq:memory_weak_form} we have, using \eqref{est:energy_u}, \eqref{est1_theta_a}, (iv)
and \cite[Proposition~1.28]{Roubicek2005},
\begin{equation*}
\|{r}^{\varepsilon}(t)\|^2_{{W}^{1,2}(\Omega)}
\leq
C
\int_0^t
\left(
\|{p}^{\varepsilon}(t)\|^2_{{W}^{1,2}(\Omega)}
+
\|{\vartheta}^{\varepsilon}(t)\|^2_{{W}^{1,2}(\Omega)}
+
\|{r}^{\varepsilon}(t)\|^2_{{W}^{1,2}(\Omega)}
\right)
{\rm d}s
\quad
\textmd{   for all } t \in [0,T].
\end{equation*}
Hence applying the Gronwall argument
we get the a priori estimate
\begin{equation}\label{est1_r_a1}
\|{r}^{\varepsilon}(t)\|_{{W}^{1,2}(\Omega)} \leq C
\quad
\textmd{   for all } t \in [0,T].
\end{equation}

\subsection{Passage to the limit for $\varepsilon \rightarrow 0$}
\label{subsec:passage_limit}
As a consequence of the preceding a priori estimates,
in particular,
\eqref{max_principle},
\eqref{bound_p},
\eqref{est:energy_u},
\eqref{est:energy_theta},
\eqref{est1_theta_a},
\eqref{est:compactness_theta},
\eqref{est_49},
\eqref{est_50},
\eqref{eq:a_priori_r_02},
\eqref{eq:a_priori_r_03}
and
\eqref{est1_r_a1}
and using
\cite[Lemma 1.9]{AltLuckhaus1983} and \cite[Lemma 3]{FiloKacur1995},
there exist functions
${p},{\vartheta},{r} \in L^2(0,T;W^{1,2}(\Omega)) \cap  L^{\infty}({\Omega_T})$,
$\beta,\omega \in L^2({\Omega_T})$ and the functionals
$\delta,\gamma \in L^2(0,T;{{{{W}^{1,2}(\Omega)}'}})$,
such that, along a selected
subsequence denoted again by $\left\{\varepsilon\right\}$, we have
\begin{align}
{p}^{\varepsilon} & \rightharpoonup  {p}
&&
\textrm{weakly in } L^2(0,T;W^{1,2}(\Omega)),
\label{conv:weak_u}
\\
{p}^{\varepsilon} &\rightharpoonup {p}
&&
\textrm{weakly star in } L^{\infty}({\Omega_T}),
\\
{p}^{\varepsilon} &\rightharpoonup {p}
&&
\textrm{weakly star in } L^{\infty}({\partial\Omega_T}),
\\
{p}^{\varepsilon} & \rightarrow {p}
&&
\textrm{almost everywhere in } {\Omega_T},
\label{conv:ae_p}
\\
{p}^{\varepsilon} & \rightarrow {p}
&&
\textrm{almost everywhere in } {\partial\Omega_T},
\\
{r}^{\varepsilon} & \rightharpoonup  {r}
&&
\textrm{weakly star in } L^{\infty}({\Omega_T}),
\label{conv:weak_r}
\\
{r}^{\varepsilon} & \rightarrow {r}
&&
\textrm{almost everywhere in } {\Omega_T},
\label{conv:ae_r}
\\
{b}({x/\varepsilon},{p}^{\varepsilon},{r}^{\varepsilon}) & \rightharpoonup  {\beta}
&&
\textrm{weakly in } L^2({\Omega_T}),
\label{}
\\
\partial_t {b}({x/\varepsilon},{p}^{\varepsilon},{r}^{\varepsilon})   &   \rightharpoonup    {\delta}
&&
\textrm{weakly in } L^2(0,T;{{{{W}^{1,2}(\Omega)}'}})
\label{conv:weak_der_01}
\end{align}
and
\begin{align}
{\vartheta}^{\varepsilon} & \rightharpoonup {\vartheta}
&&
\textrm{weakly in } L^2(0,T;W^{1,2}(\Omega)),
\label{}
\\
{\vartheta}^{\varepsilon} & \rightarrow {\vartheta}
&&
\textrm{almost everywhere in } {\Omega_T},
\label{conv:ae_theta}
\\
{\vartheta}^{\varepsilon} &\rightharpoonup {\vartheta}
&&
\textrm{weakly in } L^p({\Omega_T}), \; 1<p<+\infty,
\label{}
\\
{\vartheta}^{\varepsilon} &\rightharpoonup {\vartheta}
&&
\textrm{weakly star in } L^{\infty}({\Omega_T}),
\label{}
\\
c_{w} {b}({x/\varepsilon},{p}^{\varepsilon},{r}^{\varepsilon}) {\vartheta}^{\varepsilon}
+
\sigma({x/\varepsilon},{r}^{\varepsilon})   {\vartheta}^{\varepsilon}
& \rightharpoonup
{\omega}
&&\textrm{weakly in } L^2({\Omega_T}),
\label{}
\\
\partial_t
\left[
c_{w} {b}({x/\varepsilon},{p}^{\varepsilon},{r}^{\varepsilon})  {\vartheta}^{\varepsilon}
+
\sigma({x/\varepsilon},{r}^{\varepsilon})  {\vartheta}^{\varepsilon} \right]
&
\rightharpoonup
{\gamma}
&&
\textrm{weakly in } L^{2}(0,T;{(W^{1,2}(\Omega))'}).
\label{limit_der_heat}
\end{align}

Before we identify the limits ${p}$, ${\vartheta}$ and ${r}$,
we present some auxiliary convergence results (see Lemma~\ref{lem:weak_conv_01} and Lemma~\ref{lem:cinv:sup01}),
recall the definition and
review basic properties of the two-scale convergence
(see
Definition~\ref{def:two_scale_conv},
Remark~\ref{rem:two_scale_conv},
Lemma~\ref{cio_lemma_i},
Lemma~\ref{cio_lemma_ii},
Theorem~\ref{thm:two_scale_compactness}
and
Theorem~\ref{thm:two_scale_aux}).
We refer the reader to \cite{Allaire1992,Nguetseng} for a more thorough discussion.
The main result of this section is stated in Theorem~\ref{theorem:hom_limit_system} below.

\begin{definition}[\cite{Allaire1992}]\label{def:two_scale_conv}
Let ${v}^{\varepsilon}$ be a sequence in ${L}^{2}(\Omega)$.
Such a sequence  ${v}^{\varepsilon}$ is said to (weakly) two-scale converge
to a limit ${v}^{0} \in {L}^{2}(\Omega \times \mathcal{Y})$
(with respect to the scale $\left\{\varepsilon \right\}$)
\begin{equation}\label{def:two-scale_conv}
\int_{\Omega}
{v}^{\varepsilon}(x)
\varphi\left( x , \frac{x}{\varepsilon} \right)
\,{\rm d}x
\rightarrow
\int_{\Omega}
\int_{\mathcal{Y}}
{v}^{0}(x,y)
\varphi\left( x , y \right)
\,{\rm d}y {\rm d}x
\end{equation}
as $\varepsilon \rightarrow 0$
for all $\varphi \in {L}^{2}(\Omega; C_{per}({\mathcal{Y}}))$.
\end{definition}
\begin{remark}\label{rem:two_scale_conv}
Note that \eqref{def:two-scale_conv} holds also for any $\varphi$
of the form
$\varphi(x,y) = \varphi_{1}(y)  \varphi_{2}(x,y)$
with $\varphi_{1} \in {L}^{\infty}(\mathcal{Y})$
and
$\varphi_{2} \in {L}^{2}_{per}({\mathcal{Y}}; C(\overline{\Omega}))$,
see~\cite[Remark~9.4]{cioranescu}.
\end{remark}
\begin{lemma}\cite[Lemma 9.1 (i)]{cioranescu}
\label{cio_lemma_i}
Let
${v} \in L^p(\Omega; C_{per}({\mathcal{Y}}))$
with
$1 \leq p < +\infty$.
Then the function
${v}(\cdot,{\cdot/\varepsilon}) \in {L}^{p}(\Omega)$
with
$$
\| {v}(\cdot,{\cdot/\varepsilon}) \|_{L^p(\Omega)}
\leq
\| {v}(\cdot,\cdot) \|_{L^p(\Omega; C_{per}({\mathcal{Y}}))}
$$
and  ${v}(\cdot,{\cdot/\varepsilon})$ converges weakly in $L^p(\Omega)$ to
$$
\int_{\mathcal{Y}} {v}(\cdot,y) {dy}.
$$
\end{lemma}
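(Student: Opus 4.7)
\textbf{Proof plan for Lemma~\ref{cio_lemma_i}.}

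My plan is to establish the three claims --- measurability, the norm bound, and the weak convergence --- by first reducing to the case of \emph{simple Carath\'eodory functions} of the form $v(x,y)=\sum_{i=1}^{N}\chi_{A_{i}}(x)\varphi_{i}(y)$ with $\varphi_{i}\in C_{per}(\mathcal{Y})$ and measurable sets $A_{i}\subset\Omega$, and then extending by density, using that such simple functions are dense in $L^{p}(\Omega;C_{per}(\mathcal{Y}))$ (since $C_{per}(\mathcal{Y})$ is separable). The measurability of $x\mapsto v(x,x/\varepsilon)$ for a general $v$ will follow from approximating $v$ in $L^{p}(\Omega;C_{per}(\mathcal{Y}))$ by such simple functions $v_{n}$ and passing to a pointwise a.e.\ convergent subsequence. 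Indeed, for a simple $v_{n}$, the map $x\mapsto v_{n}(x,x/\varepsilon)$ is trivially measurable, and along a subsequence $\|v_{n_{k}}(x,\cdot)-v(x,\cdot)\|_{C_{per}(\mathcal{Y})}\to 0$ for almost every $x$, which forces $v_{n_{k}}(x,x/\varepsilon)\to v(x,x/\varepsilon)$ a.e., giving measurability.

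For the norm bound, the pointwise estimate
\begin{equation*}
|v(x,x/\varepsilon)|\leq\sup_{y\in\mathcal{Y}}|v(x,y)|=\|v(x,\cdot)\|_{C_{per}(\mathcal{Y})}
\end{equation*}
holds for almost every $x\in\Omega$ and is valid whenever the left-hand side is defined (hence at once gives $v(\cdot,\cdot/\varepsilon)\in L^{p}(\Omega)$). Raising to the $p$-th power and integrating yields the stated inequality $\|v(\cdot,\cdot/\varepsilon)\|_{L^{p}(\Omega)}\leq\|v\|_{L^{p}(\Omega;C_{per}(\mathcal{Y}))}$.

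For the weak convergence, fix a test function $\psi\in L^{p'}(\Omega)$. The classical mean-value result for periodic $L^{\infty}$-functions says that for any $\varphi\in C_{per}(\mathcal{Y})$,
\begin{equation*}
\varphi(\cdot/\varepsilon)\rightharpoonup\int_{\mathcal{Y}}\varphi(y)\,{\rm d}y\quad\text{weakly-$\ast$ in }L^{\infty}(\Omega),
\end{equation*}
and hence weakly in $L^{q}(\Omega)$ for every $1\leq q<\infty$. Applied termwise to the simple function $v_{n}(x,y)=\sum_{i}\chi_{A_{i}}(x)\varphi_{i}(y)$, this delivers
\begin{equation*}
\int_{\Omega}v_{n}(x,x/\varepsilon)\psi(x)\,{\rm d}x\;\longrightarrow\;\int_{\Omega}\Bigl(\int_{\mathcal{Y}}v_{n}(x,y)\,{\rm d}y\Bigr)\psi(x)\,{\rm d}x
\end{equation*}
as $\varepsilon\to 0$ for each fixed $n$. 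A standard $3\delta$-argument, combining the norm bound from the previous step (applied to $v-v_{n}$) with H\"older's inequality, closes the gap between $v$ and $v_{n}$ uniformly in $\varepsilon$: for any $\eta>0$, choose $n$ large enough that $\|v-v_{n}\|_{L^{p}(\Omega;C_{per}(\mathcal{Y}))}\|\psi\|_{L^{p'}(\Omega)}<\eta/3$, then pass to the limit $\varepsilon\to 0$ in the simple-function identity.

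The step I expect to be the main technical obstacle is the \emph{measurability} of $x\mapsto v(x,x/\varepsilon)$: strictly speaking one must argue that the Carath\'eodory-type representative of $v$ (measurable in $x$, continuous in $y$) exists and that the diagonal evaluation is measurable. Everything else (the pointwise sup-bound and the reduction of weak convergence to the classical periodic averaging lemma) is essentially routine once the density of finite sums $\sum\chi_{A_{i}}\varphi_{i}$ in $L^{p}(\Omega;C_{per}(\mathcal{Y}))$ and the weak-$\ast$ convergence of $\varphi(\cdot/\varepsilon)$ to its cell average are taken for granted.
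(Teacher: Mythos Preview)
The paper does not supply its own proof of this lemma: it is quoted verbatim from \cite[Lemma~9.1~(i)]{cioranescu} and used as a black box in the homogenization argument. Your plan is correct and is, in fact, essentially the proof given in Cioranescu--Donato: approximation by simple tensor-type functions $\sum_{i}\chi_{A_i}(x)\varphi_i(y)$ (dense in $L^p(\Omega;C_{per}(\mathcal{Y}))$ by separability of $C_{per}(\mathcal{Y})$), the trivial pointwise bound $|v(x,x/\varepsilon)|\leq\|v(x,\cdot)\|_{C_{per}(\mathcal{Y})}$, and the classical mean-value property of periodic functions combined with a $3\eta$-argument for the weak limit. Your identification of the measurability of the diagonal $x\mapsto v(x,x/\varepsilon)$ as the only genuinely delicate point is accurate, and your handling of it via a.e.\ pointwise limits of simple approximants is the standard route.
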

\begin{lemma}\cite[Lemma 9.1 (ii)]{cioranescu}
\label{cio_lemma_ii}
Suppose that
$\varphi(x,y) = \varphi_{1}(x)  \varphi_{2}(y)$
with $\varphi_{1} \in {L}^{s}(\Omega)$
and
$\varphi_{2} \in {L}^{q}(\Omega)$ with
$1 \leq s,q < +\infty$ and such that
$$
\frac{1}{s}
+
\frac{1}{q}
=
\frac{1}{p}.
$$
Then the function
${\varphi}(\cdot,{\cdot/\varepsilon}) \in {L}^{p}(\Omega)$
and
$$
\varphi\left( \cdot , \frac{\cdot}{\varepsilon} \right)
\rightharpoonup
 \varphi_{1}(\cdot)
\int_{\mathcal{Y}}
\varphi_{2}(y)
\,{\rm d}y
\;
\textmd{ weakly in }
\;
{L}^{p}(\Omega).
$$
\end{lemma}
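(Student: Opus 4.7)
The statement is essentially a factorization result: one wants to combine the known weak convergence of $\varphi_2(\cdot/\varepsilon)$ to its mean with the fixed $x$-dependent factor $\varphi_1(x)$. The natural route is to test against an arbitrary $\psi\in L^{p'}(\Omega)$ and reduce the question to a scalar weak convergence problem in $L^q(\Omega)$, which can in turn be settled by Lemma~\ref{cio_lemma_i} plus a density argument. Throughout I interpret the hypothesis ``$\varphi_2\in L^q(\Omega)$'' as $\varphi_2\in L^q_{per}(\mathcal{Y})$, which is the only reading compatible with the appearance of $\int_{\mathcal{Y}}\varphi_2(y)\,dy$ in the conclusion.

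\textbf{Step 1 (membership and uniform bound).} I would first apply H\"older's inequality with exponents $s$ and $q$ (this is legitimate since $1/s+1/q=1/p$) to obtain
\[
\|\varphi_1(\cdot)\,\varphi_2(\cdot/\varepsilon)\|_{L^p(\Omega)}
\le \|\varphi_1\|_{L^s(\Omega)}\,\|\varphi_2(\cdot/\varepsilon)\|_{L^q(\Omega)}.
\]
A standard change of variables for $\mathcal{Y}$-periodic functions yields
$\|\varphi_2(\cdot/\varepsilon)\|_{L^q(\Omega)}\le C\,\|\varphi_2\|_{L^q(\mathcal{Y})}$ with $C$ independent of $\varepsilon$, giving both $\varphi(\cdot,\cdot/\varepsilon)\in L^p(\Omega)$ and a uniform norm bound.

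\textbf{Step 2 (reduction to scalar weak convergence).} For arbitrary $\psi\in L^{p'}(\Omega)$, the conjugate-exponent computation
\[
\frac{1}{s}+\frac{1}{p'}
=\frac{1}{s}+1-\frac{1}{p}
=1-\frac{1}{q}
=\frac{1}{q'}
\]
shows that $g:=\varphi_1\psi\in L^{q'}(\Omega)$ by H\"older. Hence to establish the claimed weak convergence in $L^p(\Omega)$, it suffices to prove that $\varphi_2(\cdot/\varepsilon)\rightharpoonup \bar\varphi_2:=\int_{\mathcal{Y}}\varphi_2(y)\,dy$ weakly in $L^q(\Omega)$, i.e.
\[
\int_{\Omega}\varphi_2(x/\varepsilon)\,g(x)\,dx
\;\longrightarrow\;
\bar\varphi_2\int_{\Omega} g(x)\,dx
\qquad \forall\, g\in L^{q'}(\Omega).
\]

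\textbf{Step 3 (density).} When $\varphi_2\in C_{per}(\mathcal{Y})$ the convergence in Step~2 follows immediately from Lemma~\ref{cio_lemma_i} applied to $v(x,y):=\varphi_2(y)$. For a general $\varphi_2\in L^q_{per}(\mathcal{Y})$ I would pick an approximating sequence $\{\varphi_2^{n}\}\subset C_{per}(\mathcal{Y})$ with $\varphi_2^n\to\varphi_2$ in $L^q_{per}(\mathcal{Y})$ and split
\[
\int_{\Omega}\bigl(\varphi_2(x/\varepsilon)-\bar\varphi_2\bigr)g(x)\,dx
=\int_{\Omega}\bigl(\varphi_2-\varphi_2^n\bigr)(x/\varepsilon)\,g(x)\,dx
+\int_{\Omega}\bigl(\varphi_2^n(x/\varepsilon)-\bar\varphi_2^{\,n}\bigr)g(x)\,dx
+(\bar\varphi_2^{\,n}-\bar\varphi_2)\int_{\Omega}g(x)\,dx.
\]
The first and third terms are bounded by $C\|g\|_{L^{q'}(\Omega)}\|\varphi_2-\varphi_2^n\|_{L^q(\mathcal{Y})}$ uniformly in $\varepsilon$ thanks to the uniform bound from Step~1. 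The middle term vanishes as $\varepsilon\to 0$ by the smooth-case result. Passing first to $\varepsilon\to 0$ and then to $n\to\infty$ concludes the proof.

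\textbf{Main obstacle.} The only real point requiring care is the density passage: one must ensure that the map $f\mapsto f(\cdot/\varepsilon)$ is bounded from $L^q_{per}(\mathcal{Y})$ into $L^q(\Omega)$ \emph{uniformly} in $\varepsilon$, so that an approximation error in $L^q_{per}(\mathcal{Y})$ transfers to a controlled error at the $\varepsilon$-level. This is precisely the content of the change of variables in Step~1, and once it is in hand the rest is a routine two-limit interchange.
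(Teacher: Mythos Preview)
The paper does not give its own proof of this lemma; it is simply quoted from \cite[Lemma~9.1~(ii)]{cioranescu}, so there is nothing in the paper to compare against. Your argument is the standard one: H\"older for the uniform $L^p$-bound, the duality calculation $1/s+1/p'=1/q'$ to reduce to weak convergence of $\varphi_2(\cdot/\varepsilon)$ in $L^q(\Omega)$, and then Lemma~\ref{cio_lemma_i} combined with density of $C_{per}(\mathcal{Y})$ in $L^q_{per}(\mathcal{Y})$. This is correct and is essentially the proof given in Cioranescu--Donato; your reading of the hypothesis as $\varphi_2\in L^q_{per}(\mathcal{Y})$ rather than $L^q(\Omega)$ is also the right one.
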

\begin{theorem}[\cite{Allaire1992}]\label{thm:two_scale_compactness}
If $v^{\varepsilon}$ is a bounded sequence in ${L}^{2}(\Omega)$
then there exists a function ${v}^{0} \in {L}^{2}(\Omega \times \mathcal{Y})$
and a  subsequence of  $v^{\varepsilon}$
which two-scale converges to ${v}^{0}$.
Moreover, this two-scale convergent subsequence converges weakly in  ${L}^{2}(\Omega)$ to
$$
{v}(x) =
\int_{\mathcal{Y}}
{v}^{0}(x,y)
\,{\rm d}y.
$$
\end{theorem}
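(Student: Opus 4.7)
The plan is to prove the theorem via duality, following Nguetseng's original argument. First, I would associate with the bounded sequence $\{v^{\varepsilon}\}$ the family of linear functionals $\mu^{\varepsilon}$ on the separable Banach space $X = L^{2}(\Omega; C_{per}(\mathcal{Y}))$ defined by
\begin{equation*}
\mu^{\varepsilon}(\varphi) := \int_{\Omega} v^{\varepsilon}(x)\, \varphi\!\left(x, \tfrac{x}{\varepsilon}\right) {\rm d}x.
\end{equation*}
By the Cauchy--Schwarz inequality together with Lemma~\ref{cio_lemma_i} (whose norm bound controls $\|\varphi(\cdot, \cdot/\varepsilon)\|_{L^{2}(\Omega)}$ by $\|\varphi\|_{X}$), the family $\{\mu^{\varepsilon}\}$ is uniformly bounded in $X'$. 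Since $X$ is separable, the Banach--Alaoglu theorem produces a subsequence (still indexed by $\varepsilon$) and a functional $\mu_0 \in X'$ such that $\mu^{\varepsilon} \to \mu_{0}$ in the weak-$\ast$ topology of $X'$.

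Second, I would upgrade $\mu_{0}$ to a bounded linear functional on $L^{2}(\Omega \times \mathcal{Y})$. The crucial observation is that, for $\varphi \in X$, Lemma~\ref{cio_lemma_i} applied to $|\varphi|^{2} \in L^{1}(\Omega; C_{per}(\mathcal{Y}))$ yields
\begin{equation*}
\int_{\Omega} \left|\varphi\!\left(x, \tfrac{x}{\varepsilon}\right)\right|^{2} {\rm d}x
\;\longrightarrow\;
\int_{\Omega}\int_{\mathcal{Y}} |\varphi(x,y)|^{2}\, {\rm d}y\, {\rm d}x
= \|\varphi\|_{L^{2}(\Omega\times \mathcal{Y})}^{2}.
\end{equation*}
Combining this with the uniform $L^{2}(\Omega)$ bound on $v^{\varepsilon}$ and Cauchy--Schwarz in the integral defining $\mu^{\varepsilon}(\varphi)$, one obtains in the limit the refined inequality $|\mu_{0}(\varphi)| \leq C\,\|\varphi\|_{L^{2}(\Omega\times \mathcal{Y})}$ for every $\varphi \in X$. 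Since $X$ is dense in $L^{2}(\Omega \times \mathcal{Y})$, $\mu_{0}$ extends uniquely to a bounded linear functional on that Hilbert space, and the Riesz representation theorem yields a unique $v^{0} \in L^{2}(\Omega \times \mathcal{Y})$ such that
\begin{equation*}
\mu_{0}(\varphi) = \int_{\Omega}\int_{\mathcal{Y}} v^{0}(x,y)\, \varphi(x,y)\, {\rm d}y\, {\rm d}x
\qquad \forall\, \varphi \in L^{2}(\Omega \times \mathcal{Y}).
\end{equation*}
By construction, this is precisely the two-scale convergence $v^{\varepsilon} \to v^{0}$ in the sense of Definition~\ref{def:two_scale_conv}.

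Third, for the weak $L^{2}(\Omega)$ statement, I would specialize the test function to $\varphi(x,y) = \psi(x)$ with $\psi \in L^{2}(\Omega)$ (which belongs trivially to $X$). The two-scale convergence then reduces to
\begin{equation*}
\int_{\Omega} v^{\varepsilon}(x)\, \psi(x)\, {\rm d}x
\;\longrightarrow\;
\int_{\Omega} \psi(x) \left(\int_{\mathcal{Y}} v^{0}(x,y)\, {\rm d}y\right) {\rm d}x,
\end{equation*}
identifying the weak $L^{2}(\Omega)$ limit of $v^{\varepsilon}$ along the extracted subsequence as $v(x) := \int_{\mathcal{Y}} v^{0}(x,y)\, {\rm d}y$; the inclusion $v \in L^{2}(\Omega)$ follows at once from Jensen's inequality applied to the average over $\mathcal{Y}$.

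The main obstacle is the extension step: one must justify passing to the limit in the continuity estimate before invoking Riesz representation, and this relies on the density of the admissible test-function space $L^{2}(\Omega; C_{per}(\mathcal{Y}))$ in $L^{2}(\Omega \times \mathcal{Y})$. This density is the standard technical ingredient and follows from the separability of $C_{per}(\mathcal{Y})$ combined with an approximation of square-integrable functions by finite sums of simple tensor products $\psi(x)\,\eta(y)$ with $\psi \in L^{2}(\Omega)$ and $\eta \in C_{per}(\mathcal{Y})$. Once this is in place, all remaining steps are routine applications of the Banach--Alaoglu and Riesz theorems.
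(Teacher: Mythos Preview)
The paper does not prove this theorem: it is quoted verbatim from Allaire~\cite{Allaire1992} as a background result on two-scale convergence, with the reader referred to \cite{Allaire1992,Nguetseng} for details. Your proposal is a correct sketch of the standard Nguetseng--Allaire argument (uniform boundedness of the functionals $\mu^{\varepsilon}$ on $L^{2}(\Omega;C_{per}(\mathcal{Y}))$, weak-$\ast$ compactness via separability, the sharpened $L^{2}(\Omega\times\mathcal{Y})$ continuity bound obtained from Lemma~\ref{cio_lemma_i}, density plus Riesz representation, and finally specialization to $y$-independent test functions), so there is nothing substantive to compare.
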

If the sequence $v^{\varepsilon}$ is bounded in $W^{1,2}(\Omega)$
we have the following
\begin{theorem}\cite[Chapter 2]{Allaire1992}\label{thm:two_scale_aux}
Let $v^{\varepsilon}$ be a bounded sequence in $W^{1,2}(\Omega)$
that converges weakly to a limit $v$ in $W^{1,2}(\Omega)$.
Then, $v^{\varepsilon}$ two-scale converges to $v$, and
there exists a function ${v_1} \in L^{2}(\Omega; W^{1,2}_{per}({\mathcal{Y}}))$
such that,
up to a subsequence,
\begin{equation*}
\nabla v^{\varepsilon}  \rightarrow  \nabla_x v + \nabla_y {v_1}
\quad
\textrm{in the two-scale sense}.
\end{equation*}
\end{theorem}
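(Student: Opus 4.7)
The plan is to combine the compactness theorem (Theorem~3.7 in the excerpt) with two carefully tuned families of oscillating test functions, and to close the argument by a periodic de~Rham type lemma. First I would extract two-scale limits: since $\{v^{\varepsilon}\}$ is bounded in $W^{1,2}(\Omega)$, both $\{v^{\varepsilon}\}$ and $\{\nabla v^{\varepsilon}\}$ are bounded in $L^{2}(\Omega)$ and $L^{2}(\Omega)^{2}$ respectively. Applying Theorem~3.7 componentwise along a common subsequence yields $v^{0} \in L^{2}(\Omega \times \mathcal{Y})$ and $\bfxi^{0} \in L^{2}(\Omega \times \mathcal{Y})^{2}$ such that $v^{\varepsilon}$ two-scale converges to $v^{0}$ and $\nabla v^{\varepsilon}$ two-scale converges to $\bfxi^{0}$.

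Next I would show that $v^{0}(x,y) = v(x)$, i.e.\ the two-scale limit is independent of the microscopic variable. The trick is to test $\nabla v^{\varepsilon}$ against the \emph{scaled} oscillating function $\varepsilon \bfPhi(x,x/\varepsilon)$ for $\bfPhi \in C^{\infty}_{c}(\Omega; C^{\infty}_{per}(\mathcal{Y}))^{2}$. Since $\nabla v^{\varepsilon}$ is bounded in $L^{2}$, the resulting integral is $O(\varepsilon)$; integrating by parts and using the chain rule identity
\begin{equation*}
\nabla \bigl[ \bfPhi(x, x/\varepsilon) \bigr]
=
(\nabla_x \bfPhi)(x, x/\varepsilon)
+
\tfrac{1}{\varepsilon} (\nabla_y \bfPhi)(x, x/\varepsilon)
\end{equation*}
I pass to the limit $\varepsilon \to 0^{+}$ and invoke the two-scale convergence of $v^{\varepsilon}$ to conclude
\begin{equation*}
\int_{\Omega}\int_{\mathcal{Y}} v^{0}(x,y)\, \nabla_{y}\cdot \bfPhi(x,y)\,{\rm d}y\,{\rm d}x = 0
\qquad \forall \bfPhi,
\end{equation*}
which means $\nabla_{y} v^{0} = 0$, so $v^{0}$ is $y$-independent. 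Combined with the fact (from Theorem~3.7) that the weak $L^{2}(\Omega)$ limit of $v^{\varepsilon}$ equals $\int_{\mathcal{Y}} v^{0}(\cdot,y)\,{\rm d}y$, and with the hypothesis $v^{\varepsilon} \rightharpoonup v$ in $W^{1,2}(\Omega)$, I get $v^{0}(x,y) = v(x)$, establishing the first conclusion.

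To identify $\bfxi^{0}$, I would restrict attention to test functions $\bfPhi \in C^{\infty}_{c}(\Omega; C^{\infty}_{per}(\mathcal{Y}))^{2}$ satisfying the solenoidal constraint $\nabla_{y}\cdot \bfPhi(x,y) = 0$. Then, on one hand, the definition of two-scale convergence gives
\begin{equation*}
\int_{\Omega} \nabla v^{\varepsilon}(x)\cdot \bfPhi(x, x/\varepsilon)\,{\rm d}x
\longrightarrow
\int_{\Omega}\int_{\mathcal{Y}} \bfxi^{0}(x,y)\cdot \bfPhi(x,y)\,{\rm d}y\,{\rm d}x.
\end{equation*}
On the other hand, integration by parts (in which the $\frac{1}{\varepsilon}$ term drops out thanks to $\nabla_{y}\cdot \bfPhi = 0$ and the boundary term vanishes by compact support) together with the two-scale convergence of $v^{\varepsilon}$ to $v$ yields
\begin{equation*}
\int_{\Omega} \nabla v^{\varepsilon}\cdot \bfPhi(x, x/\varepsilon)\,{\rm d}x
\longrightarrow
-\int_{\Omega}\int_{\mathcal{Y}} v(x)\, \nabla_{x}\cdot \bfPhi(x,y)\,{\rm d}y\,{\rm d}x
=
\int_{\Omega}\int_{\mathcal{Y}} \nabla v(x) \cdot \bfPhi(x,y)\,{\rm d}y\,{\rm d}x,
\end{equation*}
where the last equality uses that $v$ is $y$-independent and integrates by parts in $x$. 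Equating the two limits, I obtain the orthogonality relation
\begin{equation*}
\int_{\Omega}\int_{\mathcal{Y}} \bigl[\bfxi^{0}(x,y) - \nabla v(x)\bigr]\cdot \bfPhi(x,y)\,{\rm d}y\,{\rm d}x = 0
\end{equation*}
for every $\bfPhi \in C^{\infty}_{c}(\Omega; C^{\infty}_{per}(\mathcal{Y}))^{2}$ with $\nabla_{y}\cdot \bfPhi = 0$.

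The main obstacle, and the non-routine step, is the last one: converting this orthogonality to a pointwise gradient representation. The plan is to invoke the periodic de~Rham/Helmholtz decomposition on the cell $\mathcal{Y}$, namely that the $L^{2}_{per}(\mathcal{Y})^{2}$-orthogonal complement of the space of $y$-solenoidal fields coincides with the space of gradients of $W^{1,2}_{per}(\mathcal{Y})$ functions. Applying this fibrewise and using measurable selection in the parameter $x$, I obtain $v_{1} \in L^{2}(\Omega; W^{1,2}_{per}(\mathcal{Y}))$ (normalized by $\int_{\mathcal{Y}} v_{1}(x,y)\,{\rm d}y = 0$) such that $\bfxi^{0}(x,y) - \nabla v(x) = \nabla_{y} v_{1}(x,y)$ a.e., which gives exactly the desired decomposition $\bfxi^{0} = \nabla_{x} v + \nabla_{y} v_{1}$ and completes the proof.
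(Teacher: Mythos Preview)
The paper does not supply its own proof of this theorem; it is quoted verbatim as a known result from Allaire's two-scale convergence paper, with the citation \cite[Chapter~2]{Allaire1992}. Your proposal reproduces precisely Allaire's original argument --- two-scale compactness, the $\varepsilon$-scaled test to kill the $y$-dependence of $v^{0}$, the solenoidal test to isolate $\bfxi^{0}-\nabla v$, and the periodic de~Rham lemma to write this remainder as $\nabla_{y}v_{1}$ --- so it is correct and coincides with the approach the paper is implicitly invoking.
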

\begin{lemma}\cite[Lemma 4.2]{LiSun2010}
\label{lem:weak_conv_01}
Let $(\mathcal{M},\mu)$ be a $\sigma$-finite measure space
and $f_n$ and $g_n \in L^1(\mathcal{\mathcal{M}})$
be two sequences of functions, and let $f,g,h \in L^1(\mathcal{M})$ such that
(as $n \rightarrow +\infty$)
\begin{align*}
f_n & \rightarrow  f
&&
\textrm{almost everywhere in } \mathcal{M},
\\
g_n & \rightharpoonup g
&&
\textrm{weakly in } L^1(\mathcal{M}),
\\
f_n g_n
& \rightharpoonup
 h
&&
\textrm{weakly in } L^1(\mathcal{M}).
\end{align*}
Then $h=fg$ almost everywhere in $\mathcal{M}$.
\end{lemma}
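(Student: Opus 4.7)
The plan is to carry out the standard two-scale homogenization program of Allaire--Nguetseng, with care for the degeneracies of $a,\lambda$, the memory coupling via $r^\varepsilon$, and the triple product in the enthalpy flux. The first step is to upgrade Theorem~\ref{thm:existence_theorem_epsilon} to $\varepsilon$-uniform a priori estimates: the $L^\infty$ bounds \eqref{max_principle}--\eqref{bound_p} are already independent of $\varepsilon$, while $\|r^\varepsilon\|_{L^\infty(\Omega_T)}\le C_f T$ follows at once from \eqref{eq:memory_weak_form} and \eqref{assum:bound_f}. Testing \eqref{eq:defn_weak_03} with $p^\varepsilon$ and using the primitive $\Theta(\xi)=\int_0^\xi \mathcal{S}'(z)z\,{\rm d}z$ with the inequality $\Theta(\xi_1)-\Theta(\xi_2)\le(\mathcal{S}(\xi_1)-\mathcal{S}(\xi_2))\xi_1$ yields, by Gronwall, a uniform $L^2(0,T;W^{1,2}(\Omega))$ bound for $p^\varepsilon$. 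The analogous bound for $\vartheta^\varepsilon$ is obtained by the coupled chain-rule trick: test \eqref{eq:defn_weak_03} with $c_w(\vartheta^\varepsilon)^2$, \eqref{eq:defn_weak_04} with $2\vartheta^\varepsilon$, and subtract, using the integration-by-parts identity for the coupled parabolic term.

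Next I would promote these weak bounds to almost everywhere compactness. Alt--Luckhaus-type time-translation estimates $\int_0^{T-\tau}\!\int_\Omega (\mathcal{S}(p^\varepsilon(t+\tau))-\mathcal{S}(p^\varepsilon(t)))(p^\varepsilon(t+\tau)-p^\varepsilon(t))\,{\rm d}x{\rm d}t\le C\tau$ and analogously for $\vartheta^\varepsilon$ are obtained by using $p^\varepsilon(t+\tau)-p^\varepsilon(t)$ (resp.\ $\vartheta^\varepsilon(t+\tau)-\vartheta^\varepsilon(t)$) as a test function in the weak formulation integrated over $(t,t+\tau)$, the memory drift being absorbed via \eqref{ineq:porosity_lipschitz}; the corresponding estimate for $r^\varepsilon$ is read off directly from \eqref{eq:memory_weak_form}. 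Combining with the uniform $W^{1,2}$ bound and \cite[Lemma~1.9]{AltLuckhaus1983} gives $\mathcal{S}(p^\varepsilon)\to\mathcal{S}(p)$ a.e., and the strict monotonicity in \eqref{con11a} then upgrades this to \eqref{conv_23}; the same mechanism yields the a.e.\ convergences of $\vartheta^\varepsilon$ and $r^\varepsilon$. The trace version \eqref{conv_23bb} follows from the $L^2(0,T;W^{1,2})$-bound and continuity of the trace operator. The weak (weak-$\ast$) convergences \eqref{conv_21}--\eqref{conv_22} and \eqref{conv_25} are immediate, and Theorem~\ref{thm:two_scale_aux} produces correctors $p_1,\vartheta_1\in L^2(\Omega_T;W^{1,2}_{per}(\mathcal{Y}))$ realizing \eqref{conv_24}.

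The third step is to pass to the two-scale limit in the weak formulation using test functions of the form $\varphi^\varepsilon(x,t)=\varphi_0(x,t)+\varepsilon\varphi_1(x,t,x/\varepsilon)$ with $\varphi_0\in C^\infty(\overline{\Omega_T})$ and $\varphi_1\in C^\infty(\overline{\Omega_T};C^\infty_{per}(\mathcal{Y}))$. Continuity of $b,a,\sigma,\lambda,f$ in their slow arguments, combined with the $L^\infty$ bounds and the a.e.\ convergences of $p^\varepsilon,\vartheta^\varepsilon,r^\varepsilon$, gives (by dominated convergence) strong $L^q$-convergence of the composite coefficient fields for every $q<\infty$; Lemma~\ref{lem:weak_conv_01} then identifies the two-scale limits of products such as $a(x/\varepsilon,p^\varepsilon,\vartheta^\varepsilon,r^\varepsilon)\nabla p^\varepsilon$ as $a(y,p,\vartheta,r)(\nabla_x p + \nabla_y p_1)$. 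Setting $\varphi_0=0$ isolates the microscopic ($y$-variational) identities, and linearity in $\nabla_x p$ (resp.\ $\nabla_x\vartheta$) gives the corrector representations $p_1=\sum_{i=1}^{2}w_i\,\partial_{x_i}p$ and $\vartheta_1=\sum_{i=1}^{2}v_i\,\partial_{x_i}\vartheta$ with $w_i,v_i$ solving \eqref{eq:cell_problem_w}--\eqref{eq:cell_problem_v} (parametrized by the macroscopic triple $(p,\vartheta,r)$). Substituting these representations back into the $y$-independent test identity produces \eqref{eq:hom_01}--\eqref{eq:hom_05} with the effective coefficients $b^\ast,\bfA^\ast,\sigma^\ast,\bfLambda^\ast$ as stated; the initial conditions descend from \eqref{eq:defn_weak_01}--\eqref{eq:defn_weak_02} by choosing test functions that do not vanish at $t=0$.

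The main obstacle will be the convective term $c_w\vartheta^\varepsilon a(x/\varepsilon,p^\varepsilon,\vartheta^\varepsilon,r^\varepsilon)\nabla p^\varepsilon$ in the enthalpy equation, which couples two nonlinearly oscillating factors with a two-scale convergent gradient; its treatment genuinely needs the $L^\infty$ bound on $\vartheta^\varepsilon$ to lift a.e.\ convergence to strong convergence in $L^q$ for every finite $q$ and thus justify passing the full product to the two-scale limit. A secondary subtlety, not present in \cite{BenesPazanin2017b}, is that we work in the original pressure variable rather than the Kirchhoff transform: the mobility $k_R(\mathcal{S}(p))$ could in principle degenerate, but the lower bound in \eqref{bound_p} together with the strict positivity in \eqref{cond:rel_perm_II} pins the coefficient away from zero on the range actually visited by the solution, so that the cell problems \eqref{eq:cell_problem_w}--\eqref{eq:cell_problem_v} are uniformly elliptic and uniquely solvable in $W^{1,2}_{per}(\mathcal{Y})/\mathbb{R}$.
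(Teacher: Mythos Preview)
Your proposal does not address the stated lemma at all. The statement in question is Lemma~\ref{lem:weak_conv_01}, a self-contained measure-theoretic fact: if $f_n\to f$ a.e., $g_n\rightharpoonup g$ weakly in $L^1(\mathcal{M})$, and $f_n g_n\rightharpoonup h$ weakly in $L^1(\mathcal{M})$, then $h=fg$ a.e. What you have written is instead a sketch of the proof of Theorem~\ref{thm:main_result} (the main homogenization result), in which Lemma~\ref{lem:weak_conv_01} is merely \emph{invoked} as a tool---indeed you cite it yourself in the third paragraph. None of the content you supply (uniform a priori estimates, Alt--Luckhaus time-translation bounds, two-scale test functions, corrector representations, cell problems) has any bearing on the abstract $L^1$ product-limit identification asserted in the lemma.

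Note also that the paper does not prove Lemma~\ref{lem:weak_conv_01}; it is quoted from \cite[Lemma~4.2]{LiSun2010} and used as a black box (see the proof of Lemma~\ref{lem:cinv:sup01}). If you wish to supply a proof, the standard argument is: by Egorov's theorem on a set of finite measure (exhausting $\mathcal{M}$ by such sets via $\sigma$-finiteness) one has $f_n\to f$ uniformly on a set $E$ with $\mu(\mathcal{M}\setminus E)$ small; on $E$ write $f_n g_n = f g_n + (f_n-f)g_n$, use that weakly convergent sequences in $L^1$ are uniformly integrable (Dunford--Pettis) to control the second term, and pass to the limit against $\chi_E\varphi$ for bounded $\varphi$ to conclude $h=fg$ a.e.\ on $E$, hence a.e.\ on $\mathcal{M}$.
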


Notice that the oscillations in the model are only due
to the spatial variable $x$ (through the characteristic functions $\chi^{\varepsilon}_{a}$ and $\chi^{\varepsilon}_{c}$).
In the homogenization procedure, the time variable
$t$ plays the role of a parameter.

\begin{lemma}\label{lem:cinv:sup01}
There exists a subsequence of $\left\{\varepsilon\right\}$,
still denoted by $\left\{\varepsilon\right\}$, such that
\begin{align}
\varrho_{w}\chi_{c}({x/\varepsilon}) \phi_{c}\left( {r}^{\varepsilon} \right)  \mathcal{S}({p}^{\varepsilon})
&
\rightharpoonup
\varrho_{w} \chi_{c}^* \phi_{c}(r) \mathcal{S}({p})
&&
\textrm{weakly in } L^2({\Omega_T}),
\label{conv:101}
\\
\varrho_{w} \chi_{a}({x/\varepsilon})
\phi_{a}  \mathcal{S}({p}^{\varepsilon})
&
\rightharpoonup
\varrho_{w} \chi_{a}^*
\phi_{a}\mathcal{S}({p})
&&
\textrm{weakly in } L^2({\Omega_T}),
\label{conv:101b}
\\
{b}({x/\varepsilon},{p}^{\varepsilon},{r}^{\varepsilon})
&
\rightharpoonup
{b}^{*}({p},{r})
&&
\textrm{weakly in } L^2({\Omega_T}),
\label{conv:101c}
\\
\partial_t
\left[
{b}({x/\varepsilon},{p}^{\varepsilon},{r}^{\varepsilon})
\right]
&
\rightharpoonup
\partial_t
\left[
{b}^{*}({p},{r})
\right]
&&
\textrm{weakly in } L^2(0,T;{{{{W}^{1,2}(\Omega)}'}}),
\label{conv:102c}
\\
c_{w} {b}({x/\varepsilon},{p}^{\varepsilon},{r}^{\varepsilon}) \vartheta^{\varepsilon}
+
\sigma({x/\varepsilon},{r}^{\varepsilon}) \vartheta^{\varepsilon}
&
\rightharpoonup
c_{w} {b}^{*}({p},{r})\vartheta
+
\sigma^*(r) \vartheta
&&\textrm{weakly in } L^2({\Omega_T})
\label{conv:103}
\end{align}
and
\begin{multline}\label{conv:104}
\partial_t  \left[
c_{w} {b}({x/\varepsilon},{p}^{\varepsilon},{r}^{\varepsilon}) \vartheta^{\varepsilon}
+
\sigma({x/\varepsilon},{r}^{\varepsilon}) \vartheta^{\varepsilon}
\right]
\\
\rightharpoonup
\partial_t \left[
c_{w} {b}^{*}({p},{r})\vartheta
+
\sigma^*(r) \vartheta
\right]
\quad
\textrm{ weakly in } L^2(0,T;{{(W^{1,2}(\Omega))'}}).
\end{multline}
\end{lemma}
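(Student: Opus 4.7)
Each of the products in \eqref{conv:101}--\eqref{conv:104} has the structure (oscillating characteristic function) $\times$ (continuous nonlinear function of $p^\varepsilon,\vartheta^\varepsilon,r^\varepsilon$). The strategy is to exploit the fact that the oscillating factor $\chi_c(x/\varepsilon)$ (resp.\ $\chi_a(x/\varepsilon)$) converges only weakly in $L^q(\Omega_T)$, $1\le q<\infty$, to the mean $\chi_c^*$ (resp.\ $\chi_a^*$) by Lemma~\ref{cio_lemma_i}, whereas the nonlinear factor converges strongly in every $L^q(\Omega_T)$ thanks to the pointwise convergences \eqref{conv:ae_p}, \eqref{conv:ae_r}, \eqref{conv:ae_theta} together with the uniform $L^\infty$-bounds. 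Passing to the limit in the product is then the standard coupling of a strongly and a weakly convergent sequence.

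For \eqref{conv:101} I would first note that $\phi_c(r^\varepsilon)\mathcal{S}(p^\varepsilon)$ is uniformly bounded in $L^\infty(\Omega_T)$ by \eqref{con11a} and \eqref{bound_phi}, and converges almost everywhere on $\Omega_T$ to $\phi_c(r)\mathcal{S}(p)$ by \eqref{conv:ae_p}, \eqref{conv:ae_r}, and the continuity properties of $\mathcal{S}$ and $\phi_c$ (recall \eqref{lipschitz_phi}). Lebesgue's dominated convergence theorem then gives strong convergence in every $L^q(\Omega_T)$. For a test function $\varphi\in L^2(\Omega_T)$ I would split
\begin{equation*}
\int_{\Omega_T}\chi_c(x/\varepsilon)\phi_c(r^\varepsilon)\mathcal{S}(p^\varepsilon)\varphi\,dx\,dt
=\int_{\Omega_T}\chi_c(x/\varepsilon)[\phi_c(r^\varepsilon)\mathcal{S}(p^\varepsilon)-\phi_c(r)\mathcal{S}(p)]\varphi\,dx\,dt
+\int_{\Omega_T}\chi_c(x/\varepsilon)\phi_c(r)\mathcal{S}(p)\varphi\,dx\,dt;
\end{equation*}
the first term vanishes by Cauchy--Schwarz and the strong $L^2$-convergence of the bracket, while the second converges to $\int_{\Omega_T}\chi_c^*\phi_c(r)\mathcal{S}(p)\varphi\,dx\,dt$ by Lemma~\ref{cio_lemma_i} applied to the $L^2$-function $\phi_c(r)\mathcal{S}(p)\varphi$. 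The argument for \eqref{conv:101b} is even simpler since $\phi_a$ is constant, and \eqref{conv:101c} then follows by adding \eqref{conv:101} and \eqref{conv:101b} in view of \eqref{notation_form_b}.

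Claim \eqref{conv:103} is obtained by the same scheme. The almost-everywhere convergence \eqref{conv:ae_theta} together with \eqref{max_principle} gives $\vartheta^\varepsilon\to\vartheta$ strongly in every $L^q(\Omega_T)$, and a repetition of the argument just given applied to \eqref{notation_form_sigma} yields $\sigma(x/\varepsilon,r^\varepsilon)\rightharpoonup\sigma^*(r)$ weakly in $L^2(\Omega_T)$. Writing
\begin{equation*}
[c_w b(x/\varepsilon,p^\varepsilon,r^\varepsilon)+\sigma(x/\varepsilon,r^\varepsilon)]\vartheta^\varepsilon
=[c_w b(x/\varepsilon,p^\varepsilon,r^\varepsilon)+\sigma(x/\varepsilon,r^\varepsilon)](\vartheta^\varepsilon-\vartheta)
+[c_w b(x/\varepsilon,p^\varepsilon,r^\varepsilon)+\sigma(x/\varepsilon,r^\varepsilon)]\vartheta,
\end{equation*}
the first summand tends to $0$ in $L^2(\Omega_T)$ (uniformly bounded coefficients times a strongly $L^2$-null sequence) while the second converges weakly to $[c_w b^*(p,r)+\sigma^*(r)]\vartheta$, because multiplication by the fixed $L^\infty$-function $\vartheta$ preserves weak $L^2$-convergence.

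Finally, the time-derivative statements \eqref{conv:102c} and \eqref{conv:104} follow by duality. The uniform bounds \eqref{est_49}--\eqref{est_50} already exhibited the weak limits $\delta,\gamma$ in \eqref{conv:weak_der_01} and \eqref{limit_der_heat}; to identify them, I would test against $\varphi\in C^1_c((0,T);W^{1,2}(\Omega))$, integrate by parts in time, and pass to the limit in
\begin{equation*}
\int_0^T\langle\partial_t b(x/\varepsilon,p^\varepsilon,r^\varepsilon),\varphi\rangle\,dt
=-\int_{\Omega_T}b(x/\varepsilon,p^\varepsilon,r^\varepsilon)\,\partial_t\varphi\,dx\,dt
\end{equation*}
using \eqref{conv:101c}, which forces $\delta=\partial_t b^*(p,r)$ in $\mathcal{D}'(\Omega_T)$; the identification $\gamma=\partial_t[c_w b^*(p,r)\vartheta+\sigma^*(r)\vartheta]$ is analogous via \eqref{conv:103}. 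The one delicate point, and hence the main (minor) obstacle, is ensuring that all these weak limits are extracted along a common subsequence, which is handled by a single diagonal extraction at the start of the passage to the limit.
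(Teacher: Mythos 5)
Your proof is correct, and it reaches the same conclusions by a somewhat different mechanism than the paper. The paper identifies the weak limits of the products by invoking Lemma~\ref{lem:weak_conv_01}: it takes $f_n=\phi_c(r^\varepsilon)\mathcal{S}(p^\varepsilon)$ (convergent a.e.\ by \eqref{conv:ae_p} and \eqref{conv:ae_r}), $g_n=\chi_c(x/\varepsilon)$ (weakly convergent to $\chi_c^*$ by Lemma~\ref{cio_lemma_ii} with $\varphi_1\equiv 1$), uses the uniform bounds \eqref{con11a} and \eqref{bound_phi} to guarantee a weak $L^2$ limit of the product, and then identifies that limit as the product of the limits; the time-derivative claims \eqref{conv:102c} and \eqref{conv:104} are then read off from the bounds \eqref{est_49}--\eqref{est_50} and the already-extracted weak limits \eqref{conv:weak_der_01} and \eqref{limit_der_heat}. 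You instead upgrade the a.e.\ convergence of the uniformly bounded nonlinear factor to strong $L^q(\Omega_T)$ convergence by dominated convergence and pass to the limit in the product of a strongly and a weakly convergent sequence via an explicit splitting; this bypasses Lemma~\ref{lem:weak_conv_01} entirely and makes the identification of the limit constructive rather than delegating it to a quoted result. Both routes rest on the same ingredients (the pointwise convergences, the $L^\infty$ bounds, and the weak convergence of the oscillating characteristic functions), so the difference is one of packaging; your version is self-contained, the paper's is shorter. Two cosmetic remarks: the weak $L^2(\Omega_T)$ convergence $\chi_c(\cdot/\varepsilon)\rightharpoonup\chi_c^*$ needed in the second term of your splitting is what Lemma~\ref{cio_lemma_ii} (with $\varphi_1\equiv 1$) supplies, rather than Lemma~\ref{cio_lemma_i}; and your duality/integration-by-parts identification of $\delta$ and $\gamma$ is exactly the step the paper leaves implicit, so spelling it out (together with the single diagonal extraction of a common subsequence) is a welcome clarification rather than a deviation.
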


\begin{proof}
From \eqref{conv:ae_p} and \eqref{conv:ae_r}
we have
$\phi_{c}({r}^{\varepsilon})  \mathcal{S}({p}^{\varepsilon})
\rightarrow
\phi_{c}(r) \mathcal{S}({p})$
almost everywhere in $\Omega_T$.
Using Lemma~\ref{cio_lemma_ii} (setting $\varphi_{1}(x) \equiv 1$)
we also have
$\chi_{c}({x/\varepsilon}) \rightharpoonup  \chi_{c}^*$ weakly in  $L^2({\Omega_T})$.
Taking into account \eqref{con11a} and \eqref{bound_phi},
\eqref{conv:101} can be shown using Lemma~\ref{lem:weak_conv_01}.
Similar arguments apply to \eqref{conv:101b}, \eqref{conv:101c} and \eqref{conv:103}.
Now, \eqref{conv:102c} and \eqref{conv:104} follows from \eqref{est_49} and \eqref{est_50},
recall also \eqref{conv:weak_der_01} and \eqref{limit_der_heat}.
%
\hfill $\square$
\end{proof}

Taking into account the convergence results \eqref{conv:weak_u}--\eqref{limit_der_heat}
together with Lemma~\ref{lem:cinv:sup01} we prove the following theorem
which is the main result of this section.
\begin{theorem}\label{theorem:hom_limit_system}
There exist a function ${p}_{1} \in L^{2}(\Omega_T; W^{1,2}_{per}(\mathcal{Y}))$
and a subsequence ${p}^{\varepsilon}$ (still denoted by ${p}^{\varepsilon}$)
such that
\begin{equation*}
\nabla {p}^{\varepsilon}
\rightarrow
\nabla_x {p} + \nabla_{y} {p}_{1}(x,y,t)
\quad
\textrm{in the two-scale sense}.
\end{equation*}
Similarly,
there exist a function ${{\vartheta}_1} \in L^{2}(\Omega_T; W^{1,2}_{per}({\mathcal{Y}}))$
and a subsequence ${\vartheta}^{\varepsilon}$ (still denoted by ${\vartheta}^{\varepsilon}$)
such that
\begin{equation*}
\nabla {\vartheta}^{\varepsilon}
\rightarrow
\nabla_x {\vartheta} + \nabla_{y} {{\vartheta}_1}(x,y,t)
\quad
\textrm{in the two-scale sense}.
\end{equation*}
Further,
the pairs $({p},{p}_{1})$ and $({\vartheta},{\vartheta}_{1})$
and the function $r$
satisfy the following two-scale homogenized coupled problem
\begin{align}
\label{weak_form_101}
&
\int_{0}^{T}
\langle
\int_{\mathcal{Y}}
\partial_t   {b}({y},{p},{r})   {\rm d}y,
{\varphi}
\rangle
{\rm d}t
\nonumber
\\
&
+
\int_{\Omega_T}
\int_{\mathcal{Y}}
{a}({y},{p},\vartheta,{r})
\left(
\nabla_x {p} + \nabla_y {p}_{1}(x,y,t)
\right)
\cdot
\left(
\nabla_x {\varphi} + \nabla_y {{\varphi}_1}(x,y,t)
\right)
\,{\rm d}y {\rm d}x {\rm d}t
\nonumber
\\
&
+
\int_{\partial\Omega_T}
\beta_{e}
{p}   \varphi
\,
{\rm d}{S} {\rm d}t
\nonumber
\\
=
&
\;
\int_{\Omega_T}
\int_{\mathcal{Y}}
\alpha_{1}  \chi_{c}({y})  f( {p},\vartheta,{r} )
\varphi
\;
{\rm d}y {\rm d}x {\rm d}t
+
\int_{\partial\Omega_T}
\beta_{e}
{p}_{\infty}   \varphi
\,
{\rm d}{S} {\rm d}t
\end{align}
and
\begin{align}\label{weak_form_102}
&
\int_{0}^{T}
\langle
\int_{\mathcal{Y}}
\partial_t
\left[
c_{w} {b}({y},{p},{r})  {\vartheta}
+
\sigma\left( {y} , {r} \right)  {\vartheta}
\right]
{\rm d}y,
\psi
\rangle
{\rm d}t
\nonumber
\\
&
+
\int_{\Omega_T}
\int_{\mathcal{Y}}
\lambda({y},{p},\vartheta,{r})
\left(
\nabla_x {\vartheta} + \nabla_y {{\vartheta}_1}(x,y,t)
\right)
\cdot
\left( \nabla_x \psi + \nabla_y {\psi_1}(x,y,t) \right)
\,{\rm d}y{\rm d}x{\rm d}t
\nonumber
\\
&
+
\int_{\Omega_T}
\int_{\mathcal{Y}}
c_{w}
{\vartheta}\,
{a}({y},{p},\vartheta,{r})
\left(
\nabla_x {p} + \nabla_y {p}_{1}(x,y,t)
\right)
\cdot
\left( \nabla_x \psi + \nabla_y {\psi_1}(x,y,t) \right)
\,{\rm d}y{\rm d}x{\rm d}t
\nonumber
\\
&
+
\int_{\partial\Omega_T}
\alpha_{e} \vartheta  \psi
\,
{\rm d}{S} {\rm d}t
+
c_{w}
\int_{\partial\Omega_T}
\beta_{e} \vartheta
(
{p}
-
{p}_{\infty}
) \psi
\,
{\rm d}{S} {\rm d}t
\nonumber
\\
=
&
\;
\int_{\Omega_T}
\int_{\mathcal{Y}}
\alpha_{2}  \chi_{c}({y})  f( {p},\vartheta,{r} )
\psi
\;
{\rm d}y {\rm d}x {\rm d}t
+
\int_{\partial\Omega_T}
\alpha_{e} \vartheta_{\infty}  \psi
\,
{\rm d}{S} {\rm d}t
\end{align}
for all test functions
${\varphi},\psi\in  C^{\infty}({\Omega}_T)$
and
${{\varphi}_1},{\psi_1} \in C_0^{\infty}(\Omega_T;C^{\infty}_{per}({\mathcal{Y}}))$
and
\begin{equation}\label{eq:memory_limit}
{r}(x,t)
=
\int_0^t
{f}({p}(x,s),\vartheta(x,s),{r}(x,s))
{\rm d}s.
\end{equation}
\end{theorem}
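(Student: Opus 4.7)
The overall strategy is the classical two-scale homogenization programme applied to the weak formulation of Definition~\ref{def:weak_form}. The a priori estimates of Subsection~\ref{subsec:a_priori_estimates} give uniform control of $p^{\varepsilon}$, $\vartheta^{\varepsilon}$ and $r^{\varepsilon}$ in $L^{2}(0,T;W^{1,2}(\Omega))\cap L^{\infty}(\Omega_{T})$, together with the time-translation bounds \eqref{est:compact_u_00}, \eqref{est:compactness_theta}, \eqref{eq:a_priori_r_02}. Applying Theorem~\ref{thm:two_scale_aux} slice-wise in $t$ (using the fact that $p^{\varepsilon},\vartheta^{\varepsilon}$ are bounded in $L^{2}(0,T;W^{1,2}(\Omega))$) produces two correctors $p_{1},\vartheta_{1}\in L^{2}(\Omega_{T};W^{1,2}_{per}(\mathcal{Y}))$ such that, up to a further subsequence, the two-scale convergences stated in the theorem hold.

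Next I would test \eqref{eq:defn_weak_03} and \eqref{eq:defn_weak_04} against the oscillating functions
\[
\varphi^{\varepsilon}(x,t)=\varphi(x,t)+\varepsilon\,\varphi_{1}\!\left(x,\tfrac{x}{\varepsilon},t\right),\qquad
\psi^{\varepsilon}(x,t)=\psi(x,t)+\varepsilon\,\psi_{1}\!\left(x,\tfrac{x}{\varepsilon},t\right),
\]
with $\varphi,\psi\in C^{\infty}(\overline{\Omega_{T}})$ and $\varphi_{1},\psi_{1}\in C_{0}^{\infty}(\Omega_{T};C^{\infty}_{per}(\mathcal{Y}))$. Since $\nabla\varphi^{\varepsilon}=\nabla_{x}\varphi+\nabla_{y}\varphi_{1}(x,x/\varepsilon,t)+O(\varepsilon)$ strongly in $L^{2}(\Omega_{T})$ and analogously for $\psi^{\varepsilon}$, the time-derivative terms pass to the limit directly thanks to Lemma~\ref{lem:cinv:sup01} (in particular \eqref{conv:102c} and \eqref{conv:104}), producing the terms with $\int_{\mathcal{Y}}\partial_{t}b(y,p,r)\,dy$ and $\int_{\mathcal{Y}}\partial_{t}[c_{w}b(y,p,r)\vartheta+\sigma(y,r)\vartheta]\,dy$ in \eqref{weak_form_101}--\eqref{weak_form_102}. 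The source terms $\alpha_{i}\chi_{c}(x/\varepsilon)f(p^{\varepsilon},\vartheta^{\varepsilon},r^{\varepsilon})$ converge by dominated convergence combined with the weak-$\ast$ limit $\chi_{c}(x/\varepsilon)\rightharpoonup\chi_{c}^{\ast}$ (Lemma~\ref{cio_lemma_i}), using the pointwise convergences \eqref{conv:ae_p}, \eqref{conv:ae_theta}, \eqref{conv:ae_r}, the Lipschitz continuity of $f$ and the bound \eqref{assum:bound_f}. The boundary integrals do not involve the fast variable and pass to the limit using \eqref{conv_23bb}, the weak convergence of the traces, and the uniform bounds \eqref{max_principle} and \eqref{bound_p}.

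The main obstacle lies in the nonlinear flux terms
\[
\int_{\Omega_{T}} a(x/\varepsilon,p^{\varepsilon},\vartheta^{\varepsilon},r^{\varepsilon})\,\nabla p^{\varepsilon}\cdot\nabla\varphi^{\varepsilon}\,dx\,dt,\qquad
\int_{\Omega_{T}} \lambda(x/\varepsilon,p^{\varepsilon},\vartheta^{\varepsilon},r^{\varepsilon})\,\nabla\vartheta^{\varepsilon}\cdot\nabla\psi^{\varepsilon}\,dx\,dt,
\]
and in the convective term $\int_{\Omega_{T}}c_{w}\vartheta^{\varepsilon}a(x/\varepsilon,\cdots)\nabla p^{\varepsilon}\cdot\nabla\psi^{\varepsilon}\,dx\,dt$. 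The key observation is that, in view of the structural form \eqref{vector_flux_water}--\eqref{vector_flux_heat}, the continuity and boundedness assumptions (ii), and the almost-everywhere convergences \eqref{conv:ae_p}, \eqref{conv:ae_theta}, \eqref{conv:ae_r}, the coefficient $a(x/\varepsilon,p^{\varepsilon},\vartheta^{\varepsilon},r^{\varepsilon})$ differs from the admissible test function $a(x/\varepsilon,p(x,t),\vartheta(x,t),r(x,t))$ by a sequence that converges to zero in $L^{q}(\Omega_{T})$ for every $q<\infty$ (by Vitali's theorem and the $L^{\infty}$-bounds). Consequently, it may be used as a test function in the two-scale convergence of $\nabla p^{\varepsilon}\to \nabla_{x}p+\nabla_{y}p_{1}$, and one obtains
\[
\int_{\Omega_{T}} a(x/\varepsilon,p^{\varepsilon},\vartheta^{\varepsilon},r^{\varepsilon})\nabla p^{\varepsilon}\cdot\nabla\varphi^{\varepsilon}\,dx\,dt \longrightarrow \int_{\Omega_{T}}\!\!\int_{\mathcal{Y}} a(y,p,\vartheta,r)(\nabla_{x}p+\nabla_{y}p_{1})\cdot(\nabla_{x}\varphi+\nabla_{y}\varphi_{1})\,dy\,dx\,dt,
\]
which is exactly the term appearing in \eqref{weak_form_101}. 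The term with $\lambda$ is treated identically, while the convective contribution uses additionally $\vartheta^{\varepsilon}\to\vartheta$ almost everywhere together with the $L^{\infty}$-bound \eqref{max_principle} to pass inside the integral.

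Finally, the memory identity \eqref{eq:memory_limit} is obtained by pointwise passage to the limit in \eqref{eq:memory_weak_form}: Lipschitz continuity of $f$, together with \eqref{conv:ae_p}, \eqref{conv:ae_theta}, \eqref{conv:ae_r} and the uniform bound \eqref{assum:bound_f}, permits Lebesgue's dominated convergence inside the Duhamel integral. A density argument in the test functions $\varphi,\varphi_{1},\psi,\psi_{1}$ then extends \eqref{weak_form_101}--\eqref{weak_form_102} to the full admissible class. I expect the step requiring the most care to be the justification that $a(x/\varepsilon,p^{\varepsilon},\vartheta^{\varepsilon},r^{\varepsilon})$ is an admissible two-scale test coefficient; here the degeneracy of $k_{R}$ is circumvented precisely by the $L^{\infty}$-bounds \eqref{max_principle}--\eqref{bound_p} derived in Theorem~\ref{thm:existence_theorem_epsilon}, so that the coefficient remains uniformly strictly positive along the sequence.
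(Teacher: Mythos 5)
Your proposal is correct and follows essentially the same route as the paper: oscillating test functions $\varphi+\varepsilon\varphi_1(x,x/\varepsilon,t)$, the limits \eqref{conv:102c} and \eqref{conv:104} for the time-derivative terms, and passage in the flux terms by combining the strong $L^q$-convergence of the solution-dependent scalar coefficients (from the a.e.\ convergences and the $L^\infty$-bounds) with the two-scale convergence of the gradients tested against $\chi_c(x/\varepsilon)\nabla\varphi^{\varepsilon}$. The only cosmetic difference is that the paper keeps the oscillation $\chi_{c}(x/\varepsilon)$ explicitly inside the admissible test function (via Remark~\ref{rem:two_scale_conv}) rather than calling $a(x/\varepsilon,p,\vartheta,r)$ itself a test function, but the mechanism you describe is the same.
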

\begin{proof}
Let ${\varphi},\psi \in C^{\infty}({\Omega}_T)$
and ${{\varphi}_1},{\psi_1} \in C_0^{\infty}(\Omega_T;C^{\infty}_{per}({\mathcal{Y}}))$.
We take test functions as
\begin{equation}\label{test_phi}
{\varphi}^{\varepsilon} = {\varphi}(x,t) + \varepsilon {{\varphi}_1}( x,{x/\varepsilon},t)
\end{equation}
and
\begin{equation}\label{test_psi}
\psi^{\varepsilon} =  \psi(x,t) + \varepsilon {\psi_1}(x,{x/\varepsilon},t),
\end{equation}
respectively, in \eqref{eq:defn_weak_03} and \eqref{eq:defn_weak_04}.
Note that, by \eqref{conv:102c} and the strong convergence of ${\varphi}^{\varepsilon}$
to ${\varphi}$ in $L^p({\Omega_T})$ we deduce
\begin{equation}\label{conv:201}
\int_{0}^{T}
\langle
\partial_t
{b}({x/\varepsilon},{p}^{\varepsilon},{r}^{\varepsilon}),
{\varphi}^{\varepsilon}
\rangle
{\rm d}t
\rightarrow
\int_{0}^{T}
\langle
\partial_t
{b}^{*}({p},{r}),
{\varphi}
\rangle
{\rm d}t.
\end{equation}
Similarly, by \eqref{conv:104} we have
\begin{equation}\label{conv:202}
\int_{0}^{T}
\langle
\partial_t  \left[
c_{w}{b}({x/\varepsilon},{p}^{\varepsilon},{r}^{\varepsilon})  {\vartheta}^{\varepsilon}
+
\sigma\left( {x/\varepsilon}  , {r}^{\varepsilon} \right)   {\vartheta}^{\varepsilon}
\right],
\psi^{\varepsilon}
\rangle
\,{\rm d}t
\rightarrow
\int_{0}^{T}
\langle
\partial_t
\left[
c_{w}{b}^{*}({p},{r}) {\vartheta} +  \sigma^*(r)  {\vartheta}
\right],
\psi
\rangle
{\rm d}t.
\end{equation}
Further, using \eqref{con11a}--\eqref{cond:viscosity},
\eqref{conv:weak_u} and \eqref{conv:ae_theta} we deduce
\begin{equation*}
\varrho_{w}\frac{ {k}_{c}({r}^{\varepsilon}){k}_{R}(\mathcal{S}({p}^{\varepsilon}))  }{\mu({\vartheta}^{\varepsilon})}
\nabla {p}^{\varepsilon}
\rightharpoonup
\varrho_{w}\frac{ {k}_{c}({r}){k}_{R}(\mathcal{S}({p}))  }{\mu({\vartheta})}
\nabla {p}
\qquad
\textrm{weakly in } L^{2}({\Omega_T})^{2}
\end{equation*}
and
\begin{equation*}
\varrho_{w}\frac{ {k}_{a} {k}_{R}(\mathcal{S}({p}^{\varepsilon}))  }{\mu({\vartheta}^{\varepsilon})}
 \nabla {p}^{\varepsilon}
\rightharpoonup
\varrho_{w}\frac{ {k}_{a} {k}_{R}(\mathcal{S}({p}))  }{\mu({\vartheta})}
\nabla {p}
\qquad
\textrm{weakly in } L^{2}({\Omega_T})^{2}.
\end{equation*}
Indeed, ${\chi_{c}(y)} \in L^{\infty}({\mathcal{Y}})$
and
$\nabla {\varphi}^{\varepsilon} \in L^{2}_{per}(\mathcal{Y};C(\overline{\Omega}))$
so that $\chi_{c}({x/\varepsilon})\nabla{\varphi}^{\varepsilon}$
can be used as a test function in the two-scale convergence (see Remark~\ref{rem:two_scale_conv}),
namely,
\begin{align}\label{conv:204}
&
\varrho_{w}
\int_{\Omega_T}
\chi_{c}({x/\varepsilon})
\frac{ {k}_{c}({r}^{\varepsilon}){k}_{R}(\mathcal{S}({p}^{\varepsilon}))  }{\mu({\vartheta}^{\varepsilon})}
\nabla {p}^{\varepsilon}
\cdot
\nabla{\varphi}^{\varepsilon}
{\rm d}x {\rm d}t
=
\varrho_{w}
\int_{\Omega_T}
\frac{ {k}_{c}({r}^{\varepsilon}){k}_{R}(\mathcal{S}({p}^{\varepsilon}))  }{\mu({\vartheta}^{\varepsilon})}
\nabla {p}^{\varepsilon}
\cdot
\left[ \chi_{c}({x/\varepsilon})  \nabla{\varphi}^{\varepsilon}\right]
{\rm d}x {\rm d}t
\nonumber
\\
&
\rightarrow
\varrho_{w}
\int_{\Omega_T}
\int_{\mathcal{Y}}
\chi_{c}(y)
\frac{ {k}_{c}({r}){k}_{R}(\mathcal{S}({p}))  }{\mu({\vartheta})}
\left(
\nabla_x {p} + \nabla_y {p}_{1}(x,y,t)
\right)
\cdot
\left( \nabla_x {\varphi} + \nabla_y {{\varphi}_1}(x,y,t) \right)
{\rm d}y{\rm d}x {\rm d}t
\end{align}
and, similarly,
\begin{align}\label{conv:205}
&
\varrho_{w}
\int_{\Omega_T}
\chi_{a}({x/\varepsilon})
\frac{ {k}_{a} {k}_{R}(\mathcal{S}({p}^{\varepsilon}))  }{\mu({\vartheta}^{\varepsilon})}
\nabla {p}^{\varepsilon}
\cdot
\nabla{\varphi}^{\varepsilon}
{\rm d}x {\rm d}t
=
\varrho_{w}
\int_{\Omega_T}
\frac{ {k}_{a} {k}_{R}(\mathcal{S}({p}^{\varepsilon}))  }{\mu({\vartheta}^{\varepsilon})}
\nabla {p}^{\varepsilon}
\cdot
\left[ \chi_{a}({x/\varepsilon})  \nabla{\varphi}^{\varepsilon}\right]
{\rm d}x {\rm d}t
\nonumber
\\
&
\rightarrow
\varrho_{w}
\int_{\Omega_T}
\int_{\mathcal{Y}}
\chi_{a}(y)
\frac{ {k}_{a} {k}_{R}(\mathcal{S}({p}))  }{\mu({\vartheta})}
\left(
\nabla_x {p} + \nabla_y {p}_{1}(x,y,t)
\right)
\cdot
\left( \nabla_x {\varphi} + \nabla_y {{\varphi}_1}(x,y,t) \right)
{\rm d}y{\rm d}x {\rm d}t.
\end{align}
In the same manner we conclude that
\begin{align}\label{conv:206}
&
\int_{\Omega_T}
\lambda({x/\varepsilon},{p}^{\varepsilon},\vartheta^{\varepsilon},{r}^{\varepsilon})
\nabla {\vartheta}^{\varepsilon}
\cdot
\nabla\psi^{\varepsilon}
\,{\rm d}x {\rm d}t
\nonumber
\\
&
\rightarrow
\int_{\Omega_T}
\int_{\mathcal{Y}}
\lambda({y},{p},\vartheta,{r})
\left(
\nabla_x {\vartheta} + \nabla_y {{\vartheta}_1}(x,y,t)
\right)
\cdot
\left( \nabla_x \psi + \nabla_y {\psi_1}(x,y,t) \right)
\,{\rm d}y{\rm d}x{\rm d}t
\end{align}
and
\begin{align}\label{conv:207}
&
\int_{\Omega_T}
c_{w}
\vartheta^{\varepsilon}
{a}({x/\varepsilon},{p}^{\varepsilon},\vartheta^{\varepsilon},{r}^{\varepsilon})
\nabla {p}^{\varepsilon}
\cdot \nabla\psi^{\varepsilon}
\,{\rm d}x {\rm d}t
\nonumber
\\
&
\rightarrow
\int_{\Omega_T}
\int_{\mathcal{Y}}
c_{w}
{\vartheta}
\,
{a}({y},{p},\vartheta,{r})
\left(
\nabla_x {p} + \nabla_y {p}_{1}(x,y,t)
\right)
\cdot
\left( \nabla_x \psi + \nabla_y {\psi_1}(x,y,t) \right)
\,{\rm d}y{\rm d}x{\rm d}t
\end{align}
and finally
\begin{equation}\label{conv:208}
\int_{\Omega_T}
\chi_{c}({x/\varepsilon})
f({p}^{\varepsilon},\vartheta^{\varepsilon},{r}^{\varepsilon})
\varphi^{\varepsilon}
\;
{\rm d}x {\rm d}t
\rightarrow
\int_{\Omega_T}
\int_{\mathcal{Y}}
 \chi_{c}({y})
f( {p},\vartheta,{r} )
\varphi
\;
{\rm d}y {\rm d}x {\rm d}t.
\end{equation}
Thus, choosing \eqref{test_phi} and \eqref{test_psi}, respectively,
as test functions in \eqref{eq:defn_weak_03} and \eqref{eq:defn_weak_04}
the above convergences \eqref{conv:201}--\eqref{conv:202}
and
\eqref{conv:204}--\eqref{conv:208}
are sufficient for taking the limit $\varepsilon_j \rightarrow 0$
as $j \rightarrow +\infty$
(along a selected subsequence) to get \eqref{weak_form_101}--\eqref{weak_form_102}.
The proof of Theorem \ref{theorem:hom_limit_system} is complete.
\hfill$\square$
\end{proof}

\subsection{The homogenized system}
\label{subsec:homogenized_model}
In this section we complete the proof of Theorem~\ref{thm:main_result}.
We will eliminate the variables ${p}_{1}$ and $\vartheta_{1}$
from \eqref{weak_form_101} and \eqref{weak_form_102} to obtain the closed system
for the remaining unknowns.
This can be handled in the following way.
Setting ${{\varphi}} = 0$ in \eqref{weak_form_101} we can write
\begin{equation}\label{eq:homogenization_01}
\int_{\Omega_T}
\int_{\mathcal{Y}}
{a}({y},{p},\vartheta,{r})
\left(
\nabla_x {p} + \nabla_y {p}_{1}(x,y,t)
\right)
\cdot
\nabla_y {{\varphi}_1}(x,y,t)
\,{\rm d}y {\rm d}x {\rm d}t
=
0
\end{equation}
for all ${{\varphi}_1} \in C_0^{\infty}(\Omega_T;C^{\infty}_{per}({\mathcal{Y}}))$.
Here we determine ${p}_{1}$ (up to a constant) as
\begin{equation}\label{eq:u_1}
{p}_{1}(x,y,t) = \nabla_x {p}(x,t) \cdot \bfw(x,y,t),
\quad
\bfw(x,y,t) = (w_1,w_2),
\end{equation}
where $\bfw(x,y,t)$ can be obtained as follows.
Combining \eqref{eq:homogenization_01} and \eqref{eq:u_1} we get
\begin{equation}\label{}
\int_{\Omega_T}
\int_{\mathcal{Y}}
{a}({y},{p},\vartheta,{r})
\left(
\nabla_x {p}
+
\nabla_y \left[  \nabla_x {p}(x,t) \cdot \bfw(x,y,t)   \right]
\right)
\cdot
\nabla_y {{\varphi}_1}(x,y,t)
\,{\rm d}y {\rm d}x {\rm d}t
=
0
\end{equation}
for all ${{\varphi}_1} \in C_0^{\infty}(\Omega_T;C^{\infty}_{per}({\mathcal{Y}}))$.
The above equation may be rewritten as
\begin{equation}\label{eq:homogenization_02}
\int_{\Omega_T}
\int_{\mathcal{Y}}
{a}({y},{p},\vartheta,{r})
\nabla_x {p}
\left(
{\bfI_d} + \nabla_y \bfw(x,y,t)
\right)
\cdot
\nabla_y {{\varphi}_1}(x,y,t)
\,{\rm d}y{\rm d}x{\rm d}t
=0
\end{equation}
for all ${{\varphi}_1} \in C_0^{\infty}(\Omega_T;C^{\infty}_{per}({\mathcal{Y}}))$.
Here and in what follows, $\nabla_y\bfw$ is the matrix
$(\nabla_y\bfw)_{ij} = \partial {w}_{j}/\partial {y}_{i}$.
Integrating by parts in \eqref{eq:homogenization_02} we deduce
\begin{equation*}
-\int_{\Omega_T}
\int_{\mathcal{Y}}
{\nabla_y
\cdot
\left[
{a}({y},{p},\vartheta,{r})
\nabla_x {p}
\left(
{\bfI_d} + \nabla_y\bfw(x,y,t)
\right)
\right]}
{{\varphi}_1}(x,y,t)
\,{\rm d}y{\rm d}x{\rm d}t
=
0.
\end{equation*}
From this, it is easy to verify that
$\bfw$ satisfies the periodic cell problem
\eqref{eq:cell_problem_w}.

Now let ${{\varphi}_1}=0$  in \eqref{weak_form_101}. Then we have
\begin{align}
\label{}
&
\int_{0}^{T}
\langle
\int_{\mathcal{Y}}
\partial_t   {b}({y},{p},{r})    {\rm d}y,
{\varphi}
\rangle
{\rm d}t
\nonumber
\\
&
+
\int_{\Omega_T}
\int_{\mathcal{Y}}
{a}({y},{p},\vartheta,{r})
\left(
\nabla_x {p} + \nabla_y {p}_{1}(x,y,t)
\right)
\cdot
\nabla_x {\varphi}
\,{\rm d}y {\rm d}x {\rm d}t
\nonumber
\\
&
+
\int_{\partial\Omega_T}
\beta_{e}
{p}   \varphi
\,
{\rm d}{S} {\rm d}t
\nonumber
\\
=
&
\;
\int_{\Omega_T}
\int_{\mathcal{Y}}
\alpha_{1}  \chi_{c}({y})  f( {p},\vartheta,{r} )
\varphi
\;
{\rm d}y {\rm d}x {\rm d}t
+
\int_{\partial\Omega_T}
\beta_{e}
{p}_{\infty}   \varphi
\,
{\rm d}{S} {\rm d}t
\end{align}
and taking into account \eqref{eq:u_1} we can write
\begin{align}
\label{eq:weak_form_u_proof}
&
\int_{0}^{T}
\langle
\partial_t
\int_{\mathcal{Y}}
{b}({y},{p},{r})
 {\rm d}y,
{\varphi}
\rangle
{\rm d}t
\nonumber
\\
&
+
\int_{\Omega_T}
\nabla_x {p} \;
\left(
\int_{\mathcal{Y}}
{a}({y},{p},\vartheta,{r})
\left( {\bfI_d} + \nabla_y \bfw(x,y,t) \right)
{\rm d}y
\right)
\cdot
\nabla_x {\varphi}
\, {\rm d}x {\rm d}t
\nonumber
\\
&
+
\int_{\partial\Omega_T}
\beta_{e}
{p}   \varphi
\,
{\rm d}{S} {\rm d}t
\nonumber
\\
&
=
\alpha_{1}
\int_{\mathcal{Y}}
\chi_{c}({y})
{\rm d}y
\int_{\Omega_T}
f( {p},\vartheta,{r} )
\varphi
\;
{\rm d}x {\rm d}t
+
\int_{\partial\Omega_T}
\beta_{e}
{p}_{\infty}   \varphi
\,
{\rm d}{S} {\rm d}t
\end{align}
for all ${\varphi} \in C^{\infty}({\Omega}_T)$.
Note that \eqref{eq:weak_form_u_proof} represents the weak form
of the problem \eqref{eq:hom_01} and \eqref{eq:hom_03}.

Let $\psi=0$ in \eqref{weak_form_102}, we get
\begin{align}\label{eq:201}
&
\int_{\Omega_T}
\int_{\mathcal{Y}}
{\lambda}({y},{p},\vartheta,{r})
\left(
\nabla_x {\vartheta} + \nabla_y {{\vartheta}_1}(x,y,t)
\right)
\cdot
\nabla_y {\psi_1}(x,y,t)
\,{\rm d}y{\rm d}x{\rm d}t
\nonumber
\\
&
+
\int_{\Omega_T}
\int_{\mathcal{Y}}
c_{w}
{\vartheta}\,
{a}({y},{p},\vartheta,{r})
\left(
\nabla_x {p} + \nabla_y {p}_{1}(x,y,t)
\right)
\cdot
\nabla_y {\psi_1}(x,y,t)
\,{\rm d}y{\rm d}x{\rm d}t
\nonumber
\\
&
=
0
\end{align}
for all ${\psi_1} \in C_0^{\infty}(\Omega_T;C^{\infty}_{per}({\mathcal{Y}}))$.
Here, ${\vartheta}_1$ can be determined as (up to a constant)
\begin{equation}\label{eq:Upsilon}
{{\vartheta}_1}(x,y,t) = \nabla_x {\vartheta}(x,t) \cdot \bfv(x,y,t),
\quad
\bfv = ({v}_{1},{v}_{2}),
\end{equation}
which yields, substituting \eqref{eq:u_1} and \eqref{eq:Upsilon} into \eqref{eq:201},
\begin{align}\label{eq:301}
&
\int_{\Omega_T}
\int_{\mathcal{Y}}
{\lambda}({y},{p},\vartheta,{r})
\nabla_x {\vartheta}\left[
{\bfI_d}
+
\nabla_y
\bfv(x,y,t)
\right]
\cdot
\nabla_y {\psi_1}(x,y,t)
\,{\rm d}y{\rm d}x{\rm d}t
\nonumber
\\
&
+
\int_{\Omega_T}
c_{w}
{\vartheta}\,
\int_{\mathcal{Y}}
{a}({y},{p},\vartheta,{r})
\nabla_x {p}
\left[
{\bfI_d} + \nabla_y \bfw(x,y,t)
\right]
\cdot
\nabla_y {\psi_1}(x,y,t)
\,{\rm d}y
{\rm d}x {\rm d}t
\nonumber
\\
&
=
0
\end{align}
for all ${\psi_1} \in C_0^{\infty}(\Omega_T;C^{\infty}_{per}({\mathcal{Y}}))$.
Here, $\nabla_y \bfv$ is the matrix
$(\nabla_y \bfv)_{ij} = \partial {v}_{j}/\partial {y}_{i}$.
In view of \eqref{eq:homogenization_02},
the second integral on the left hand side in \eqref{eq:301} vanishes.
Hence,
${v}_{i}$ are obtained
as the unique solutions of the periodic cell problems
\eqref{eq:cell_problem_v}.

Now let ${\psi_1}=0$ in \eqref{weak_form_102}. This leads to
\begin{align*}
&
\int_{0}^{T}
\langle
\int_{\mathcal{Y}}
\partial_t
\left[
c_{w} {b}({y},{p},{r})   {\vartheta}
+
\sigma\left( {y} , {r} \right)  {\vartheta}
\right]
{\rm d}y,
\psi
\rangle
{\rm d}t
\nonumber
\\
&
+
\int_{\Omega_T}
\int_{\mathcal{Y}}
{\lambda}({y},{p},\vartheta,{r})
\left(
\nabla_x {\vartheta} + \nabla_y {{\vartheta}_1}(x,y,t)
\right)
\cdot
\nabla_x \psi
\,{\rm d}y{\rm d}x{\rm d}t
\nonumber
\\
&
+
\int_{\Omega_T}
\int_{\mathcal{Y}}
c_{w}
{\vartheta}\,
{a}({y},{p},\vartheta,{r})
\left(
\nabla_x {p} + \nabla_y {p}_{1}(x,y,t)
\right)
\cdot
\nabla_x \psi
\,{\rm d}y{\rm d}x{\rm d}t
\nonumber
\\
&
+
\int_{\partial\Omega_T}
\alpha_{e} \vartheta  \psi
\,
{\rm d}{S} {\rm d}t
+
c_{w}
\int_{\partial\Omega_T}
\beta_{e} \vartheta
(
{p}
-
{p}_{\infty}
) \psi
\,
{\rm d}{S} {\rm d}t
\nonumber
\\
=
&
\;
\int_{\Omega_T}
\int_{\mathcal{Y}}
\alpha_{2}
 \chi_{c}({y})
 f( {p},\vartheta,{r} )
\psi
\;
{\rm d}y {\rm d}x {\rm d}t
+
\int_{\partial\Omega_T}
\alpha_{e} \vartheta_{\infty}  \psi
\,
{\rm d}{S} {\rm d}t
\end{align*}
for all $\psi \in C^{\infty}({\Omega}_T)$
and, using \eqref{eq:u_1} and \eqref{eq:Upsilon}, we can write
\begin{align}\label{eq:weak_form_theta_proof}
&
\int_{0}^{T}
\langle
\partial_t
\left[
c_{w}
\int_{\mathcal{Y}} {b}({y},{p},{r})     {\rm d}y \,   {\vartheta}
+
\int_{\mathcal{Y}} \sigma\left( {y} , {r} \right) {\rm d}y \, {\vartheta}
\right],
\psi
\rangle
{\rm d}t
\nonumber
\\
&
+
\int_{\Omega_T}
\nabla_x {\vartheta}
\left(
\int_{\mathcal{Y}}
{\lambda}({y},{p},\vartheta,{r})
\left(
{\bfI_d} + \nabla_y \bfv(x,y,t)
\right)
{\rm d}y
\right)
\cdot
\nabla_x \psi
\,{\rm d}x{\rm d}t
\nonumber
\\
&
+
\int_{\Omega_T}
c_{w}
{\vartheta}
\,
\nabla_x {p}
\left(
\int_{\mathcal{Y}}
{a}({y},{p},\vartheta,{r})
\left(
{\bfI_d}  + \nabla_y \bfw(x,y,t)
\right)
{\rm d}y
\right)
\cdot
\nabla_x \psi
\,{\rm d}x {\rm d}t
\nonumber
\\
&
+
\int_{\partial\Omega_T}
\alpha_{e} \vartheta  \psi
\,
{\rm d}{S} {\rm d}t
+
c_{w}
\int_{\partial\Omega_T}
\beta_{e} \vartheta
(
{p}
-
{p}_{\infty}
) \psi
\,
{\rm d}{S} {\rm d}t
\nonumber
\\
=
&
\;
\alpha_{2}
\int_{\mathcal{Y}}
 \chi_{c}({y})
{\rm d}y
\int_{\Omega_T}f( {p},\vartheta,{r} )
\psi
\;
{\rm d}x {\rm d}t
+
\int_{\partial\Omega_T}
\alpha_{e} \vartheta_{\infty}  \psi
\,
{\rm d}{S} {\rm d}t
.
\end{align}
We conclude that \eqref{eq:weak_form_theta_proof} represents the weak formulation
of the problem~\eqref{eq:hom_02} and \eqref{eq:hom_04}.
This completes the main result of the paper.


\appendix

\section{The existence of the weak solution to \eqref{eq1a}--\eqref{eq1g}}
\label{proof_1}

\subsection{Approximations}\label{sec:approximations}
Applying the method of discretization in time, we divide the interval
$[0,T]$ into $n$ subintervals of lengths  ${h}:= T/n$ (a time step),
replace the time derivatives
by the corresponding difference quotients
and the integral in \eqref{eq:memory_weak_form} by a sum.
In this way,
we approximate the problem \eqref{eq1a}--\eqref{eq1g}
by a semi-implicit time discretization scheme
and re-formulate the problem in a weak sense.

Let us consider
$p^0_{n} := p_{0}$,
$\vartheta^{0}_{n} := \vartheta_{0}$
and
$r^0_{n} := {0}$ a.e. on $\Omega$.
We now define,
in each time step $i=1,\dots,n$,
a threesome
$[{p}^{i}_{n},\vartheta^{i}_{n},{r}^{i}_{n}]$
as a solution of the
following recurrence steady problem:
for a given threesome
$[p^{i-1}_{n},\vartheta^{i-1}_{n},r^{i-1}_{n}]$,
$i=1,2,\dots,n$,
${p}^{i-1}_{n} \in  L^{\infty}(\Omega)$,
$\vartheta^{i-1}_{n} \in  W^{1,2}(\Omega) \cap L^{\infty}(\Omega)$
and
${r}^{i-1}_{n} \in  W^{1,2}(\Omega) \cap L^{\infty}(\Omega)$,
find  $[p^{i}_{n},\vartheta^{i}_{n},r^{i}_{n}]$,
such that
${p}^{i}_{n} \in  {W}^{1,2}(\Omega) \cap L^{\infty}(\Omega)$,
$\vartheta^{i}_{n} \in  {W}^{1,2}(\Omega) \cap L^{\infty}(\Omega)$,
${r}^{i}_{n} \in  {W}^{1,2}(\Omega) \cap L^{\infty}(\Omega)$
and
\begin{align}
\label{approximate_problem_01}
&
\varrho_{w}
\int_{\Omega}
\chi^{\varepsilon}_{c}
\frac{ \phi_{c}({r}^{i}_{n})\mathcal{S}({p}^{i}_{n}) - \phi_{c}({r}^{i-1}_{n})\mathcal{S}({p}^{i-1}_{n}) }{h}
\zeta
{\,{\rm d}x}
+
\varrho_{w}
\int_{\Omega}
\chi^{\varepsilon}_{a}
\phi_{a}
\frac{ \mathcal{S}({p}^{i}_{n}) - \mathcal{S}({p}^{i-1}_{n}) }{h}
\zeta
{\,{\rm d}x}
\nonumber
\\
&
+
\varrho_{w}
\int_{\Omega}
\left[
\chi^{\varepsilon}_{c}
\frac{{k}_{c}({r}^{i-1}_{n})}{\mu({\vartheta}^{i-1}_{n})}
{k}_{R}(\mathcal{S}({p}^{i}_{n}))
+
\chi^{\varepsilon}_{a}
\frac{ {k}_{a} }{\mu({\vartheta}^{i-1}_{n})}
{k}_{R}(\mathcal{S}({p}^{i}_{n}))
\right]
\nabla {p}_{n}^{i}
\cdot\nabla\zeta
{\,{\rm d}x}
\nonumber
\\
&
+
\beta_{e}
\int_{\partial\Omega}
\left(
{p}_{n}^{i}
-
{p}_{\infty}
\right)
\zeta
{\,{\rm d}S}
\nonumber
\\
=
&
\;
\alpha_1
\int_{\Omega}
\chi^{\varepsilon}_{c}
f({{p}_{n}^{i}},\vartheta_{n}^{i-1},{r}_{n}^{i-1})
\zeta
{\,{\rm d}x}
\end{align}
for any $\zeta \in {{W}^{1,2}(\Omega)}$;
\begin{align}
\label{approximate_problem_03}
&
c_{w}
\int_{\Omega}
\frac{
{b}^{\varepsilon}({x},{p}^{i}_{n},{r}^{i}_{n})  \vartheta^{i}_{n}
-
{b}^{\varepsilon}({x},{p}^{i-1}_{n},{r}^{i-1}_{n}) \vartheta_{n}^{i-1} }{h}
\psi
{\,{\rm d}x}
\nonumber
\\
&
+
\int_{\Omega}
\frac{ \sigma^{\varepsilon}(x,r_{n}^{i})\vartheta_{n}^{i} - \sigma^{\varepsilon}(x,r_{n}^{i-1})\vartheta_{n}^{i-1} }{h}  \psi
{\,{\rm d}x}
\nonumber
\\
&
+
\int_{\Omega}
\lambda^{\varepsilon}({x},{{p}_{n}^{i-1}},\vartheta_{n}^{i-1},{r}_{n}^{i-1})
\nabla \vartheta_{n}^{i}
\cdot
\nabla\psi
{\,{\rm d}x}
\nonumber
\\
&
+
c_{w}
\int_{\Omega}
\vartheta_{n}^{i}
{a}^{\varepsilon}({x},{{p}_{n}^{i}},\vartheta_{n}^{i-1},{r}_{n}^{i-1})
\nabla {p}_{n}^{i}
\cdot \nabla\psi
{\,{\rm d}x}
\nonumber
\\
&
+
\alpha_{e}
\int_{\partial\Omega}
\left(
\vartheta_{n}^{i}
-
\vartheta_{\infty}
\right)
\psi
{\,{\rm d}S}
+
c_{w}
\int_{\partial\Omega_T}
\beta_{e} \vartheta_{n}^{i}
(
{p}_{n}^{i}
-
{p}_{\infty}
)
\psi
\,
{\rm d}{S} {\rm d}t
\nonumber
\\
=
&
\;
\alpha_2
\int_{\Omega}
\chi^{\varepsilon}_{c}(x)
f({{p}_{n}^{i}},\vartheta_{n}^{i-1},{r}_{n}^{i-1})
\psi
{\,{\rm d}x}
\end{align}
for any $\psi \in {{W}^{1,2}(\Omega)}$
and
\begin{align}
&
r_{n}^{i}
=
{h} \sum_{j=1}^{i}
f({p}_{n}^{j},\vartheta_{n}^{j-1},{r}_{n}^{j-1}),
\quad
i=1,\dots,n,
\label{approximate_problem_04a}
\\
&
r_{n}^{0}(x) = 0.
\label{approximate_problem_04b}
\end{align}

\begin{theorem}[Existence of the solution to
\eqref{approximate_problem_01}--\eqref{approximate_problem_04b}]
\label{thm:aprox}
Let
${p}^{i-1}_{n} \in  L^{\infty}(\Omega)$,
$\vartheta^{i-1}_{n} \in W^{1,2}(\Omega) \cap L^{\infty}(\Omega)$
and
${r}^{i-1}_{n} \in  W^{1,2}(\Omega) \cap L^{\infty}(\Omega)$
be given and the Assumptions {\rm (i)--(v)} be satisfied.
Then there exists $[{p}^{i}_{n},\vartheta^{i}_{n},{r}^{i}_{n}]$,
such that
${p}^{i}_{n} \in  W^{1,s}(\Omega)$,
$\vartheta^{i}_{n} \in  W^{1,s}(\Omega)$ with some $s>2$,
and
${r}^{i}_{n} \in  W^{1,2}(\Omega) \cap L^{\infty}(\Omega)$,
satisfying
\eqref{approximate_problem_01}--\eqref{approximate_problem_04b}.
\end{theorem}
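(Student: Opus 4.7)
The plan is to exploit the triangular coupling among the three unknowns at time level $i$. Since \eqref{approximate_problem_04a} is a pointwise, explicit update, I view $r_n^i$ as the Lipschitz map $p_n^i \mapsto R(p_n^i) := r_n^{i-1} + h\, f(p_n^i,\vartheta_n^{i-1},r_n^{i-1})$ of the (unknown) pressure; the temperature equation \eqref{approximate_problem_03} is then linear in $\vartheta_n^i$ once $p_n^i$ and $r_n^i$ are fixed. Thus it suffices to (a) solve the semilinear pressure equation \eqref{approximate_problem_01} coupled with the memory update, and then (b) solve a linear elliptic problem for $\vartheta_n^i$.

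For step (a) I would apply the Schauder fixed-point theorem on the closed convex set $K := \{q \in L^{\infty}(\Omega) : p_{\infty} \le q \le 0\}$. Given $\tilde p \in K$, set $\tilde r := R(\tilde p)$, which lies in $W^{1,2}(\Omega) \cap L^{\infty}(\Omega)$ by (iv), \eqref{lipschitz_phi} and the regularity of the time-lagged data. Define $p := F(\tilde p)$ as the unique weak solution in $W^{1,2}(\Omega)$ of the \emph{linearised} problem obtained from \eqref{approximate_problem_01} by freezing $\phi_c$, $k_R \circ \mathcal{S}$, and $f$ at $(\tilde p,\tilde r,\vartheta_n^{i-1})$ while keeping $\mathcal{S}(p)$ unfrozen. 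The resulting operator is of Browder--Minty type: the zeroth-order part is strictly monotone by \eqref{con11a}, the elliptic part is bounded and uniformly coercive by \eqref{cond:intr_perm}--\eqref{cond:viscosity}, and the Newton boundary term $\beta_e \int_{\partial\Omega} p\, \zeta\,{\rm d}S$ together with the trace form of the Poincar\'e inequality yields coercivity on $W^{1,2}(\Omega)$. The $L^{\infty}$ bounds $p_{\infty} \le p \le 0$ are obtained by testing with $(p)_+$ (the RHS is nonpositive since $\alpha_1<0$ and $f \ge 0$) and with $(p - p_{\infty})_-$ (the source vanishes there by \eqref{assum:zero_hydration_f}, while the residual from $\phi_c$ is absorbed using the structural compatibility \eqref{material_constants_condition_drying}), so $F$ maps $K$ into itself. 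Meyers' theorem then provides a uniform bound $\|F(\tilde p)\|_{W^{1,s}(\Omega)} \le C$ for some $s>2$ depending only on data, and the compact embedding $W^{1,s}(\Omega) \Subset L^{\infty}(\Omega)$ in two space dimensions yields compactness of $F$ in $L^{\infty}(\Omega)$. Continuity of $F$ follows from continuity of the frozen coefficients, and Schauder produces a fixed point $p_n^i \in K \cap W^{1,s}(\Omega)$. Setting $r_n^i := R(p_n^i)$ gives the required regularity for the memory by (iv) and the chain rule.

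For step (b), \eqref{approximate_problem_03} is a linear elliptic equation for $\vartheta_n^i$. The convective term $c_w \int_\Omega \vartheta_n^i\, a^{\varepsilon}\nabla p_n^i \cdot \nabla \psi\,{\rm d}x$ is continuous on $W^{1,2}(\Omega) \times W^{1,2}(\Omega)$ because $\nabla p_n^i \in L^s$ with $s>2$ combined with the 2D embedding $W^{1,2}(\Omega) \hookrightarrow L^q(\Omega)$ for every $q<\infty$; a Young-type splitting absorbs a small multiple of $\|\nabla \vartheta_n^i\|_{L^2}^2$ into the principal part. Coercivity of the bilinear form then follows from the strictly positive lower bound on $c_w b^{\varepsilon}(x,p_n^i,r_n^i) + \sigma^{\varepsilon}(x,r_n^i)$ (via \eqref{bound_phi}), the uniform ellipticity \eqref{con12c}--\eqref{con12d} of $\lambda^{\varepsilon}$, and the nonnegative Newton terms $\alpha_e$, $c_w \beta_e(p_n^i - p_{\infty}) \ge 0$ on $\partial\Omega$ (using the $L^\infty$ bound of step~(a)), provided $h$ is chosen smaller than a threshold depending only on the data. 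Lax--Milgram produces a unique $\vartheta_n^i \in W^{1,2}(\Omega)$, a Moser truncation gives the $L^{\infty}$ bound, and a second application of Meyers yields $\vartheta_n^i \in W^{1,s}(\Omega)$.

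The main obstacle is the Schauder step for the pressure: one must simultaneously preserve the $L^\infty$-ball $[p_{\infty},0]$ under $F$ and secure $W^{1,s}$-regularity with $s>2$ in order to compactify. Both the invariance of the ball (needed to avoid the degeneracy $k_R(\mathcal{S}(p))$ could otherwise cause) and the subsequent treatment of the convective coupling in the temperature equation rest crucially on the sign conditions (v) and the compatibility assumption \eqref{material_constants_condition_drying}; once these bounds are in place, all remaining steps are standard.
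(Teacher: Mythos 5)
Your overall architecture (decouple $r_n^i$ as an explicit Lipschitz update of $p_n^i$, solve the pressure equation first, then a linear problem for $\vartheta_n^i$) matches the paper's, but your treatment of the pressure equation diverges: the paper does \emph{not} use a fixed point. It first proves the bound $p_\infty\le p_n^i\le 0$ for the fully nonlinear discrete equation (Theorem~\ref{thm:max_principle_pressure}), then truncates $k_R\circ\mathcal{S}$ to a nondegenerate $\widetilde k_r$ and applies the Kirchhoff transformation $u=\kappa(p_n^i)$, which makes the principal part linear in $u$ and leaves only bounded Carath\'eodory lower-order terms, so existence follows directly from pseudomonotone operator theory \cite[Chapter~2]{Roubicek2005}; the $W^{1,s}$, $s>2$, regularity then comes from \cite[Theorem~3]{gallouet} rather than Meyers. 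Your Schauder route is a legitimate alternative in principle, but it contains a genuine gap at exactly the point where the two approaches differ.

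The gap is the invariance $F(K)\subseteq K$ at the lower bound. The scheme \eqref{approximate_problem_01} is deliberately implicit in the pressure inside the source, $f(p_n^i,\vartheta_n^{i-1},r_n^{i-1})$, and the paper's proof of $p_n^i\ge p_\infty$ rests on the \emph{pointwise} identity $f(p_n^i,\cdot,\cdot)\,(\mathcal{S}(p_n^i)-\mathcal{S}(p_\infty))_-\equiv 0$: on the support of the test function one has $p_n^i<p_\infty$, so $f$ vanishes there by \eqref{assum:zero_hydration_f} (and the same factor $f(p_n^i,\cdot,\cdot)$ kills the residual coming from $\phi_c(r_n^i)-\phi_c(r_n^{i-1})$ via \eqref{est:902}). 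In your linearised problem the source is $\alpha_1\chi_c^{\varepsilon}f(\tilde p,\cdot,\cdot)$ with $\tilde p\in K$, i.e.\ $\tilde p\ge p_\infty$ a.e., so $f(\tilde p,\cdot,\cdot)$ does \emph{not} vanish on the set $\{F(\tilde p)<p_\infty\}$; since $\alpha_1<0$ and $f\ge0$, this frozen sink actively pushes $F(\tilde p)$ below $p_\infty$ and your claim that ``the source vanishes there by \eqref{assum:zero_hydration_f}'' is false for the frozen problem. Without the lower bound the set $K$ is not invariant, Schauder does not apply, and — more importantly — you lose the uniform positive lower bound on $k_R(\mathcal{S}(\cdot))$ that your ellipticity and Meyers estimates require. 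The fix is either to leave $f$ (and hence $\phi_c(R(p))$) evaluated at the unknown $p$ — the lower-order term is then bounded Carath\'eodory, so the partially frozen problem is still solvable by Leray--Lions/pseudomonotone arguments and the paper's maximum-principle computation goes through verbatim — or to truncate $k_R\circ\mathcal{S}$ below $p_\infty$ as the paper does, which removes the need for invariance altogether. A secondary caveat: your coercivity for the temperature equation requires $h$ below a threshold depending on $\|\nabla p_n^i\|_{L^s(\Omega)}$, a restriction not present in the statement of the theorem (the paper instead invokes \cite[Theorem~6.5]{BenesKrupicka2016} for this step); this should at least be acknowledged or replaced by a Fredholm-type argument.
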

Note that by Theorem~\ref{thm:aprox} and
the embedding theorems  ${W}^{1,s}(\Omega) \hookrightarrow W^{1,2}(\Omega)$
and $W^{1,s}(\Omega) \hookrightarrow L^{\infty}(\Omega)$
(with some $s>2$ and the two-dimensional domain $\Omega$ with Lipschitz boundary)
we are able to solve \eqref{approximate_problem_01}--\eqref{approximate_problem_04b}
recursively for
$[p^{i}_{n},\vartheta^{i}_{n},r^{i}_{n}]$
by the already known
$[p^{i-1}_{n},\vartheta^{i-1}_{n},r^{i-1}_{n}]$
from the preceding time step,
such that we obtain
\begin{equation}\label{regularity_stationary}
\left\{ \quad
\begin{split}
&   {p}^{i}_{n} \in  W^{1,2}(\Omega) \cap L^{\infty}(\Omega),
\\
&   {\vartheta}^{i}_{n} \in  W^{1,2}(\Omega) \cap L^{\infty}(\Omega),
\\
&   {r}^{i}_{n} \in  W^{1,2}(\Omega) \cap L^{\infty}(\Omega)
\end{split}
\right.
\qquad \qquad \textmd{for all} \; i=1,\dots,n.
\end{equation}
\begin{theorem}\label{thm:max_principle_pressure}
Let ${r}^{i}_{n},{r}^{i-1}_{n},\vartheta^{i-1}_{n},{p}^{i-1}_{n} \in L^{1}(\Omega)$
be given and let
${p}^{i}_{n} \in  {W}^{1,2}(\Omega) \cap {L}^{\infty}(\Omega)$
solve \eqref{approximate_problem_01}.
Then
\begin{equation}\label{est:p_uniform_bound}
0   \geq   {p}_{n}^{i} \geq {p}_{\infty} \; \textmd{ almost everywhere in } \Omega \textmd{ and } \partial\Omega
\textmd{ and for all} \; i=1,2,\dots,n.
\end{equation}
\end{theorem}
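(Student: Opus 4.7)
The plan is to prove both bounds simultaneously by induction on the time level $i$. The base case $i=0$ is immediate from \eqref{cond:lower_bound_pressure}, so assume $p_\infty \leq p^{i-1}_n \leq 0$ almost everywhere and seek the same bound for $p^i_n$. The two inequalities are established by testing the weak formulation \eqref{approximate_problem_01} with the two admissible truncations $\zeta = (p^i_n)^+$ and $\zeta = (p_\infty - p^i_n)^+ \in W^{1,2}(\Omega)$, and exploiting the sign structure of each term together with the strict monotonicity of $\mathcal{S}$, the Lipschitz bound \eqref{lipschitz_phi}, the structural inequality \eqref{material_constants_condition_drying}, and the vanishing property \eqref{assum:zero_hydration_f} of $f$.

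For the upper bound, testing with $\zeta = (p^i_n)^+$, the diffusion integral equals $\varrho_w\int_\Omega[\cdot]\,|\nabla (p^i_n)^+|^2 \geq 0$ and the boundary term satisfies $\beta_e \int_{\partial\Omega}(p^i_n - p_\infty)(p^i_n)^+\,dS \geq \beta_e\int_{\partial\Omega}[(p^i_n)^+]^2\,dS \geq 0$ because $p_\infty < 0$. For the discrete-time term, I decompose
\[
\phi_c(r^i_n)\mathcal{S}(p^i_n) - \phi_c(r^{i-1}_n)\mathcal{S}(p^{i-1}_n)
= \phi_c(r^i_n)\bigl[\mathcal{S}(p^i_n)-\mathcal{S}(p^{i-1}_n)\bigr]
+ \bigl[\phi_c(r^i_n)-\phi_c(r^{i-1}_n)\bigr]\mathcal{S}(p^{i-1}_n).
\]
On $\{p^i_n > 0\}$ the inductive hypothesis gives $p^i_n > 0 \geq p^{i-1}_n$, so the first summand tested against $(p^i_n)^+$ is non-negative; the same holds for the $\chi_a$-part. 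The second summand is estimated via \eqref{lipschitz_phi} and the identity $r^i_n - r^{i-1}_n = h\,f(p^i_n,\vartheta^{i-1}_n,r^{i-1}_n)$ from \eqref{approximate_problem_04a}, yielding the pointwise bound $\varrho_w C_\phi C_s\, f(p^i_n,\vartheta^{i-1}_n,r^{i-1}_n)(p^i_n)^+$. Collecting this with the right-hand side $\alpha_1\int_\Omega \chi^{\varepsilon}_c f(p^i_n,\vartheta^{i-1}_n,r^{i-1}_n)(p^i_n)^+\,dx$, the structural hypothesis \eqref{material_constants_condition_drying} ensures the net contribution is bounded above by $\int_\Omega \chi^{\varepsilon}_c (\alpha_1 + \varrho_w C_\phi C_s)\,f\,(p^i_n)^+\,dx \leq 0$. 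Thus a sum of non-negative terms is non-positive, forcing each to vanish. Strict monotonicity $\mathcal{S}'>0$ then gives $|\{p^i_n>0\}|=0$, and the vanishing of the boundary integral gives the same conclusion on $\partial\Omega$.

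For the lower bound I test with $\zeta = (p_\infty - p^i_n)^+$. The key observation is that on $\{p^i_n < p_\infty\}$ assumption \eqref{assum:zero_hydration_f} yields $f(p^i_n,\vartheta^{i-1}_n,r^{i-1}_n) = 0$, hence by \eqref{approximate_problem_04a} also $r^i_n = r^{i-1}_n$ pointwise there, so the troublesome $\phi_c$-increment drops out entirely. What remains of the time-discrete term, both on the $\chi_c$- and $\chi_a$-components, is proportional to $[\mathcal{S}(p^i_n)-\mathcal{S}(p^{i-1}_n)](p_\infty - p^i_n)^+$; since $p^{i-1}_n \geq p_\infty > p^i_n$ on this set, monotonicity of $\mathcal{S}$ makes both contributions non-positive. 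The diffusion term equals $-\varrho_w\int_\Omega[\cdot]\,|\nabla(p_\infty - p^i_n)^+|^2 \leq 0$, the boundary term equals $-\beta_e\int_{\partial\Omega}[(p_\infty - p^i_n)^+]^2\,dS \leq 0$, and the source on the right vanishes for the same reason as above. The equation therefore expresses a sum of non-positive quantities equal to zero; each vanishes, and from the diffusion and boundary integrals one concludes $(p_\infty - p^i_n)^+ \equiv 0$ in $\Omega$ and on $\partial\Omega$.

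The main obstacle is precisely the memory coupling: one cannot test the $p$-equation with a truncation and obtain a clean sign because $r^i_n$ itself depends on $p^i_n$ through \eqref{approximate_problem_04a}. The resolution is two-sided — the algebraic drying condition \eqref{material_constants_condition_drying} absorbs the Lipschitz remainder from $\phi_c(r^i_n) - \phi_c(r^{i-1}_n)$ against the (signed) source on the right for the upper bound, while the vanishing condition \eqref{assum:zero_hydration_f} kills both the $r$-increment and the source simultaneously on the bad set for the lower bound. The decomposition of $\phi_c(r^i_n)\mathcal{S}(p^i_n)-\phi_c(r^{i-1}_n)\mathcal{S}(p^{i-1}_n)$ into a monotone part and a Lipschitz-controlled remainder is the technical core of the argument.
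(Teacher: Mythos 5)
Your proof is correct and follows essentially the same strategy as the paper's: test the discrete weak formulation \eqref{approximate_problem_01} with a one-sided truncation, let the drying condition \eqref{material_constants_condition_drying} absorb the Lipschitz increment of $\phi_{c}$ coming from \eqref{approximate_problem_04a} for the upper bound, and let \eqref{assum:zero_hydration_f} simultaneously kill the source and the $r$-increment for the lower bound. The only differences are technical: the paper tests with $(\mathcal{S}({p}_{n}^{i})-\mathcal{S}(0))_{+}$ and $(\mathcal{S}({p}_{n}^{i})-\mathcal{S}({p}_{\infty}))_{-}$ and telescopes the resulting quadratic terms over $i$ (so only the initial bound \eqref{cond:lower_bound_pressure} is invoked), whereas you test with $({p}_{n}^{i})^{+}$ and $({p}_{\infty}-{p}_{n}^{i})^{+}$ and run an explicit induction using the pointwise bound at level $i-1$; both arguments are valid for the recursively constructed scheme.
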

\begin{proof}[Proof of Theorem~\ref{thm:max_principle_pressure}]
Let us set (for $i=1,2,\dots,n$)
\begin{equation*}
(S({p}_{n}^{i})-S({p}_{\infty}))_{-}
\equiv
\left\{
\begin{array}{ll}
S({p}_{n}^{i})-S({p}_{\infty}) ,         &  {p}_{n}^{i} < {p}_{\infty} ,
\\
0 ,                   & {p}_{n}^{i} \geq {p}_{\infty}.
\end{array} \right.
\end{equation*}
Setting $\zeta = (S({p}_{n}^{i})-S({p}_{\infty}))_{-}$
as a test function in
\eqref{approximate_problem_01}
we arrive at the estimate
\begin{align}
\label{est:901}
&
\frac{\varrho_{w}}{2h}
\int_{\Omega}
(\chi^{\varepsilon}_{c}\phi_{c}({r}^{i}_{n}) + \chi^{\varepsilon}_{a}\phi_{a})
|(\mathcal{S}({p}_{n}^{i})-\mathcal{S}({p}_{\infty}))_{-}|^2
{\,{\rm d}x}
\nonumber
\\
&
-
\frac{\varrho_{w}}{2h}
\int_{\Omega}
 (\chi^{\varepsilon}_{c}\phi_{c}({r}^{i-1}_{n}) + \chi^{\varepsilon}_{a}\phi_{a})
|(\mathcal{S}({p}_{n}^{i-1})-\mathcal{S}({p}_{\infty}))_{-}|^2
{\,{\rm d}x}
\nonumber
\\
&
+
\int_{\Omega}
a^{\varepsilon}({x},{{p}_{n}^{i}},\vartheta_{n}^{i-1},{r}_{n}^{i-1})
\frac{1}{\mathcal{S}'({p}_{n}^{i})}
|\nabla (\mathcal{S}({p}_{n}^{i}) - \mathcal{S}({p}_{\infty}))_{-}|^2
{\,{\rm d}x}
\nonumber
\\
&
+
\beta_{e}
\int_{\partial\Omega}
\left(
{p}_{n}^{i}
-
{p}_{\infty}
\right)
(\mathcal{S}({p}_{n}^{i})-\mathcal{S}({p}_{\infty}))_{-}
{\,{\rm d}S}
\nonumber
\\
\leq
&
\;
\alpha_1
\int_{\Omega}
\chi^{\varepsilon}_{c}
f({{p}_{n}^{i}}, \vartheta_{n}^{i-1}, {r}_{n}^{i-1})
(\mathcal{S}({p}_{n}^{i})-\mathcal{S}({p}_{\infty}))_{-}
{\,{\rm d}x}
\nonumber
\\
&
-
\frac{\varrho_{w}}{2h}
\int_{\Omega}
\chi^{\varepsilon}_{c}
\left[
\phi_{c}({r}^{i}_{n}) - \phi_{c}({r}^{i-1}_{n})
\right]
(\mathcal{S}({p}_{n}^{i})+\mathcal{S}({p}_{\infty}))
(\mathcal{S}({p}_{n}^{i})-\mathcal{S}({p}_{\infty}))_{-}
{\,{\rm d}x}.
\end{align}
Using the Lipschitz continuity of $\phi$ with respect to $r$  (recall \eqref{lipschitz_phi})
and using \eqref{approximate_problem_04a}
we can write
\begin{align}\label{est:902}
| \phi_{c}({r}^{i}_{n}) - \phi_{c}({r}^{i-1}_{n})  |
&
\leq
C_{\phi}
|r_{n}^{i} - r_{n}^{i-1}|
\nonumber
\\
&
\leq
{h}
C_{\phi}
|f({{p}_{n}^{i}}, \vartheta_{n}^{i-1}, {r}_{n}^{i-1})|.
\end{align}
Upon addition \eqref{est:901} for $i=1,2,\dots,j$ and taking into account
\eqref{est:902},
we can write
\begin{align}
\label{est:901b}
&
\frac{\varrho_{w}}{2h}
\int_{\Omega}
(\chi^{\varepsilon}_{c}\phi_{c}({r}^{j}_{n}) + \chi^{\varepsilon}_{a}\phi_{a})
|(\mathcal{S}({p}_{n}^{j})-\mathcal{S}({p}_{\infty}))_{-}|^2
{\,{\rm d}x}
\nonumber
\\
&
+
\sum_{i=1}^{j}
\int_{\Omega}
a^{\varepsilon}({x},{{p}_{n}^{i}},\vartheta_{n}^{i-1},{r}_{n}^{i-1})
\frac{1}{\mathcal{S}'({p}_{n}^{i})}
|\nabla (\mathcal{S}({p}_{n}^{i}) - \mathcal{S}({p}_{\infty}))_{-}|^2
{\,{\rm d}x}
\nonumber
\\
&
+
\sum_{i=1}^{j}
\int_{\partial\Omega}
\beta_{e}
\left(
{p}_{n}^{i}
-
{p}_{\infty}
\right)
(\mathcal{S}({p}_{n}^{i})-\mathcal{S}({p}_{\infty}))_{-}
{\,{\rm d}S}
\nonumber
\\
\leq
&
\;
\sum_{i=1}^{j}
\int_{\Omega}
(  \varrho_{w} C_{\phi}S_s  + |\alpha_1|)
\chi^{\varepsilon}_{c}
|f({{p}_{n}^{i}}, \vartheta_{n}^{i-1}, {r}_{n}^{i-1})
(\mathcal{S}({p}_{n}^{i}) - \mathcal{S}({p}_{\infty}))_{-}|
{\,{\rm d}x}
\nonumber
\\
&
-
\frac{\varrho_{w}}{2h}
\int_{\Omega}
 (\chi^{\varepsilon}_{c}\phi_{c}({r}^{0}_{n}) + \chi^{\varepsilon}_{a}\phi_{a})
|(\mathcal{S}({p}_{n}^{0})-\mathcal{S}({p}_{\infty}))_{-}|^2
{\,{\rm d}x}
\end{align}
and in view of \eqref{con11a},
\eqref{assum:zero_hydration_f} and \eqref{cond:lower_bound_pressure}
we obtain
\begin{align}
\label{est:901c}
&
\frac{{c}_{1}}{2h}
\int_{\Omega}
|(\mathcal{S}({p}_{n}^{j})-\mathcal{S}({p}_{\infty}))_{-}|^2
{\,{\rm d}x}
\nonumber
\\
&
+
\beta_{e}
{S}_{L}
\sum_{i=1}^{j}
\int_{\partial\Omega}
|(\mathcal{S}({p}_{n}^{j})-\mathcal{S}({p}_{\infty}))_{-}|^2
{\,{\rm d}S}
\nonumber
\\
\leq
&
\;
\sum_{i=1}^{j}
\int_{\Omega}
( \varrho_{w} C_{\phi}S_s + |\alpha_1|)
\chi^{\varepsilon}_{c}
|f({{p}_{n}^{i}}, \vartheta_{n}^{i-1}, {r}_{n}^{i-1})
(\mathcal{S}({p}_{n}^{i}) - \mathcal{S}({p}_{\infty}))_{-}|
{\,{\rm d}x}
\nonumber
\\
&
-
\frac{\varrho_{w}}{2h}
\int_{\Omega}
 (\chi^{\varepsilon}_{c}\phi_{c}({r}^{0}_{n}) + \chi^{\varepsilon}_{a}\phi_{a})
|(\mathcal{S}({p}_{n}^{0})-\mathcal{S}({p}_{\infty}))_{-}|^2
{\,{\rm d}x}
\nonumber
\\
=
&
\;
0.
\end{align}
Now, let us set
\begin{equation*}
(\mathcal{S}({p}_{n}^{i})-\mathcal{S}(0))_{+}
\equiv
\left\{
\begin{array}{ll}
\mathcal{S}({p}_{n}^{i})-\mathcal{S}(0) ,         &  {p}_{n}^{i} > 0 ,
\\
0 ,                   & {p}_{n}^{i} \leq  0,
\end{array} \right.
\qquad
i=1,2,\dots,n.
\end{equation*}
Setting $\zeta = (S({p}_{n}^{i})-S(0))_{+}$
as a test function in
\eqref{approximate_problem_01}
we arrive at the estimate
\begin{align}
\label{est:901k}
&
\frac{\varrho_{w}}{2h}
\int_{\Omega}
(\chi^{\varepsilon}_{c}\phi_{c}({r}^{i}_{n}) + \chi^{\varepsilon}_{a}\phi_{a})
|(\mathcal{S}({p}_{n}^{i})-\mathcal{S}(0))_{+}|^2
{\,{\rm d}x}
\nonumber
\\
&
-
\frac{\varrho_{w}}{2h}
\int_{\Omega}
 (\chi^{\varepsilon}_{c}\phi_{c}({r}^{i-1}_{n}) + \chi^{\varepsilon}_{a}\phi_{a})
|(\mathcal{S}({p}_{n}^{i})-\mathcal{S}(0))_{+}|^2
{\,{\rm d}x}
\nonumber
\\
&
+
\int_{\Omega}
a^{\varepsilon}({x},{{p}_{n}^{i}},\vartheta_{n}^{i-1},{r}_{n}^{i-1})
\frac{1}{\mathcal{S}'({p}_{n}^{i})}
|\nabla (\mathcal{S}({p}_{n}^{i})-\mathcal{S}(0))_{+}|^2
{\,{\rm d}x}
\nonumber
\\
&
+
\beta_{e}
\int_{\partial\Omega}
\left(
{p}_{n}^{i}
-
{p}_{\infty}
\right)
(\mathcal{S}({p}_{n}^{i})-\mathcal{S}(0))_{+}
{\,{\rm d}S}
\nonumber
\\
\leq
&
\;
\alpha_1
\int_{\Omega}
\chi^{\varepsilon}_{c}
f({{p}_{n}^{i}}, \vartheta_{n}^{i-1}, {r}_{n}^{i-1})
(\mathcal{S}({p}_{n}^{i})-\mathcal{S}(0))_{+}
{\,{\rm d}x}
\nonumber
\\
&
-
\frac{\varrho_{w}}{2h}
\int_{\Omega}
\chi^{\varepsilon}_{c}
\left[
\phi_{c}({r}^{i}_{n}) - \phi_{c}({r}^{i-1}_{n})
\right]
(\mathcal{S}({p}_{n}^{i})+\mathcal{S}(0))
(\mathcal{S}({p}_{n}^{i})-\mathcal{S}(0))_{+}
{\,{\rm d}x}.
\end{align}
Upon addition \eqref{est:901k} for $i=1,2,\dots,j$ and taking into account
\eqref{est:902},
we deduce
\begin{align}
\label{est:901bk}
&
\frac{\varrho_{w}}{2h}
\int_{\Omega}
(\chi^{\varepsilon}_{c}\phi_{c}({r}^{j}_{n}) + \chi^{\varepsilon}_{a}\phi_{a})
|(\mathcal{S}({p}_{n}^{i})-\mathcal{S}(0))_{+}|^2
{\,{\rm d}x}
\nonumber
\\
&
+
\sum_{i=1}^{j}
\int_{\Omega}
a^{\varepsilon}({x},{{p}_{n}^{i}},\vartheta_{n}^{i-1},{r}_{n}^{i-1})
\frac{1}{\mathcal{S}'({p}_{n}^{i})}
|\nabla (\mathcal{S}({p}_{n}^{i})-\mathcal{S}(0))_{+}|^2
{\,{\rm d}x}
\nonumber
\\
&
+
\sum_{i=1}^{j}
\int_{\partial\Omega}
\beta_{e}
\left(
{p}_{n}^{i}
-
{p}_{\infty}
\right)
(\mathcal{S}({p}_{n}^{i})-\mathcal{S}(0))_{+}
{\,{\rm d}S}
\nonumber
\\
\leq
&
\;
\sum_{i=1}^{j}
\int_{\Omega}
(  \varrho_{w} C_{\phi}S_s  + \alpha_1)
\chi^{\varepsilon}_{c}
f({{p}_{n}^{i}}, \vartheta_{n}^{i-1}, {r}_{n}^{i-1})
(\mathcal{S}({p}_{n}^{i})-\mathcal{S}(0))_{+}
{\,{\rm d}x}
\nonumber
\\
&
-
\frac{\varrho_{w}}{2h}
\int_{\Omega}
 (\chi^{\varepsilon}_{c}\phi_{c}({r}^{0}_{n}) + \chi^{\varepsilon}_{a}\phi_{a})
|(\mathcal{S}({p}_{n}^{i})-\mathcal{S}(0))_{+}|^2
{\,{\rm d}x}
\end{align}
and in view of \eqref{con11a}, \eqref{lipschitz_phi},
\eqref{assum:bound_f} and \eqref{material_constants_condition_drying} we arrive at the estimate
\begin{align}
\label{est:901ck}
&
\frac{{c}_{1}}{2h}
\int_{\Omega}
|(\mathcal{S}({p}_{n}^{i})-\mathcal{S}(0))_{+}|^2
{\,{\rm d}x}
\nonumber
\\
&
+
\beta_{e}
{S}_{L}
\sum_{i=1}^{j}
\int_{\partial\Omega}
|(\mathcal{S}({p}_{n}^{i})-\mathcal{S}(0))_{+}|^2
{\,{\rm d}S}
\nonumber
\\
\leq
&
\;
\sum_{i=1}^{j}
\int_{\Omega}
( \varrho_{w} C_{\phi}S_s + \alpha_1)
\chi^{\varepsilon}_{c}
f({{p}_{n}^{i}}, \vartheta_{n}^{i-1}, {r}_{n}^{i-1})
(\mathcal{S}({p}_{n}^{i})-\mathcal{S}(0))_{+}
{\,{\rm d}x}
\nonumber
\\
&
-
\frac{\varrho_{w}}{2h}
\int_{\Omega}
 (\chi^{\varepsilon}_{c}\phi_{c}({r}^{0}_{n}) + \chi^{\varepsilon}_{a}\phi_{a})
|(\mathcal{S}({p}_{n}^{i})-\mathcal{S}(0))_{+}|^2
{\,{\rm d}x}
+
\sum_{i=1}^{j}
\int_{\partial\Omega}
\beta_{e}
{p}_{\infty}
(\mathcal{S}({p}_{n}^{i})-\mathcal{S}(0))_{+}
{\,{\rm d}S}
\nonumber
\\
\leq
&
\;
0.
\end{align}
From \eqref{est:901c} and \eqref{est:901ck} we have \eqref{est:p_uniform_bound}.
\hfill  $\square$
\end{proof}

Now we are ready to prove Theorem~\ref{thm:aprox}.
\begin{proof}[Proof of Theorem~\ref{thm:aprox}]
We begin by proving the existence of ${p}_{n}^{i}  \in {{W}^{1,2}(\Omega)}$,
being the solution to problem \eqref{approximate_problem_01}.
Due to Theorem~\ref{thm:max_principle_pressure}, we may consider the truncated
function $\widetilde{k}_r$ defined by
\begin{equation*}
\widetilde{k}_r (\xi)
\equiv
\left\{
\begin{array}{ll}
{k}_{R} (\mathcal{S}(\xi)) ,         &  \xi > {p}_{\infty} ,
\\
{k}_{R} (\mathcal{S}({p}_{\infty})) ,                   & \xi \leq {p}_{\infty},
\end{array} \right.
\end{equation*}
where ${p}_{\infty}$ is taken from \eqref{est:p_uniform_bound}.
Note that \eqref{est:p_uniform_bound} also remains valid if
in \eqref{approximate_problem_01}
we replace ${k}_{R} \circ \mathcal{S}$ by $\widetilde{k}_r$.
Recall that $k_{R}$ is positive and strictly increasing on $[0,S_s]$
and $\mathcal{S}$ is positive and strictly increasing on $\mathbb{R}$. Hence
$\widetilde{k}_r$ is the increasing function such that
\begin{equation}\label{bound_rel_perm}
0 < K_0 \leq \widetilde{k}_r(\xi) \leq K_1
\qquad
\forall \xi \in \mathbb{R}
\end{equation}
with appropriate chosen constants $K_0$ and $K_1$,
say
$K_0 = {k}_{R}  (\mathcal{S}({p}_{\infty}))$ and $K_1=  {k}_{R}  (S_s)$.
Hence, problem \eqref{approximate_problem_01} takes the form
\begin{align}
\label{proof:approximate_problem_01}
&
\varrho_{w}
\int_{\Omega}
\left[
\chi^{\varepsilon}_{c}
\frac{{k}_{c}({r}^{i-1}_{n})}{\mu({\vartheta}^{i-1}_{n})}
+
\chi^{\varepsilon}_{a}
\frac{ {k}_{a} }{\mu({\vartheta}^{i-1}_{n})}
\right]
\widetilde{k}_{r}(\mathcal{S}({p}^{i}_{n}))
\nabla {p}_{n}^{i}
\cdot\nabla\zeta
{\,{\rm d}x}
\nonumber
\\
&
+
\beta_{e}
\int_{\partial\Omega}
\left(
{p}_{n}^{i}
-
{p}_{\infty}
\right)
\zeta
{\,{\rm d}S}
\nonumber
\\
&
+
\frac{\varrho_{w}}{h}
\int_{\Omega}
\left(
\chi^{\varepsilon}_{c}\phi_{c}({r}^{i}_{n})
+
\chi^{\varepsilon}_{a}
\phi_{a}
\right)
\mathcal{S}({p}^{i}_{n})
\zeta
{\,{\rm d}x}
\nonumber
\\
&
-
\alpha_1
\int_{\Omega}
\chi^{\varepsilon}_{c}
f({{p}_{n}^{i}},\vartheta_{n}^{i-1},{r}_{n}^{i-1})
\zeta
{\,{\rm d}x}
\nonumber
\\
=
&
\;
\frac{\varrho_{w}}{h}
\int_{\Omega}
\left(
\chi^{\varepsilon}_{c}\phi_{c}({r}^{i-1}_{n})
+
\chi^{\varepsilon}_{a}
\phi_{a}
\right)
\mathcal{S}({p}^{i-1}_{n})
\zeta
{\,{\rm d}x}
\end{align}
for any $\zeta \in {{W}^{1,2}(\Omega)}$.
Note that
the unknown $r_{n}^{i}$ in the second line of \eqref{proof:approximate_problem_01}
can be easily eliminated using
the equation \eqref{approximate_problem_04a}, which can be rewritten as
\begin{equation}\label{eq:mamory_mod_01}
r_{n}^{i}
=
r_{n}^{i-1}
+
{h}
f({p}_{n}^{i},\vartheta_{n}^{i-1},{r}_{n}^{i-1}).
\end{equation}
We now
define the so called Kirchhoff transformation, which employs the primitive function
$\kappa: \mathbb{R}\rightarrow\mathbb{R}$, defined by
\begin{displaymath}%
\kappa(\xi) = \int\limits_{0}^{\xi} \widetilde{k}_r(s)
{\rm d}s.
\end{displaymath}
It is worth noting that \eqref{bound_rel_perm} implies $\kappa$ to
be continuous and increasing, and one-to-one with $\kappa^{-1}$
Lipschitz continuous.
Hence,
with the notation $u(x) = \kappa({p}_{n}^{i}(x))$,
 problem \eqref{proof:approximate_problem_01}
can be rewritten in terms of a new variable $u$ as
\begin{equation}
\label{proof_exist:approximate_problem_01}
\int_{\Omega}
A({x})
\nabla {u}
\cdot\nabla\zeta
{\,{\rm d}x}
+
\beta_{e}
\int_{\partial\Omega}
\kappa^{-1}(u)
\zeta
{\,{\rm d}S}
+
\int_{\Omega}
B({x},{u})
\zeta
{\,{\rm d}x}
=
\int_{\Omega}
g(x)
\zeta
{\,{\rm d}x}
+
\beta_{e}
\int_{\partial\Omega}
{p}_{\infty}
\zeta
{\,{\rm d}S}
\end{equation}
for any $\zeta \in {{W}^{1,2}(\Omega)}$,
where we denote briefly
\begin{align*}
A({x})
=
&
\varrho_{w}
\left[
\chi^{\varepsilon}_{c}
\frac{{k}_{c}({r}^{i-1}_{n})}{\mu({\vartheta}^{i-1}_{n})}
+
\chi^{\varepsilon}_{a}
\frac{ {k}_{a} }{\mu({\vartheta}^{i-1}_{n})}
\right],
\nonumber
\\
B({x},{u})
=
&
\frac{\varrho_{w}}{h}
\left(
\chi^{\varepsilon}_{c}\phi_{c}(r_{n}^{i-1}(x)
+
{h}
f(\kappa^{-1}(u),\vartheta_{n}^{i-1}(x),{r}_{n}^{i-1}(x)))
+
\chi^{\varepsilon}_{a}
\phi_{a}
\right)
\mathcal{S}(\kappa^{-1}(u)),
\nonumber
\\
&
-
\alpha_1
\chi^{\varepsilon}_{c}
f(\kappa^{-1}(u),\vartheta_{n}^{i-1},{r}_{n}^{i-1})
\nonumber
\\
g({x})
=
&
\frac{\varrho_{w}}{h}
\left(
\chi^{\varepsilon}_{c}\phi_{c}({r}^{i-1}_{n})
+
\chi^{\varepsilon}_{a}
\phi_{a}
\right)
\mathcal{S}({p}^{i-1}_{n}).
\end{align*}
Note that $g \in L^{\infty}(\Omega)$ and
\begin{align*}
& 0<A_1<A({\cdot})<A_2<+\infty \quad ( A_1, A_2 = {\rm const})
&& \textmd{a.e. in } \Omega,
\nonumber
\\
&
|B({\cdot},{\xi})| \leq C   && \forall \xi \in \mathbb{R} \textmd{ and a.e. in } \Omega.
\nonumber
\end{align*}
The existence of $u \in {W}^{1,2}(\Omega)$,
the solution of problem \eqref{proof_exist:approximate_problem_01},
follows from \cite[Chapter~2]{Roubicek2005}.
With ${u} \in {W}^{1,2}(\Omega)$ in hand,
we now assume the problem
\begin{equation*}
\int_{\Omega}
A({x})
\nabla {u}
\cdot\nabla\zeta
{\,{\rm d}x}
=
\langle \mu , \zeta \rangle_{W^{1,2}(\Omega)',W^{1,2}(\Omega)}
\end{equation*}
for any $\zeta \in {{W}^{1,2}(\Omega)}$,
where the functional $\mu$ is defined by the equation
\begin{displaymath}
\langle \mu , \zeta \rangle_{W^{1,2}(\Omega)',W^{1,2}(\Omega)}
=
-
\beta_{e}
\int_{\partial\Omega}
\kappa^{-1}(u)
\zeta
{\,{\rm d}S}
+
\int_{\Omega}
g(x)
\zeta
{\,{\rm d}x}
-
\int_{\Omega}
B({x},{u})
\zeta
{\,{\rm d}x}
+
\beta_{e}
\int_{\partial\Omega}
{p}_{\infty}
\zeta
{\,{\rm d}S}
\end{displaymath}
for all $\zeta \in {{W}^{1,2}(\Omega)}$.
It is not difficult to show that  $\mu \in W^{1,q'}(\Omega)'$, $q'=q/(q-1)$, with some $q>2$.
The same arguments as in the proof of \cite[Theorem~3]{gallouet} yields  $u \in {W}^{1,q}(\Omega)$.
This (together with the embedding theorem) gives the regularity ${u} \in L^{\infty}(\Omega)$.
Note that, in view of \eqref{bound_rel_perm},
the Kirchhoff transformation preserves $L^{\infty}$ space
for the problem. We now set ${{p}_{n}^{i}}(x) := \kappa^{-1} (u(x))$ a.e. in $\Omega$
to get the representation
\begin{equation*}
\nabla {{p}_{n}^{i}}
=
\frac{1}{\widetilde{k}_r( \kappa^{-1}({u}))}\nabla {u},
\quad \textmd{ i.e. }
\quad
\widetilde{k}_r(  {{p}_{n}^{i}}  )\nabla {{p}_{n}^{i}}
=
\nabla {u}
\end{equation*}
and hence
\begin{equation*}
{{p}_{n}^{i}}  \in {{W}^{1,2}(\Omega)} \cap L^{\infty}(\Omega)
\quad
{ \rm iff }
\quad
u \in {{W}^{1,2}(\Omega)} \cap L^{\infty}(\Omega).
\end{equation*}
We now conclude that ${{p}_{n}^{i}}$ solves \eqref{approximate_problem_01}.

With ${{p}_{n}^{i}}  \in {{W}^{1,2}(\Omega)} \cap L^{\infty}(\Omega)$ in hand,
we rewrite the equation \eqref{approximate_problem_01} in the form
(transferring the lower-order terms to the right hand side)
\begin{align*}
&
\varrho_{w}
\int_{\Omega}
\left[
\chi^{\varepsilon}_{c}
\frac{{k}_{c}({r}^{i-1}_{n})}{\mu({\vartheta}^{i-1}_{n})}
{k}_{R}(\mathcal{S}({p}^{i}_{n}))
+
\chi^{\varepsilon}_{a}
\frac{ {k}_{a} }{\mu({\vartheta}^{i-1}_{n})}
{k}_{R}(\mathcal{S}({p}^{i}_{n}))
\right]
\nabla {p}_{n}^{i}
\cdot\nabla\zeta
{\,{\rm d}x}
\nonumber
\\
&
+
\beta_{e}
\int_{\partial\Omega}
\left(
{p}_{n}^{i}
-
{p}_{\infty}
\right)
\zeta
{\,{\rm d}S}
\nonumber
\\
=
&
\;
\alpha_1
\int_{\Omega}
\chi^{\varepsilon}_{c}
f({{p}_{n}^{i}},\vartheta_{n}^{i-1},{r}_{n}^{i-1})
\zeta
{\,{\rm d}x}
\nonumber
\\
&
-
\varrho_{w}
\int_{\Omega}
\chi^{\varepsilon}_{c}
\frac{ \phi_{c}({r}^{i}_{n})\mathcal{S}({p}^{i}_{n}) - \phi_{c}({r}^{i-1}_{n})\mathcal{S}({p}^{i-1}_{n}) }{h}
\zeta
{\,{\rm d}x}
-
\varrho_{w}
\int_{\Omega}
\chi^{\varepsilon}_{a}
\phi_{a}
\frac{ \mathcal{S}({p}^{i}_{n}) - \mathcal{S}({p}^{i-1}_{n}) }{h}
\zeta
{\,{\rm d}x}
\end{align*}
for any $\zeta \in {{W}^{1,2}(\Omega)}$.

In view of Assumptions (i), (iii) and (iv),
all integrals on the right hand side
make sense for any $\zeta \in {W}^{1,{q}'}(\Omega)$, ${q}'={q}/({q}-1)$ with some ${q}>2$.
Now we are able to apply \cite[Theorem~3]{gallouet} to obtain
${p}^{i}_{n} \in  {W}^{1,s}(\Omega)$ with some $s>2$.
Now with
$\vartheta^{i-1}_{n} \in  W^{1,2}(\Omega)$,
${r}^{i-1}_{n} \in  W^{1,2}(\Omega)$
and
${p}^{i}_{n} \in  W^{1,s}(\Omega)$
(with some $s>2$) in hand,
one obtains ${r}^{i}_{n}$
directly from \eqref{eq:mamory_mod_01}.
Since $f$ is supposed to be Lipschitz continuous,
we easily deduce ${r}^{i}_{n} \in W^{1,2}(\Omega)$ (c.f. \cite[Proposition~1.28]{Roubicek2005}).
Moreover, from \eqref{assum:bound_f} we have ${r}^{i}_{n} \in L^{\infty}(\Omega)$.
The existence of $\vartheta^{i}_{n} \in  {W}^{1,2}(\Omega)$,
the solution to problem \eqref{approximate_problem_03},
can be proven in the same way as \cite[Theorem~6.5]{BenesKrupicka2016}.
In particular, with ${p}^{i}_{n} \in  W^{1,s}(\Omega)$, $s>2$,
and $r_{n}^{i} \in L^{\infty}(\Omega)$, given by \eqref{approximate_problem_04a},
in hand, \eqref{approximate_problem_03}
represents the semilinear equation which can be solved by the approach in \cite[Chapter~2.4]{Roubicek2005}.
Analysis similar to the above yields
$\vartheta^{i}_{n} \in  {W}^{1,s}(\Omega)$ with some ${s}>2$.
By embedding theorem we have $\vartheta^{i}_{n} \in {L}^{\infty}(\Omega)$.

 \hfill $\square$
\end{proof}

\subsection{Temporal interpolants and uniform estimates}
By means of the sequences
${p}^{i}_{n},\vartheta^{i}_{n},{r}^{i}_{n}$
constructed
in Section~\ref{sec:approximations}, we define
the piecewise constant interpolants
$\bar{\varphi}_{n}(t) = \varphi^{i}_{n}$ for $t \in ((i - 1){h}, i{h}]$
and, in addition, we extend $\bar{\varphi}_{n}$ for $t\leq 0$ by
$
\bar{\varphi}_{n}(t) = \varphi_{0}$ for $t \in (-{h}, 0]$.
Here, $\varphi^{i}_{n}$ stands for ${p}^{i}_{n},\vartheta^{i}_{n}$ or ${r}^{i}_{n}$.

For a function $\varphi$ we often use the simplified notation
$\varphi := \varphi(t)$, $\varphi_{h}(t) := \varphi(t-{h})$,
$\partial_t^{-{h}}\varphi(t) := \frac{\varphi(t) - \varphi(t-{h})}{h}$,
$\partial_t^{h}\varphi(t) := \frac{\varphi(t+{h}) - \varphi(t)}{h}$.
Then,
following \eqref{approximate_problem_01}--\eqref{approximate_problem_03},
the piecewise constant time interpolants
$\bar{p}_{n} \in L^{\infty}(0,T;{W}^{1,s}(\Omega))$
and
$\bar{\vartheta}_{n} \in L^{\infty}(0,T;{W}^{1,s}(\Omega))$
(with some $s>2$)
satisfy the equations
\begin{align}
\label{eq:1001}
&
\varrho_{w}
\int_{\Omega}
\partial_t^{-{h}}
[
\left(
\chi^{\varepsilon}_{c} \phi_{c}(\bar{r}_{n}(t))+\chi^{\varepsilon}_{a} \phi_{a}
 \right)\mathcal{S}(\bar{p}_{n}(t))
]
\zeta
{\,{\rm d}x}
\nonumber
\\
&
+
\varrho_{w}
\int_{\Omega}
\left[
\chi^{\varepsilon}_{c}
\frac{{k}_{c}(\bar{r}_{n}(t-{h}))}{\mu(\bar{\vartheta}_{n}(t-{h}))}
+
\chi^{\varepsilon}_{a}
\frac{ {k}_{a} }{\mu(\bar{\vartheta}_{n}(t-{h}))}
\right]
{k}_{R}(\mathcal{S}(\bar{p}_{n}(t)))
\nabla \bar{p}_{n}(t)
\cdot\nabla\zeta
{\,{\rm d}x}
\nonumber
\\
&
+
\beta_{e}
\int_{\partial\Omega}
\left(
\bar{p}_{n}(t)
-
{p}_{\infty}
\right)
\zeta
{\,{\rm d}S}
\nonumber
\\
=
&
\int_{\Omega}
\alpha_1
\chi^{\varepsilon}_{c}
f(\bar{p}_{n}(t),\bar{\vartheta}_{n}(t-{h})),\bar{r}_{n}(t-{h})))
\zeta
{\,{\rm d}x}
\end{align}
for any $\zeta \in {{W}^{1,2}(\Omega)}$
and
\begin{align}\label{eq:1003}
&
c_{w} \varrho_{w}
\int_{\Omega}
\partial_t^{-{h}}
\left[
\left(
\chi^{\varepsilon}_{c} \phi_{c}(\bar{r}_{n}(t))+\chi^{\varepsilon}_{a} \phi_{a}
\right)
\mathcal{S}(\bar{p}_{n}(t))
\bar{\vartheta}_{n}(t)
\right]
\psi
{\,{\rm d}x}
\nonumber
\\
&
+
\int_{\Omega}
\partial_t^{-{h}}
\left[
\sigma^{\varepsilon}({x},\bar{r}_{n}(t)) \bar{\vartheta}_{n}(t)
\right] \psi
{\,{\rm d}x}
\nonumber
\\
&
+
\int_{\Omega}
\lambda^{\varepsilon}(x,\bar{p}_{n}(t-{h}),\bar{\vartheta}_{n}(t-{h}),\bar{r}_{n}(t-{h})) \nabla \bar{\vartheta}_{n}(t) \cdot \nabla\psi
{\,{\rm d}x}
\nonumber
\\
&
+
c_{w}
\int_{\Omega}
\bar{\vartheta}_{n}(t)
\left[
\chi^{\varepsilon}_{c}
\frac{{k}_{c}(\bar{r}_{n}(t-{h}))}{\mu(\bar{\vartheta}_{n}(t-{h}))}
+
\chi^{\varepsilon}_{a}
\frac{ {k}_{a} }{\mu(\bar{\vartheta}_{n}(t-{h}))}
\right]
{k}_{R}(\mathcal{S}(\bar{p}_{n}(t)))
\nabla \bar{p}_{n}(t)
\cdot \nabla\psi
{\,{\rm d}x}
\nonumber
\\
&
+
\alpha_{e}
\int_{\partial\Omega}
\left(
\bar{\vartheta}_{n}(t)
-
\vartheta_{\infty}
\right)
\psi
{\,{\rm d}S}
+
c_{w}
\int_{\partial\Omega}
\beta_{e}
\bar{\vartheta}_{n}(t)
(
\bar{p}_{n}(t)
-
{p}_{\infty}
)
\psi
\,
{\rm d}{S}
\nonumber
\\
=
&
\int_{\Omega}
\alpha_2
\chi^{\varepsilon}_{c}(x)
f(\bar{p}_{n}(t),\bar{\vartheta}_{n}(t-{h})),\bar{r}_{n}(t-{h})))
\psi
{\,{\rm d}x}
\end{align}
for any $\psi \in {{W}^{1,2}(\Omega)}$.
Finally, from \eqref{approximate_problem_04a} and \eqref{approximate_problem_04b} we have
\begin{equation}\label{eq:1004a}
\bar{r}_{n}(t)
=
\int_{0}^{t}
f(\bar{p}_{n}(s),\bar{\vartheta}_{n}(s-{h})),\bar{r}_{n}(s-{h})))
{\,{\rm d}s}
\end{equation}
for all $t \in [0,T]$.
To be able to say something about the limits of the sequences
$\left\{ \bar{p}_{n} \right\}$,
$\left\{ \bar{\vartheta}_{n} \right\}$ 
and
$\left\{ \bar{r}_{n} \right\}$,
we now present some apriori estimates for solutions of
the problem \eqref{eq:1001}--\eqref{eq:1004a}.

Analysis similar to that in \cite[Sections 4.2 and 4.3]{BenesPazanin2018} shows
that
($\Theta$ is defined by \eqref{est:press_02})
\begin{align}
\sup_{0 \leq t \leq T}
\int_{\Omega}
\Theta(\bar{p}_{n}(t))
{{\rm d}x}
+
\int_0^T \|\bar{p}_{n}(t)\|^2_{{W}^{1,2}(\Omega)} {\rm d}t
&\leq C,
\label{est:801}
\\
\int_0^T \|\bar{\vartheta}_{n}(t)\|^2_{{W}^{1,2}(\Omega)} {\rm d}t
&\leq C,
\label{est:803}
\\
\int_0^T \|\bar{r}_{n}(t)\|^2_{{W}^{1,2}(\Omega)} {\rm d}t
&\leq C,
\label{est:803b}
\\
\|\bar{r}_{n}\|_{L^{\infty}({\Omega_T})}
&\leq C.
\label{est:805}
\end{align}
Let $\ell$ be an odd integer.
Using $\zeta = c_{w} [\ell/(\ell+1)] (\bar{\vartheta}_{n})^{\ell+1}$
as a test function in \eqref{eq:1001}
and
$\psi = (\bar{\vartheta}_{n})^{\ell}$ in \eqref{eq:1003}
and combining both equations (subtracting \eqref{eq:1001}
from \eqref{eq:1003}) we have (with the notation \eqref{notation_form_b})
\begin{align}\label{app:eq_bound_theta_00}
&
\int_{\Omega}
[\bar{\vartheta}_{n}(t)]^{\ell+1}
\left[
c_{w}  b\left({x/\varepsilon},\bar{p}_{n}(t),\bar{r}_{n}(t)\right)
+
\sigma({x/\varepsilon},\bar{r}_{n}(t))
\right]
{\rm d}x
\nonumber
\\
&
+
\int_{\Omega_t}
\ell (\bar{\vartheta}_{n})^{\ell-1}
{\lambda}({x/\varepsilon},\bar{p}_{n},\bar{\vartheta}_{n},\bar{r}_{n})
|\nabla\bar{\vartheta}_{n}|^2
{\rm d}x{\rm d}s
\nonumber
\\
&
+
\int_{\partial\Omega_{t}}
\alpha_{e} (\bar{\vartheta}_{n})^{\ell+1}
\,
{\rm d}{S} {\rm d}t
+
c_{w}
\frac{1}{\ell+1}
\int_{\partial\Omega_{t}}
\beta_{e}
 (\bar{\vartheta}_{n})^{\ell+1}
(
\bar{p}_{n}
-
{p}_{\infty}
)
\,
{\rm d}{S} {\rm d}t
\nonumber
\\
\leq
&
\;
\int_{\Omega}
{\vartheta}_0^{\ell+1}
\left[
c_{w} b\left({x/\varepsilon},{p}_{0},{0}\right)
+
\sigma({x/\varepsilon},{0})
\right]
{\rm d}{x}
+
\int_{\partial\Omega_{t}}
\alpha_{e} \vartheta_{\infty} \;  (\bar{\vartheta}_{n})^{\ell}
\,
{\rm d}{S} {\rm d}t
\nonumber
\\
&
+
\int_{\Omega_t}
\left(
\alpha_{2} (\bar{\vartheta}_{n})^{\ell}
-
\alpha_{1} c_{w} [\ell/(\ell+1)] (\bar{\vartheta}_{n})^{\ell+1}
\right)
\chi_{c}({x/\varepsilon})
f(\bar{p}_{n},\bar{\vartheta}_{n},\bar{r}_{n})
{\rm d}x{\rm d}s.
\end{align}
Applying the Young's inequality we have
\begin{equation}
\int_{\partial\Omega_{t}}
\alpha_{e} \vartheta_{\infty} \;  (\bar{\vartheta}_{n})^{\ell}
\,
{\rm d}{S} {\rm d}t
\leq
\alpha_{e}
\frac{1}{\ell+1}
\int_{\partial\Omega_{t}}
(\vartheta_{\infty})^{\ell+1}
\,
{\rm d}{S} {\rm d}t
+
\alpha_{e}
\frac{\ell}{\ell+1}
\int_{\partial\Omega_{t}}
 (\bar{\vartheta}_{n})^{\ell+1}
\,
{\rm d}{S} {\rm d}t.
\end{equation}
Further,
using \eqref{con11a}, \eqref{bound_phi}, \eqref{assum:bound_f} and  \eqref{est:p_uniform_bound},
the inequality \eqref{app:eq_bound_theta_00} can be simplified (recall that $\ell$ is the odd integer)
\begin{align}\label{app:eq_bound_theta_00b}
&
c_1
\int_{\Omega}
[\bar{\vartheta}_{n}(t)]^{\ell+1}
{\rm d}x
\nonumber
\\
\leq
&
\;
\int_{\Omega}
[\bar{\vartheta}_{n}(t)]^{\ell+1}
\left[
c_{w}  b\left({x/\varepsilon},\bar{p}_{n}(t),\bar{r}_{n}(t)\right)
+
\sigma({x/\varepsilon},\bar{r}_{n}(t))
\right]
{\rm d}x
\nonumber
\\
&
+
\int_{\Omega_t}
\ell (\bar{\vartheta}_{n})^{\ell-1}
{\lambda}({x/\varepsilon},\bar{p}_{n},\bar{\vartheta}_{n},\bar{r}_{n})
|\nabla\bar{\vartheta}_{n}|^2
{\rm d}x{\rm d}s
\nonumber
\\
&
+
\frac{1}{\ell+1}
\int_{\partial\Omega_{t}}
\alpha_{e} (\bar{\vartheta}_{n})^{\ell+1}
\,
{\rm d}{S} {\rm d}t
\nonumber
\\
\leq
&
\;
\int_{\Omega}
{\vartheta}_0^{\ell+1}
\left[
c_{w} b\left({x/\varepsilon},{p}_{0},{0}\right)
+
\sigma({x/\varepsilon},{0})
\right]
{\rm d}{x}
+
\alpha_{e}
\frac{1}{\ell+1}
\int_{\partial\Omega_{t}}
(\vartheta_{\infty})^{\ell+1}
\,
{\rm d}{S} {\rm d}t
\nonumber
\\
&
+
\int_{\Omega_t}
\left[
\alpha_{2}
\left(
\frac{1}{1+\ell}
+
\frac{\ell}{1+\ell}
(\bar{\vartheta}_{n})^{\ell+1}
\right)
-
\alpha_{1} c_{w} [\ell/(\ell+1)] (\bar{\vartheta}_{n})^{\ell+1}
\right]
\chi_{c}({x/\varepsilon})
f(\bar{p}_{n},\bar{\vartheta}_{n},\bar{r}_{n})
{\rm d}x{\rm d}s
\nonumber
\\
\leq
&
\;
c_2
\|
{\vartheta}_0
\|_{L^{\ell+1}(\Omega)}^{\ell+1}
+
c_3
\frac{[\vartheta_{\infty}]^{\ell+1}}{\ell+1}
+
c_4
\frac{1}{\ell+1}
+
c_5
\int_{0}^{t}
\int_{\Omega}
[\bar{\vartheta}_{n}]^{\ell+1}
{\rm d}x{\rm d}s.
\end{align}
Finally,  the application of
Gronwall's inequality \cite[Chapter 1.6]{Roubicek2005}) yields
\begin{eqnarray}
\|  \bar{\vartheta}_{n}  \|_{L^{\infty}(0,T;L^{\ell+1}(\Omega))} \leq C,
\label{est:unform_bound_theta_01}
\\
\|  \bar{\vartheta}_{n}  \|_{L^{\ell+1}(0,T;L^{\ell+1}(\partial\Omega))} \leq C,
\label{est:unform_bound_theta_boundary_01}
\end{eqnarray}
where the constant $C$ is independent of $\ell$, $\tau$ and $\varepsilon$.
Now, let $\ell \rightarrow +\infty$ in \eqref{est:unform_bound_theta_01}, we get
\begin{equation}\label{est:unform_bound_theta_02}
\|  \bar{\vartheta}_{n}  \|_{L^{\infty}({\Omega_T})} \leq C,
\end{equation}
where the constant $C$ is independent of $\tau$ and $\varepsilon$.
Moreover, \eqref{est:p_uniform_bound} further implies
\begin{eqnarray}
\|  \bar{p}_{n}  \|_{L^{\infty}({\Omega_T})} &\leq& C,
\label{est:unform_bound_pressure_02a}
\\
\|  \bar{p}_{n}  \|_{L^{\infty}({\partial\Omega_T})} &\leq& C.
\label{est:unform_bound_pressure_02b}
\end{eqnarray}
Finally,
the following estimates can be obtained
in much the same way as \cite[eqs (109), (111) and (112)]{BenesPazanin2018}
\begin{align}
&
\int_0^{T-k{h}}
\left[
{S}(\bar{p}_{n}(t+k{h}))
-
{S}(\bar{p}_{n}(t))
\right]
\left(
\bar{p}_{n}(t+k{h})
-
\bar{p}_{n}(t)
\right)
{\rm d}t
\leq C k {h},
\label{est:806}
\\
&
\int_0^{T-k{h}}
|\bar{\vartheta}_{n}(t+k{h}) - \bar{\vartheta}_{n}(t)|^2
{\rm d}t
\leq {C} k {h},
\label{est:808}
\\
&
\int_0^{T-k{h}}
|\bar{r}_{n}(t+k{h}) - \bar{r}_{n}(t)|^2
{\rm d}t
\leq {C} k {h}.
\label{est:809}
\end{align}
Finally,
from \eqref{eq:1001} and \eqref{eq:1003}, using
\eqref{est:801},
\eqref{est:803},
\eqref{est:unform_bound_theta_02},
\eqref{con11a}--\eqref{con12d}, \eqref{bound_phi} and \eqref{assum:bound_f},
we get
\begin{equation}\label{}
\|
\varrho_{w}
\partial_t^{-{h}}
[
\left( \chi^{\varepsilon}_{c} \phi_{c}(\bar{r}_{n}(t))+\chi^{\varepsilon}_{a} \phi_{a}  \right)
\mathcal{S}(\bar{p}_{n}(t))
]
\|_{L^{2}(0,T;{{{{W}^{1,2}(\Omega)}'}})}  \leq C.
\end{equation}
and
\begin{equation}\label{x}
\|
\partial_t^{-{h}}
\left[
c_{w} \varrho_{w}
\left(
\chi^{\varepsilon}_{c} \phi_{c}(\bar{r}_{n}(t))+\chi^{\varepsilon}_{a} \phi_{a}
\right)
\mathcal{S}(\bar{p}_{n}(t))
\bar{\vartheta}_{n}(t)
+
\sigma^{\varepsilon}({x},\bar{r}_{n}(t)) \bar{\vartheta}_{n}(t)
\right]
\|_{L^{2}(0,T;{{{{W}^{1,2}(\Omega)}'}})}  \leq C.
\end{equation}

\subsection{Passage to the limit}
\label{subsec:limit}
The a-priori estimates
\eqref{est:801}--\eqref{est:805},
\eqref{est:unform_bound_theta_02} and \eqref{est:806}--\eqref{est:809}
allow us to conclude that there exist
\begin{align*}
&
{p} \in L^2(0,T;{W}^{1,2}(\Omega)) \cap L^{\infty}({\Omega_T}),
\\
&
{\vartheta} \in L^2(0,T;{W}^{1,2}(\Omega)) \cap L^{\infty}({\Omega_T}),
\\
&
{r} \in L^2(0,T;{W}^{1,2}(\Omega)) \cap L^{\infty}({\Omega_T}),
\end{align*}
such that,
letting $n \rightarrow +\infty$ (along a selected subsequence),
\begin{align*}
\bar{p}_{n} & \rightharpoonup  {p}
&&
\textrm{weakly in } L^2(0,T;{W}^{1,2}(\Omega)),
\\
\bar{\vartheta}_{n} & \rightharpoonup  \vartheta
&&
\textrm{weakly in } L^2(0,T;{W}^{1,2}(\Omega)),
\\
\bar{p}_{n}  & \rightharpoonup  {p}
&&
\textrm{weakly star in } L^{\infty}({\Omega_{T}}),
\\
\bar{p}_{n}  & \rightharpoonup  {p}
&&
\textrm{weakly star in } L^{\infty}({\partial\Omega_{T}}),
\\
\bar{\vartheta}_{n}  & \rightharpoonup  \vartheta
&&
\textrm{weakly star in } L^{\infty}({\Omega_{T}}),
\\
\bar{r}_{n} & \rightharpoonup  {r}
&&
\textrm{weakly in } L^2(0,T;W^{1,2}(\Omega)),
\\
\bar{r}_{n}  & \rightharpoonup  {r}
&&
\textrm{weakly star in } L^{\infty}({\Omega_T}),
\\
\bar{p}_{n} & \rightarrow  p
&&
\textrm{almost everywhere on } \Omega_T,
\\
\bar{\vartheta}_{n} & \rightarrow \vartheta
&&
\textrm{almost everywhere on } \Omega_T,
\\
\bar{r}_{n} & \rightarrow {r}
&&
\textrm{almost everywhere on } \Omega_T,
\\
\bar{\vartheta}_{n} & \rightharpoonup \vartheta
&&
\textrm{weakly in } L^{\ell}(\partial\Omega_{T}) \; (\forall  1 \leq  \ell < +\infty).
\end{align*}
Finally, \cite[Lemma 3]{FiloKacur1995} yields
\begin{eqnarray*}
\bar{\vartheta}_{n}  &\rightarrow& \vartheta
\qquad
\textrm{almost everywhere on } \partial\Omega_{T},
\label{conv_501}
\\
\bar{p}_{n}  &\rightarrow& {p}
\qquad
\textrm{almost everywhere on } \partial\Omega_{T}.
\label{conv_502}
\end{eqnarray*}
The above established convergences
are sufficient for taking the limit $n \rightarrow \infty$
in \eqref{eq:1001}--\eqref{eq:1004a}
(along a selected subsequence) to get the weak solution
of the system  \eqref{eq1a}--\eqref{eq1g}
in the sense of Definition~\ref{def:weak_form}.
\hfill $\square$

\subsection*{Acknowledgment}
The first author of this work has been supported by the project GA\v{C}R~16-20008S.


\end{document}